\newtheorem{theorem}{Theorem}[section]
\newtheorem{lemma}[theorem]{Lemma}
\newtheorem{proposition}[theorem]{Proposition}
\theoremstyle{definition}
\newtheorem{definition}[theorem]{Definition}}
\theoremstyle{remark}
\newtheorem{remark}[theorem]{Remark}}
\numberwithin{equation}{section}
\def\i{\mathsf{i}}
\def\rr{\mathsf{r}}
\def\cc{\mathsf{c}}
\def\tt{\tau_1}
\def\Re{ \mathsf{Re}}
\def\Im{ \mathsf{Im}}
\def\Ehf {\mathbb{E}^{\operatorname{h-fl}}}
\def\Phf {\mathbb{P}^{\operatorname{h-fl}}}
\title{One-point asymptotics for half-flat ASEP}
\date{\today}
\author{Evgeni Dimitrov and Anushka Murthy}
\begin{document}

\maketitle

\begin{abstract} We consider the asymmetric simple exclusion process (ASEP) with half-flat initial condition. We show that the one-point marginals of the ASEP height function are described by those of the $\mbox{Airy}_{2 \rightarrow 1}$ process, introduced by Borodin-Ferrari-Sasamoto in (Commun. Pure Appl. Math., 61, 1603-1629, 2008). This result was conjectured by Ortmann-Quastel-Remenik (Ann. Appl. Probab., 26, 507-548), based on an informal asymptotic analysis of exact formulas for generating functions of the half-flat ASEP height function at one spatial point. Our present work provides a fully rigorous derivation and asymptotic analysis of the same generating functions, under certain parameter restrictions of the model.
\end{abstract}

\tableofcontents

%
\section{Introduction and main results}\label{Section1}

%
\subsection{Preface}\label{Section1.1} The {\em asymmetric simple exclusion process} (ASEP) is a continuous time Markov process, introduced to the mathematical community by Spitzer \cite{Spitzer70} in 1970 (and also appearing two years earlier in the biology work of MacDonald, Gibbs and Pipkin \cite{MGP68}). 

The state space of the process is $\{0,1\}^{\mathbb{Z}}$ and we interpret an element $\eta \in \{0,1\}^{\mathbb{Z}}$ as a particle configuration, where $\eta(x) = 1$ indicates the presence of a particle at location $x$, and $\eta(x) = 0$ indicates the presence of a hole at location $x$. The dynamics of the model depend on a parameter $p \in [0,1]$, and can be described as follows. Each particle carries an independent exponential clock, which rings at rate $1$. When the clock rings the particle attempts to jump one site to the right with probability $p$, and with probability $q = 1-p$ it attempts to jump one site to the left. The jump is successful if the site to which the particle attempts to jump is a hole; otherwise, the jump is suppressed and the particle stays put. As mentioned in \cite{Spitzer70}, there is some delicacy in formally constructing a Markov process corresponding to the latter dynamics when the number of particles is infinite; however, a number of papers have been published confirming the existence of such a Markov process -- see the works of Harris \cite{Harris72}, Holley \cite{Holley70}, and Liggett \cite{Liggett72}. We denote this process by $\{ \eta_t: t \geq 0 \}$. If $q = 1, p = 0$ (or $q= 0, p = 1$) the process $\{ \eta_t: t \geq 0 \}$ is called the {\em totally} asymmetric simple exclusion process (TASEP), if $q = p = 1/2$ it is called the {\em symmetric} simple exclusion process (SSEP), and if $ p, q > 0$ and $p \neq q$ it is called the (partially) asymmetric simple exclusion process (ASEP). Throughout the paper we will assume that $q > p > 0$, so that the particles have a drift to the left.

Given $\{ \eta_t: t \geq 0 \}$, we define $\{ \hat{\eta}_t: t \geq 0 \}$ through $\hat{\eta}_t(x) = 2 \eta_t(x) - 1$ so that $\hat{\eta}_t \in \{-1, 1\}^{\mathbb{Z}}$, and then we define the {\em height function} of ASEP to be
\begin{equation}\label{HGen}
h(t,x) = \begin{cases} 2N_0^{\operatorname{flux}}(t) + \sum_{y = 1}^x \hat{\eta}_t(y)  &\mbox{ if } x\geq 1 \\ 2 N_0^{\operatorname{flux}}(t), &\mbox{ if } x = 0 \\ 2N_0^{\operatorname{flux}}(t) - \sum_{y = x + 1}^0 \hat{\eta}_t(y) &\mbox{ if } x \leq -1,\end{cases}
\end{equation}
where $N_0^{\operatorname{flux}}(t)$ is the net number of particles that crossed from site $1$ to $0$ up to time $t$, i.e. particles that jump from $1$ to $0$ are counted as $+1$ and those that jumped from $0$ to $1$ are counted as $-1$. 

ASEP is an important member of the one-dimensional Kardar-Parisi-Zhang (KPZ) universality class. For more on the KPZ universality class we refer to the surveys and books \cite{CU2,HT,QS} and the references therein. In particular, it is expected that the scaled ASEP height function $\epsilon^{1/2} h(\epsilon^{-3/2} T,  \epsilon^{-1}x)$, appropriately shifted, converges as $\epsilon \rightarrow 0+$ to the fixed time $T$ distribution of a certain Markov process known as the {\em KPZ fixed point}, started from an initial state that depends on the initial condition for ASEP. We mention that the KPZ fixed point was constructed by Matetski-Quastel-Remenik \cite{MQR}, and the convergence of the ASEP height function to the KPZ fixed point was proved for a class of initial conditions by Quastel-Sarkar \cite{QuaSar}.\\

Despite the remarkable progress in understanding the asymptotic behavior of ASEP, the results in \cite{QuaSar} only apply when the initial condition asymptotically is either continuous with moderate growth, or consists of multiple narrow wedges. There are still many natural and important initial conditions that are not handled by \cite{QuaSar}, and for which precise statements are not available. One such example is the case of {\em half-flat} initial condition, which corresponds to starting ASEP from $\eta^{\operatorname{h-fl}}_0 = {\bf 1}\{ x \in 2\mathbb{N}\}$ , where $\mathbb{N} = \{1, 2,3, \dots \}$. The half-flat initial condition asymptotically becomes the function that is $0$ for positive and $-\infty$ for negative values, so that it is a natural mixture of the two classes of initial conditions handled in \cite{QuaSar}, without belonging to either. 

The goal of the present paper is to investigate the large time limit of the ASEP height function $h(t,x)$, started from $\eta^{\operatorname{h-fl}}_0$, and show that its one-point marginals are asymptotically described by those of the Airy$_{2 \rightarrow 1}$ process, introduced by Borodin-Ferrari-Sasamoto \cite{BFS08}. This convergence result was conjectured by Ortmann-Quastel-Remenik \cite{OQR}, based on an informal analysis of exact formulas for generating functions of the half-flat ASEP height function at one spatial point. Our present work provides a fully rigorous derivation and asymptotic analysis of the same generating functions, under certain parameter restrictions of the model.

The rest of the introduction is structured as follows. In Section \ref{Section1.2} we introduce some relevant notation and state a certain moment formula derived in \cite{OQR} -- this is Proposition \ref{S1Prop1}. Within the same section we state the exact formulas for the generating functions of the half-flat ASEP height function as Theorem \ref{S1Thm1}. We mention that Theorem \ref{S1Thm1} already appeared in \cite{OQR}, but as explained in Remarks \ref{Issue2} and \ref{CompOQR}, there are aspects of the derivation of that theorem that are not justified in \cite{OQR}. In fact, we can only prove the well-posedness of our formulas in Theorem \ref{S1Thm1} when $p$ is assumed to be sufficiently close to zero. In Section \ref{Section1.3} we introduce the $\mbox{Airy}_{2 \rightarrow 1}$ process, and in Section \ref{Section1.4} we present our main asymptotic result.

%
\subsection{Exact formulas for half-flat ASEP}\label{Section1.2} We begin by summarizing some of the notation we require for ASEP in the following definition.
\begin{definition}\label{DefASEP} We fix $p \in (0, 1/2)$ and let $q = 1-p$, $\tau = p/q$, $\gamma = q- p$, so that $p,q,\tau,\gamma \in (0,1)$. Let $\{ \eta_t: t \geq 0 \}$ denote the ASEP started from half-flat initial condition $\eta^{\operatorname{h-fl}}_0 = {\bf 1}\{ x \in 2\mathbb{N}\}$ for the parameters $p,q$ as in Section \ref{Section1.1}. We denote the distribution of $\{ \eta_t: t \geq 0 \}$ from this initial condition by $\Phf$ and write $\Ehf$ for the expectation with respect to this measure. For $x \in \mathbb{Z}$ and $t > 0$ we define
\begin{equation}\label{DefN}
N_x(t) = \sum_{y = -\infty}^x \eta_t(y),
\end{equation}
to be the total number of particles at or to the left of location $x$ at time $t$, and note that from (\ref{HGen}) we have the following relationship between $N_x(t)$ and the height function
\begin{equation}\label{DefH}
h(t,x) = 2N_x(t) - x.
\end{equation}
\end{definition}
In the remainder of this section we present the exact formulas for certain observables involving $N_x(t)$ that were derived in \cite{OQR}. We mention that in view of (\ref{DefH}) all of the observables below can alternatively be expressed in terms of $h(t,x)$; however, we will formulate them with $N_x(t)$, following \cite{OQR}. Both the observables and the formulas for them depend on various special functions, which we present next.

For $q \in [0,1)$ and $a \in \mathbb{C}$ we let $(a;q)_{\infty}$ be the $q$-Pochhammer symbol
\begin{equation}\label{S1Poch}
(a;q)_{\infty} = \prod_{n = 0}^\infty (1 - a q^n).
\end{equation}
We also define the $q$-factorial for $k \in \mathbb{Z}_{\geq 0}$
\begin{equation}\label{S1QFac}
k_q! = \frac{\prod_{a = 1}^k (1 - q^a)}{(1- q)^k},
\end{equation}
and the $q$-exponential function
\begin{equation}\label{S1QExp}
e_q(x) = \frac{1}{((1-q)x;q)_{\infty}} = \sum_{k = 0}^{\infty} \frac{x^k}{k_q!},
\end{equation}
where the first identity is the definition of $e_q(x)$ and is valid for $x \neq (1-q)^{-1} q^{-m}$ for $m \in \mathbb{Z}_{\geq 0}$ and the second holds when $|x| < 1$ -- see \cite[Corollary 10.2.2a]{Andrews}.

For the next several functions we fix $p,q,\tau$ as in Definition \ref{DefASEP}, $x \in \mathbb{Z}$, and $t \geq 0$. We define
\begin{equation}\label{S1BasicFun}
\begin{split}
\mathfrak{f}(w; n) =& \hspace{2mm} (1- \tau)^n \exp \left( \frac{(q-p)t}{1 + w} - \frac{(q-p)t}{1 + \tau^n w}  \right) \cdot \left( \frac{1 + \tau^n w}{1 + w} \right)^{x-1}, \\
\mathfrak{g}(w; n) = & \hspace{2mm} \frac{(-w;\tau)_{\infty} (\tau^{2n}w^2;\tau)_{\infty}}{(-\tau^nw;\tau)_{\infty} (\tau^nw^2;\tau)_{\infty}},\\
\mathfrak{h}(w_1,w_2; n_1, n_2) = &\hspace{2mm} \frac{(w_1w_2;\tau)_{\infty} (\tau^{n_1 + n_2} w_1 w_2;\tau)_{\infty}}{(\tau^{n_1} w_1w_2 ;\tau)_{\infty} (\tau^{n_2} w_1 w_2;\tau)_{\infty}}
\end{split}
\end{equation}
Throughout the paper we denote by $\vec{w}$ the vector $\vec{w} = (w_1, \dots, w_k)$ and by $d\vec{w}$ the complex integration form $dw_1 \cdots dw_k$. We mention that the dimension $k$ and the contours over which the integration is performed will be clear from the context. 

For complex vectors $\vec{w}, \vec{s} \in \mathbb{C}^k$ and $\zeta \in \mathbb{C} \setminus [0, \infty)$ we write
\begin{equation}\label{S1SpecFun}
\begin{split}
&F(\vec{s}, \vec{w}) =  \det \left[ \frac{-1}{w_a \tau^{s_a} - w_b}  \right]_{a,b = 1}^k  \prod_{a = 1}^k  \mathfrak{f}(w_a;s_a) \mathfrak{g}(w_a; s_a) \cdot \prod_{1 \leq a < b \leq k} \mathfrak{h}(w_a, w_b; s_a, s_b) ,\\
&F(\zeta; \vec{s}, \vec{w})  = \prod_{a = 1}^k \frac{\pi (-\zeta)^{s_a}}{\sin(-\pi s_a)}  \cdot F(\vec{s}, \vec{w}).
\end{split}
\end{equation}
In equation (\ref{S1SpecFun}) and throughout the paper we will always take the principal branch of the logarithm.\\

With the above notation in place, we can state the exact formulas we use.
\begin{proposition}\label{S1Prop1}\cite[Theorem 1.3]{OQR}. Assume the same notation as in Definition \ref{DefASEP}. For any $m \in \mathbb{Z}_{\geq 0}$ we have
\begin{equation}\label{S1MomentFormula}
\Ehf \left[ \tau^{m N_x(t)}\right] = m_{\tau}! \sum_{k = 0}^m \nu^{\operatorname{h-hl}}_{k,m}(t,x),
\end{equation}
where $\gamma_{-1,0}$ is the positively oriented circle of radius $\tau^{-1/8}$, centered at the origin and
\begin{equation}\label{S1DefNu}
\begin{split}
\nu^{\operatorname{h-hl}}_{k,m}(t,x) = \hspace{2mm} & \frac{1}{k!} \sum_{\substack{ n_1, \dots, n_k \in \mathbb{N} \\ n_1 + \cdots + n_k = m}}\frac{1}{(2\pi \i)^{k}} \oint_{\gamma_{-1,0}^k}d\vec{w} F(\vec{n}, \vec{w}).
\end{split}
\end{equation}
\end{proposition}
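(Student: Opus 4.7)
The strategy will combine the ASEP self-duality for the observable $\tau^{N_x(t)}$, Tracy--Widom's coordinate Bethe ansatz for the dual $m$-particle transition density, an explicit summation against the half-flat initial configuration, and a contour-shift argument that reorganizes a nested contour integral into the claimed composition sum.

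As a first step, I would invoke the Borodin--Corwin--Sasamoto self-duality of ASEP: for any initial profile $\eta$ this identifies $\mathbb{E}^\eta[\prod_{i=1}^m \tau^{N_{x_i}(t)}]$ with the expectation of $\prod_{i=1}^m \tau^{N_{x_i(t)}(\eta)}$ over an auxiliary $m$-particle ASEP-type system started from a weakly ordered configuration $x_1 \leq \cdots \leq x_m$, where $N_{x_i(t)}(\eta) = \sum_{y\leq x_i(t)}\eta(y)$ is evaluated on the initial data. Setting all $x_i = x$ by a standard coinciding-point limit and applying the Tracy--Widom Bethe ansatz solution expresses the corresponding transition density as a sum over permutations in $S_m$ of scattering amplitudes; symmetrization produces a nested $m$-fold contour integral involving the Cauchy factor $\prod_{a<b}(w_a - w_b)/(w_a - \tau w_b)$ and the free one-particle evolution factor $\mathfrak{f}(w_a;1)$.

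Next I would carry out the sum over $\eta^{\operatorname{h-fl}}_0$. Because this initial datum is the indicator of $2\mathbb{N}$, the sum $\sum \prod_i \tau^{N_{x_i(t)}(\eta^{\operatorname{h-fl}}_0)}$ collapses into a product of geometric-like series in the Bethe variables $w_a$. The relevant $q$-series identities (Jacobi triple product and Cauchy-type identities, which are also what underlies the Hall--Littlewood Cauchy formula) convert these geometric sums into products of $q$-Pochhammer symbols, and these reassemble into the self-interaction factor $\mathfrak{g}(w_a;1)$ and the pairwise factor $\mathfrak{h}(w_a, w_b; 1, 1)$ appearing in $F(\vec{s},\vec{w})$ of (\ref{S1SpecFun}).

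The final and most delicate step is a Borodin--Corwin contour shift. Starting from nested contours $\Gamma_1 \supset \tau\Gamma_1 \supset \cdots \supset \tau^{m-1}\Gamma_1$ enclosing the origin, I would deform all of them to the common contour $\gamma_{-1,0}$, collecting residues at each crossing of the poles $w_b = \tau w_a$ contributed by the Cauchy factor. These residues organize the $m$ variables into \emph{strings} of lengths $n_1, \dots, n_k$ with $n_1 + \cdots + n_k = m$, on each of which the Bethe variables collapse to a geometric progression $w, \tau w, \dots, \tau^{n_j - 1} w$. Explicit calculation shows that a string of length $n$ contributes exactly $\mathfrak{f}(w;n)\mathfrak{g}(w;n)$, that cross-terms between distinct strings produce $\mathfrak{h}(w_a, w_b; n_a, n_b)$, and that the Cauchy determinant identity repackages the residual factors as $\det[-1/(w_a \tau^{s_a} - w_b)]$. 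The combinatorial factors $1/k!$ (from unordered strings) and $m_\tau!$ (from the number of orderings of original variables realizing a given composition) then match those in (\ref{S1MomentFormula}). The main obstacle is verifying that the contour deformation does not cross undesirable singularities of $\mathfrak{f}, \mathfrak{g}, \mathfrak{h}$ and that the string-residue combinatorics precisely produces the stated composition sum; the restriction $p \in (0, 1/2)$ (and hence $\tau \in (0,1)$) is what secures the convergence of the $q$-Pochhammer products and the analyticity needed on $\gamma_{-1, 0}$.
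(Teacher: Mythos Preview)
Your outline is correct and matches the approach the paper attributes to \cite{OQR} (see the remark following the proposition): duality to a finite-particle system, a Bethe-ansatz contour-integral solution for the resulting observables, and then the Borodin--Corwin string decomposition obtained by deforming nested contours to a common one. The only cosmetic difference is that \cite{OQR} works with the $\tilde{Q}_x(t) = (\tau^{N_x(t)} - \tau^{N_{x-1}(t)})/(\tau-1)$ observables, characterizes their joint moments as the unique solution to a system of differential equations from \cite{BCS14}, and then uses \cite[Lemma 4.18]{BCS14} to reassemble $\tau^{mN_x(t)}$ from the $\tilde{Q}$'s, whereas you describe the same mechanism directly in terms of self-duality and the $m$-particle transition density; these are two phrasings of the same computation.
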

\begin{remark} \cite[Theorem 1.3]{OQR} is formulated for general contours $\gamma_{-1,0}$, and as explained in that theorem $\gamma_{-1,0}$ needs to be positively oriented, enclose $0$, $-1$, and exclude all other poles of $F(\vec{n}, \vec{w})$. Our choice for $\gamma_{-1,0}$  to be the positively oriented circle of radius $\tau^{-1/8}$, centered at the origin, clearly satisfies all of these conditions. 
\end{remark}
\begin{remark} Let us briefly explain how Proposition \ref{S1Prop1} is proved in \cite{OQR}. If one defines 
$$\tilde{Q}_{x}(t) = \frac{\tau^{N_x(t)} - \tau^{N_{x-1}(t)}}{\tau -1},$$
it was shown in \cite{BCS14} that $\Ehf \left[ \tilde{Q}_{x_1}(t) \cdots \tilde{Q}_{x_k}(t) \right]$ is the unique solution to a certain system of differential equations. In \cite{OQR} the authors discovered a certain $k$-fold contour integral that solved this system, and thus provided formulas for the observables $\Ehf \left[ \tilde{Q}_{x_1}(t) \cdots \tilde{Q}_{x_k}(t) \right]$. These formulas can be found in \cite[Theorem 1.2]{OQR}, and the derivation is done in Section 2 of that paper. From \cite[Lemma 4.18]{BCS14} there is a way to express $\tau^{m N_x(t)}$ as a linear combination over $\tilde{Q}_{x_1}(t) \cdots \tilde{Q}_{x_k}(t)$ with $0 \leq k \leq m$ and $x_1 < \cdots < x_k \leq x$, which allows one to express $\Ehf \left[ \tau^{m N_x(t)}\right]$ as a certain $m$-fold contour integral over {\em different} contours. This is done in \cite[Proposition 3.2]{OQR}. Deforming all of these contours to the {\em same} one, using a nested contour integral ansatz that is similar to \cite[Proposition 3.8]{BorCor} and whose proof is inspired by \cite{HO97}, one arrives at (\ref{S1MomentFormula}).
\end{remark}

Using the formulas for the moments of $\tau^{N_x(t)}$, one can obtain a formula for the generating series of the {\em $\tau$-Laplace transform} of $\tau^{N_x(t)}$. This formula is given in the following theorem, and forms the basis of our asymptotic analysis.
\begin{theorem}\label{S1Thm1} Assume the same notation as in Definition \ref{DefASEP}, and let $\zeta \in \mathbb{C} \setminus [0, \infty)$. We further suppose that $\tau \in (0,1)$ is sufficiently small, so that 
\begin{equation}\label{S1TauSmall}
\frac{(\tau^{1/2} + \tau^{1/4})}{(1 - \tau^{1/2})^2} \cdot \frac{(-\tau^{3/4};\tau)_{\infty} (-\tau^{3/4};\tau)_{\infty}}{(\tau^{1/4}  ;\tau)_{\infty} (\tau^{1/4};\tau)_{\infty}}  < 1.
\end{equation} Then, for $e_{\tau}$ as in (\ref{S1QExp}) we have 
\begin{equation}\label{S1QLT}
\begin{split}
\Ehf \left[ e_{\tau} \left( \zeta \tau^{N_x(t)} \right) \right] = \hspace{2mm} &1 + \sum_{k = 1}^{\infty}H_{k}(\zeta), \mbox{ where }
\end{split} 
\end{equation} 
\begin{equation}\label{S1KthSum}
\begin{split}
&H_{k}(\zeta) =  \frac{1}{k! (2\pi \i)^{2k}} \int_{(1/2 + \i \mathbb{R})^k} d\vec{s} \oint_{\gamma_{-1,0}^k} d\vec{w} F(\zeta; \vec{s}, \vec{w}),
\end{split}
\end{equation}
$\gamma_{-1,0}$ is as in Proposition \ref{S1Prop1}, and $F(\zeta; \vec{s}, \vec{w})$ is as in (\ref{S1SpecFun}).
\end{theorem}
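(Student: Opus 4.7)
The plan is to establish (\ref{S1QLT}) first for $|\zeta|$ small via a power-series expansion of $e_\tau$ and the moment formula of Proposition \ref{S1Prop1}, then extend the identity to $\zeta \in \mathbb{C} \setminus [0,\infty)$ by analytic continuation. The essential tool that converts a sum over positive integers into a contour integral on $1/2 + \i \mathbb{R}$ is the Mellin--Barnes identity
\begin{equation*}
\sum_{n=1}^{\infty} \zeta^n f(n) = \frac{1}{2\pi \i} \int_{1/2 + \i \mathbb{R}} \frac{\pi (-\zeta)^s}{\sin(-\pi s)} f(s)\, ds,
\end{equation*}
which one proves by closing the contour to the right and collecting the simple poles of $1/\sin(-\pi s)$ at $s = 1, 2, \ldots$ (each with residue $-\zeta^n f(n)$ when multiplied by $(-\zeta)^s f(s)$), valid whenever $f$ extends analytically to the right half-plane with suitable decay and $|\zeta|$ is small.

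To execute the first step, I would fix $|\zeta| < 1$ and expand $e_\tau(\zeta \tau^{N_x(t)})$ using the series in (\ref{S1QExp}); the uniform bound $|\zeta \tau^{N_x(t)}|^m / m_\tau! \leq |\zeta|^m / m_\tau!$ and dominated convergence justify exchanging expectation and sum, yielding $\Ehf[e_\tau(\zeta \tau^{N_x(t)})] = \sum_m (\zeta^m/m_\tau!) \Ehf[\tau^{m N_x(t)}]$. Substituting Proposition \ref{S1Prop1} cancels the $m_\tau!$; after swapping the $m$- and $k$-sums and commuting the $\vec w$-integral with the sum over compositions, one arrives at
\begin{equation*}
1 + \sum_{k=1}^\infty \frac{1}{k!\,(2\pi \i)^k} \oint_{\gamma_{-1,0}^k} \Bigl( \sum_{n_1, \ldots, n_k \geq 1} \zeta^{n_1 + \cdots + n_k} F(\vec n, \vec w) \Bigr) d\vec w.
\end{equation*}
Since every $n_a$-dependence in $F(\vec n, \vec w)$ enters through $\tau^{n_a}$, the extension to complex $\vec s$ prescribed in (\ref{S1SpecFun}) is immediate, so applying the Mellin--Barnes identity once per coordinate and using Fubini to reorder the $\vec s$- and $\vec w$-integrations yields $1 + \sum_k H_k(\zeta)$ and proves (\ref{S1QLT}) for $|\zeta| < 1$. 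Finally, both sides are analytic on $\mathbb{C} \setminus [0,\infty)$ --- the left-hand side because $e_\tau(z) = 1/((1-\tau)z;\tau)_\infty$ has its only singularities on $[0,\infty)$ and $\tau^{N_x(t)} \in [0,1]$, and the right-hand side because each $H_k(\zeta)$ is analytic there and (\ref{S1TauSmall}) produces a bound of shape $C^k/k!$ that is locally uniform in $\zeta$ --- so analytic continuation completes the proof.

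The main obstacle is the quantitative control of $F(\vec s, \vec w)$ on the contours $\gamma_{-1,0}^k$ and $(1/2 + \i \mathbb{R})^k$: each interchange above (sum versus expectation, sum versus $\vec w$-integral, the residue computation behind the Mellin--Barnes shift, Fubini, and absolute convergence of $\sum_k H_k(\zeta)$) rests on bounds for the $q$-Pochhammer ratios inside $\mathfrak g$ and $\mathfrak h$, the geometric factor $(1-\tau)^n$ in $\mathfrak f$, the exponential involving $(q-p)t/(1+w)$, and the determinant $\det[-1/(w_a \tau^{s_a} - w_b)]$, whose denominators can become small when $w_a \tau^{s_a}$ approaches $w_b$. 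I expect the precise hypothesis (\ref{S1TauSmall}) to arise exactly as the output of bounding a ``one-site'' quantity, essentially $\sum_{n \geq 1} |\mathfrak f(w;n) \mathfrak g(w;n)|$ together with a contribution from the diagonal of the determinant, on $|w| = \tau^{-1/8}$; the requirement that this quantity be strictly less than one is what makes the $k$-fold integrals scale like $C^k/k!$ and thus renders $\sum_k H_k(\zeta)$ summable. The Mellin--Barnes step is the most delicate piece, since closing the $s_a$-contour to the right requires $F(\vec s, \vec w)$ to decay both as $\Re(s_a) \to +\infty$ and along vertical lines, uniformly in $\vec w$; the vertical decay is supplied by $1/\sin(-\pi s_a)$, while the horizontal decay comes from $(1-\tau)^{s_a}$ in $\mathfrak f$ and from the asymptotics of the $q$-Pochhammer ratios as $\tau^{s_a} \to 0$, but pushing these estimates to be uniform in $\vec w$ on the torus is what I expect to form the bulk of the technical work.
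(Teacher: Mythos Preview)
Your overall roadmap matches the paper's proof: establish (\ref{S1QLT}) for $|\zeta|<1$ via the $e_\tau$ series, Proposition~\ref{S1Prop1}, and a Mellin--Barnes substitution, then analytically continue. The paper likewise packages these into two lemmas (Lemma~\ref{S2AnalyticFD} for analyticity and summability, Lemma~\ref{S2MellinBarnes} for the Mellin--Barnes step), and the horizontal decay needed to close the $s_a$-contour indeed comes from $|\zeta|<1$ through the factor $(-\zeta)^{s_a}$.

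However, your diagnosis of where (\ref{S1TauSmall}) enters is wrong, and this is precisely the central difficulty of the proof. You describe it as a ``one-site'' quantity built from $\sum_{n\geq 1}|\mathfrak f(w;n)\mathfrak g(w;n)|$ plus the diagonal of the determinant, yielding bounds ``of shape $C^k/k!$.'' In fact the obstruction is the \emph{cross term} $\prod_{a<b}\mathfrak h(w_a,w_b;s_a,s_b)$, whose best pointwise bound on $\gamma_{-1,0}^k$ is of order $e^{\tilde c \binom{k}{2}}$; the $1/k!$ in the definition of $H_k$ cannot defeat this, as the paper stresses in Remark~\ref{CompOQR}. What rescues the argument is an off-diagonal Cauchy-determinant estimate (Lemma~\ref{DetBounds}(ii)):
\[
\Bigl|\det\bigl[-1/(w_a\tau^{s_a}-w_b)\bigr]\Bigr|
\;\le\; \tau^{k/8}\,\frac{k^k\,\tau^{\frac12\binom{k}{2}}}{(1-\tau^{1/2})^{k^2}}.
\]
Combining this with the $\mathfrak h$-bound and absorbing $k^k/k!\le e^k$ produces the estimate $|H_k(\zeta)|\le (\mathrm{const})^k\rho^{\binom{k}{2}}$, where $\rho$ is exactly the left side of (\ref{S1TauSmall}). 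Summability therefore comes from $\rho<1$, not from $1/k!$; the quantity in (\ref{S1TauSmall}) is the product of one $\mathfrak h$-bound with the per-pair decay extracted from the Cauchy determinant, not a single-site object. Without this observation the interchanges you list cannot be justified, so this is the piece your proposal is missing.
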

\begin{remark}\label{RemMT1} Part of the statement of the theorem is that the integral in (\ref{S1KthSum}) is well-defined and finite, and the sum on the right side of (\ref{S1QLT}) is absolutely convergent. We establish these statements in Lemma \ref{S2AnalyticFD}.
\end{remark}
\begin{remark}\label{Issue1} The way that the formula in (\ref{S1QLT}) is derived is by first showing that for $|\zeta| < 1$ 
\begin{equation}\label{AQW1}
\Ehf \left[ e_{\tau} \left( \zeta \tau^{N_x(t)} \right) \right]  = \lim_{N \rightarrow \infty} \sum_{m =0}^N \frac{1}{m_{\tau}!} \cdot \Ehf \left[ \tau^{m N_x(t)}\right],
\end{equation}
which is a direct consequence of (\ref{S1QExp}). Afterwards, we use the formulas for the moments $\Ehf \left[ \tau^{m N_x(t)}\right] $ from (\ref{S1MomentFormula}), take the limit $N \rightarrow \infty$ and rearrange the resulting sum. In this way one obtains 
\begin{equation}\label{AQW2}
\Ehf \left[ e_{\tau} \left( \zeta \tau^{N_x(t)} \right) \right]  =1 + \sum_{k = 1}^{\infty} \sum_{n_1 = 1}^{\infty} \cdots \sum_{ n_k = 1}^{\infty}  \frac{\zeta^{n_1 + \cdots + n_k}}{k! (2\pi \i)^k} \oint_{\gamma_{-1,0}^k}d\vec{w} F(\vec{n}, \vec{w}).
\end{equation}
One can express the $k$-fold sums over $n_1, \dots, n_k$ as $k$-fold contour integrals using a Mellin-Barnes type integral representation from \cite[Lemma 3.20]{BorCor}. This proves (\ref{S1QLT}) for $|\zeta| < 1$, and then one shows that the equality holds for all $\zeta \in \mathbb{C} \setminus [0, \infty)$ by analytic continuation. We mention that in taking the $N \rightarrow \infty$ limit, as well as in applying the analytic continuation argument, we require absolute summability of the series in (\ref{AQW2}), which we can ensure only when $\tau$ is sufficiently small. This is the origin of the assumption (\ref{S1TauSmall}). For general $\tau \in (0,1)$ the integrals in (\ref{S1KthSum}) are still well-defined; however, we do not know how to show that the series in (\ref{S1QLT}) is convergent. The proof of Theorem \ref{S1Thm1} is the content of Section \ref{Section2}.
\end{remark}
\begin{remark}\label{Issue2} Theorem \ref{S1Thm1} appears as Theorem 1.4 in \cite{OQR}, although there is a missing term $\prod_{a = 1}^k \frac{\pi}{\sin(-\pi s_a)}$ in the integrand in (\ref{S1KthSum}) and the result is formulated without the assumption (\ref{S1TauSmall}). The authors derive their result following the outline we gave in Remark \ref{Issue1} and it is when the authors apply this Mellin-Barnes integral representation that the $\prod_{a = 1}^k \frac{\pi}{\sin(-\pi s_a)}$ term is dropped in \cite{OQR}, see \cite[Equation (4.13)]{OQR}. This is a small typo in the paper. A more serious problem, which we discuss in Remark \ref{CompOQR}, is that the authors  do not provide sufficient justification to demonstrate that the series in (\ref{AQW2}) is absolutely convergent for $|\zeta|<1$, and that one can analytically extend the equality in (\ref{S1QLT}) to $\zeta \in \mathbb{C} \setminus [0, \infty)$. In fact, it is not even clear from the arguments in \cite{OQR} that the right side of (\ref{S1QLT}) is convergent and hence well-defined.
\end{remark}

%
\subsection{The $\mbox{Airy}_{2 \rightarrow 1}$ process}\label{Section1.3} In this section we define the $\mbox{Airy}_{2 \rightarrow 1}$ process, which is the object that describes the large time limit of the height function $h$ from (\ref{DefH}) under the measure $\Phf$ from Definition \ref{DefASEP}. The $\mbox{Airy}_{2 \rightarrow 1}$ process was introduced by Borodin-Ferrari-Sasamoto \cite{BFS08}, where it arises from the limit of the TASEP with half-flat initial condition.

Let us first introduce certain complex contours that arise in our discussion.
\begin{definition}\label{S1ContV}
For $a \in \mathbb{C}$ and $\phi \in (0, \pi)$ we define the contour $C_{a,\phi}$ to be the union of $\{a + ye^{-\i\phi)} \}_{y \in \mathbb{R}^+}$ and $\{a + ye^{\i \phi} \}_{y \in \mathbb{R}^+}$ oriented to have increasing imaginary part.
\end{definition}
The following is Definition 2.1 from \cite{BFS08}.
\begin{definition}\label{Airy21} (The $\mbox{Airy}_{2 \rightarrow 1}$ process) Let us set 
\begin{equation}\label{QW1}
\tilde{x} = x - s^2 \cdot {\bf 1}\{s \leq 0\}, \hspace{2mm} \tilde{y} = y - t^2 \cdot {\bf 1}\{t \leq 0\},
\end{equation}
and define the kernel 
\begin{equation}\label{QW2}
K_{\infty}(s, x; t , y) = - \frac{{\bf 1}\{ t > s\}  e^{ - \frac{(\tilde{y} - \tilde{x})^2}{4 (t -s)} }  }{\sqrt{4\pi (t -s )}}+ \frac{1}{(2\pi \i)^2} \int_{C_{1,\pi/4}} \hspace{-2mm} dw \int_{C_{0,3\pi/4}} \hspace{-2mm}  dz \frac{e^{w^3/3 + t w^2 - \tilde{y} w}}{e^{z^3/3 + s z^2 - \tilde{x}z}} \cdot \frac{-2w}{z^2 - w^2}, 
\end{equation}
where $C_{1,\pi/4}, C_{0,3\pi/4}$ are as in Definition \ref{S1ContV}. The {\em Airy$_{2 \rightarrow 1}$ process}, denoted by $\mathcal{A}_{2 \rightarrow 1}$, is the process on $\mathbb{R}$ with $m$-point joint distribution at $t_1 < t_2 < \cdots < t_m$ given by the Fredholm determinant 
\begin{equation}\label{QW3}
\mathbb{P} \left( \cap_{k = 1}^m \{ \mathcal{A}_{2\rightarrow 1}(t_k) \leq y_k \} \right) = \det \left( I - \chi_y K_{\infty} \chi_{y} \right)_{L^2(\{t_1, \dots, t_m\}\times \mathbb{R})}.
\end{equation}
In (\ref{QW3}) we have that the measure on $\{t_1, \dots, t_m\}\times \mathbb{R}$ is the product measure of the counting measure on $\{t_1, \dots, t_m\}$ and the Lebesgue measure on $\mathbb{R}$. The function $\chi_y$ is defined on $\{t_1, \dots, t_m\}\times \mathbb{R}$ through $\chi_y(t_k,x) = {\bf 1}\{ x > y_k\}$. We mention that in \cite[Appendix B]{BFS08} it was shown that there is a trace class kernel on $L^2(\{t_1, \dots, t_m\}\times \mathbb{R})$ that is conjugate to $\chi_y K_{\infty} \chi_{y}$, so that the Fredholm determinant in (\ref{QW3}) is well-defined.
\end{definition}
\begin{remark} In \cite{BFS08} the authors chose a different set of contours in the definition of $K_{\infty}(s, x; t , y)$ in (\ref{QW2}), denoted by $\gamma_+, \gamma_-$ in that paper. Starting from \cite[(2.7)]{BFS08}, we can deform $\gamma_+,\gamma_-$ to $C_{1,\pi/4}, C_{0,3\pi/4}$, respectively, without crossing any poles of the integrand and thus without affecting the value of $K_{\infty}(s, x; t , y)$ by Cauchy's theorem. The decay necessary to deform the contours near infinity comes from the cubic terms in the exponential functions. We also mention that there is an extra minus sign in the integrand in (\ref{QW2}), compared to \cite[(2.7)]{BFS08}, which comes from the fact that $\gamma_+$ in that paper is oriented in the direction if decreasing imaginary part, whereas $C_{1,\pi/4}$ is oriented in the direction of increasing imaginary part.
\end{remark}
\begin{remark}\label{LimitA21} As explained in \cite{BFS08}, we have that the finite dimensional distributions of $\mathcal{A}_{2\rightarrow 1}(t + m)$ converge to those of $2^{1/3}\mathcal{A}_1(2^{-2/3} t)$ as $m \rightarrow \infty$, where $\mathcal{A}_1$ is the Airy$_1$ process, introduced by Sasamoto \cite{Sas05}. In addition, $\mathcal{A}_{2\rightarrow 1}(t + m)$ converges to $\mathcal{A}_2(t)$ as $m \rightarrow -\infty$, where $\mathcal{A}_2$ is the Airy$_2$ process, introduced by Pr{\" a}hofer and Spohn \cite{Spohn}. We refer the interested reader to \cite{QR13, QR14} for more background on the Airy$_{2 \rightarrow 1}$ and other Airy processes.  
\end{remark}

The way that the formula (\ref{QW3}) was derived in \cite{BFS08} was by constructing a certain process $X_{t}$, related to the TASEP with half-flat initial condition, and showing that 
\begin{equation}\label{QW4}
\lim_{t \rightarrow \infty} \mathbb{P} \left( \cap_{k = 1}^m \{ X_t(t_k) \leq y_k \} \right) = \det \left( I - \chi_y K_{\infty} \chi_{y} \right)_{L^2(\{t_1, \dots, t_m\}\times \mathbb{R})}.
\end{equation}
The fact that the right side of (\ref{QW4}) is the pointwise limit of distribution functions on $\mathbb{R}^m$ does not a priori imply that it is itself a distribution function. As we could not find a place in the literature where it is shown that the right side of (\ref{QW4}) is a distribution function, we show it in the following lemma, whose proof is given in Section \ref{Section6}. In the same lemma we also establish some properties of the finite-dimensional distribution functions of $ \mathcal{A}_{2\rightarrow 1}$, which will be required in our arguments.

\begin{lemma}\label{S1LWD} The following all hold.
\begin{enumerate}
\item There is a probability space $(\Omega, \mathcal{F}, \mathbb{P})$ and a real-valued process $\{ \mathcal{A}_{2 \rightarrow 1}(t): t \in \mathbb{R}\}$ on that space, whose finite-dimensional distribution is given by (\ref{QW3}).
\item For each $m$-tuple $t_1 < \cdots < t_m$ the right side of (\ref{QW3}) is continuous in $(y_1, \dots, y_m) \in \mathbb{R}^m$.
\item For each $y_1, t_1 \in \mathbb{R}$ we have the following identity
\begin{equation}\label{CrossFD2}
\begin{split}
&\det \left( I - \chi_y K_{\infty} \chi_{y} \right)_{L^2(\{t_1\}\times \mathbb{R})} = 1 + \sum_{k = 1}^{\infty} I_k, \mbox{ where } \\
& I_k = \frac{1}{(2\pi \i)^{2k} k!}  \int_{C_{1, \pi/4}^k}  \hspace{-2mm}d\vec{w} \int_{C_{0, 3\pi/4}^k}\hspace{-2mm} d\vec{z} \det \left[ \frac{2w_a e^{w_a^3/3 + t_1 w_a^2 +{\bf 1}\{t_1 \leq 0\}  t_1^2 w_a -y_1 w_a  }}{( z_a^2 - w_b^2)(w_a - z_a) e^{z_a^3/3 + t_1 z_a^2 +{\bf 1}\{t_1 \leq 0\}  t_1^2 z_a  - y_1 z_a}} \right]_{a,b = 1}^k     ,
\end{split}
\end{equation}
$C_{1,\pi/4}, C_{0,3\pi/4}$ are as in Definition \ref{S1ContV}. Part of the statement is that the integrals in (\ref{CrossFD2}) are all finite, and the series is absolutely convergent.
\end{enumerate}
\end{lemma}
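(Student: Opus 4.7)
The plan is to prove the three parts in the order (3), (2), (1), since each subsequent part builds on the previous. Part (3) is the main technical task; parts (2) and (1) then follow as consequences.

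For part (3), I would begin with the standard Fredholm expansion
\[
\det\bigl(I - \chi_{y_1} K_\infty \chi_{y_1}\bigr)_{L^2(\{t_1\}\times\mathbb{R})} = \sum_{k=0}^\infty \frac{(-1)^k}{k!} \int_{(y_1, \infty)^k} \det\bigl[K_\infty(t_1, x_i; t_1, x_j)\bigr]_{i,j=1}^k \, d\vec{x}.
\]
Since $s = t = t_1$, the Gaussian summand in (\ref{QW2}) vanishes, leaving only the double contour integral in each matrix entry. Substituting this, expanding the determinant over permutations $\sigma \in S_k$, and using Fubini to swap the $\vec{x}$-integration with the $(\vec{w}, \vec{z})$-integrations, each one-dimensional $x_i$-integral evaluates to $e^{\tilde{y}_1(z-w)}/(w-z)$; this integration converges because $\operatorname{Re}(w) \geq 1$ on $C_{1,\pi/4}$ while $\operatorname{Re}(z) \leq 0$ on $C_{0,3\pi/4}$. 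Re-summing over $\sigma$ produces a Cauchy-type determinant $\det[1/(w_a - z_b)]$, and after relabeling integration variables via a symmetrization (exploiting that both $\vec{w}$ and $\vec{z}$ are integrated over the same contour in every coordinate), the result is rearranged into the determinant form in (\ref{CrossFD2}). Fubini and absolute convergence of the series are justified via Hadamard-type bounds on the relevant determinants combined with the cubic decay $|e^{w^3/3}|, |e^{-z^3/3}| \sim e^{-y^3/(3\sqrt{2})}$ of the exponential factors along these contours.

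For part (2), continuity of $\det(I - \chi_y K_\infty \chi_y)$ in $\vec{y}$ can be obtained by either of two routes: derive a multi-time analog of the series in (3), each term of which is manifestly continuous in $\vec{y}$ through the exponential factors $e^{-y_k w}, e^{y_k z}$, then apply uniform convergence on compact subsets of $\mathbb{R}^m$; alternatively, use the trace-class conjugate of $\chi_y K_\infty \chi_y$ referenced in Definition \ref{Airy21}, show it is continuous in $\vec{y}$ in trace norm, and deduce continuity of the Fredholm determinant from continuity of $\det$ on trace-class operators.

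For part (1), I would invoke Kolmogorov's extension theorem. Consistency of the candidate finite-dimensional distributions is inherited from (\ref{QW4}): they are pointwise limits of finite-dimensional distributions of the TASEP-derived process $X_t$, so consistency is preserved. It then remains to verify that each candidate is a bona fide CDF. Monotonicity and non-negativity of rectangle increments are automatic from pointwise limits of CDFs, right-continuity comes from (2), and the limit as all $y_k \to \infty$ equals $1$ by an exponential-decay argument on the series (the factor $e^{-y_k w + y_k z}$ drives every $I_k$ to zero since $\operatorname{Re}(w-z) > 0$). The remaining step—vanishing as any $y_k \to -\infty$—is handled via tightness: the one-point case follows by combining monotonicity of the pointwise CDF limit with the fact that it equals $1$ at $+\infty$, which forces it to equal $0$ at $-\infty$; joint tightness then follows from tightness of each marginal, and any subsequential weak limit of $(X_t(t_1),\ldots,X_t(t_m))$ must have CDF equal to (\ref{QW3}) by uniqueness of pointwise CDF limits.

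The principal obstacle is in part (3): matching the resummed Fredholm expansion to the precise determinant form in (\ref{CrossFD2}) requires either a clean algebraic identity between two different determinant rearrangements or a symmetrization of integration variables, and establishing uniform absolute integrability at every step—so that Fubini, series rearrangement, and contour deformations are all rigorously justified—constitutes the main technical content of the argument.
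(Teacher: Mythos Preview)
Your approach to part (3) is essentially the paper's: Fredholm expansion, Hadamard's inequality plus cubic decay to justify Fubini, integrate out the $\lambda$-variables using $\Re(w-z)\geq 1$, and re-collect into the stated determinant. No symmetrization is actually needed; multilinearity of the determinant suffices.

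For part (2) your routes would work in principle, but the paper does something slicker that you may prefer: it proves continuity only for $m=1$ from the series in part (3), and then reduces $m>1$ to $m=1$ by the elementary inequality
\[
\bigl|f_m(\vec{X};\vec{t})-f_m(\vec{x};\vec{t})\bigr|\leq \sum_{k=1}^m\bigl[f_1(X_k;t_k)-f_1(x_k;t_k)\bigr],
\]
valid because each $f_m$ is a pointwise limit of joint CDFs of the TASEP process $X_n$. This avoids handling the Gaussian off-diagonal block of $K_\infty$ altogether.

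There is, however, a genuine gap in your argument for part (1). Your claimed deduction ``monotonicity of the pointwise CDF limit together with $F(+\infty)=1$ forces $F(-\infty)=0$'' is false: the constant function $F\equiv 1$ is a monotone pointwise limit of CDFs with $F(+\infty)=1$ but $F(-\infty)=1$. Pointwise limits of CDFs need not be tight from below; mass can escape to $-\infty$. Your series argument for $F(+\infty)=1$ is fine (indeed more direct than the paper's), but it gives no information about $F(-\infty)$, since the terms $I_k$ blow up as $y_1\to-\infty$.

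The paper fills this gap with a nontrivial external input you omit: by \cite[Theorem 1]{QR13} one has $G^{2\to 1}_{t_1}(y_1)=\mathbb{P}\bigl(\sup_{x\leq t_1}[\mathcal{A}_2(x)-x^2]\leq y_1 - t_1^2\mathbf{1}\{t_1\leq 0\}\bigr)$, where $\mathcal{A}_2$ is the Airy$_2$ process. Almost-sure continuity of $\mathcal{A}_2$ then makes the supremum a genuine $(-\infty,\infty]$-valued random variable, giving $G^{2\to 1}_{t_1}(-\infty)=0$ immediately. (The paper then establishes $G^{2\to 1}_{t_1}(+\infty)=1$ via the Johansson identity $\mathbb{P}(\sup_{x\in\mathbb{R}}[\mathcal{A}_2(x)-x^2]\leq\tilde y)=F_1(4^{1/3}\tilde y)$.) Once the one-point marginals are known to be honest DFs, tightness of $(X_n(t_1),\dots,X_n(t_m))$ follows and your Kolmogorov-extension argument goes through as written.
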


%
\subsection{Main result}\label{Section1.4} Here we present the main asymptotic result of the paper, which describes the large time limit of the height function $h$ as in (\ref{DefH}), under the measure $\Phf$ from Definition \ref{DefASEP}.
\begin{theorem}\label{mainThm} Assume the same notation as in Definition \ref{DefASEP}. There exists $\tilde{\tau} \in (0,1)$ sufficiently small, so that for all $\tau \in (0, \tilde{\tau}]$ the following holds. For each $\alpha, y \in \mathbb{R}$ we have 
\begin{equation}\label{mainLE1}
\begin{split}
&\lim_{t \rightarrow \infty} \Phf \left( t^{-1/3} \cdot \left(t/2 + t^{1/3} \cdot (\alpha^2/2) \cdot {\bf 1}\{ \alpha \leq 0\}  - h( t/\gamma, \lfloor  t^{2/3} \alpha \rfloor)  \right) \leq y \right) \\
&= \mathbb{P}\left(  2^{-1/3} \mathcal{A}_{2 \rightarrow 1} (2^{-1/3} \alpha) \leq y\right),
\end{split}
\end{equation}
where $\mathcal{A}_{2 \rightarrow 1} $ is as in Definition \ref{Airy21}.
\end{theorem}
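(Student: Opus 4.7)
The strategy is to specialize the exact formula in Theorem \ref{S1Thm1} so that its left side approximates the distribution function in \eqref{mainLE1}, and then to carry out a term-by-term steepest descent on its right side, identifying the limit with the series in \eqref{CrossFD2} of Lemma \ref{S1LWD}. In preparation, set $T = t/\gamma$ and $x_t = \lfloor t^{2/3}\alpha\rfloor$; using $h = 2N-x$, the event in \eqref{mainLE1} becomes $\{N_{x_t}(T) \geq n_t(y,\alpha)\}$ for the explicit integer
\[
n_t(y,\alpha) = \left\lceil \tfrac{1}{2}\bigl(x_t + t/2 + t^{1/3}(\alpha^2/2)\mathbf{1}\{\alpha\leq 0\} - t^{1/3}y\bigr)\right\rceil.
\]
Choose $\zeta_t = -\tau^{-n_t(y,\alpha)} \in (-\infty,0)$, so that $\Ehf[e_\tau(\zeta_t\tau^{N_{x_t}(T)})] = \Ehf[e_\tau(-\tau^{N_{x_t}(T)-n_t(y,\alpha)})]$. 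The function $m\mapsto e_\tau(-\tau^m)$ is $[0,1]$-valued, monotone increasing in $m$, and of $O(1)$ transition width near $m = 0$, so it can be sandwiched between $\mathbf{1}\{m\geq \pm\delta\}$ up to an error vanishing as $\delta\to 0$; combined with the continuity in $y$ of the limiting distribution from Lemma \ref{S1LWD}(2), this reduces Theorem \ref{mainThm} to computing $\lim_{t\to\infty}\Ehf[e_\tau(\zeta_t\tau^{N_{x_t}(T)})]$.

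Theorem \ref{S1Thm1} then writes the $q$-Laplace transform as $1 + \sum_{k\geq 1}H_k(\zeta_t)$, with $H_k$ a $2k$-fold integral of $F(\zeta_t;\vec s,\vec w)$ over $(1/2+\i\mathbb{R})^k \times \gamma_{-1,0}^k$. The exponential pieces of $F$ assemble into $\exp(\sum_a \Psi_t(w_a,s_a))$ with
\[
\Psi_t(w,s) = \tfrac{t}{1+w} - \tfrac{t}{1+\tau^s w} + (x_t-1)\log\tfrac{1+\tau^s w}{1+w} + s\log(1-\tau) - s n_t(y,\alpha)\log\tau,
\]
and under the scalings $x_t \sim t^{2/3}\alpha$, $n_t \sim t/4$ the action $\Psi_t$ admits a double critical point at $(w,\tau^s) = (-1,1)$. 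We substitute $w_a = -1 + 2^{-1/3}t^{-1/3}\tilde w_a$ and parametrize $\tau^{s_a}$ so that $\tau^{s_a}w_a = -1 - 2^{-1/3}t^{-1/3}(\tilde z_a - \tilde w_a)$, deforming the $\vec w$-contour from $\gamma_{-1,0}^k$ to a $t$-dependent analog of $C_{1,\pi/4}^k$ and the $\vec s$-contour from $(1/2+\i\mathbb{R})^k$ to $C_{0,3\pi/4}^k$. Taylor expanding $\mathfrak f,\mathfrak g,\mathfrak h$ and $\pi(-\zeta_t)^{s}/\sin(-\pi s)$ about the critical point produces the cubic exponentials $\exp(\tilde w^3/3 + t_1 \tilde w^2 - \tilde y \tilde w)/\exp(\tilde z^3/3 + t_1 \tilde z^2 - \tilde y\tilde z)$, the prefactor $2\tilde w/(\tilde z^2 - \tilde w^2)$, the Cauchy-type factor $1/(\tilde w_a - \tilde z_a)$ coming from the determinant in $F$, and the regime-dependent shift of \eqref{QW1}, which after the $2^{1/3}$ rescaling of $y$ matches the $(\alpha^2/2)\mathbf{1}\{\alpha\leq 0\}$ term in \eqref{mainLE1}. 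This identifies the pointwise limit of $H_k(\zeta_t)$ with $I_k$ of \eqref{CrossFD2} at $t_1 = 2^{-1/3}\alpha$.

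To exchange the limit $t\to\infty$ with the sum $\sum_k$ and with the integrals we need integrable envelopes for $|F(\zeta_t;\vec s,\vec w)|$ on the deformed contours, uniform for large $t$ and summable in $k$. On the tails of the cubic contours the cubic exponentials provide decay of type $e^{-c|\tilde w_a|^3 - c|\tilde z_a|^3}$; near the critical point, a Hadamard-type bound on the Cauchy determinant, together with the $q$-Pochhammer bounds developed for Lemma \ref{S2AnalyticFD}, yields $|H_k(\zeta_t)|\leq C^k/k!$ uniformly in $t$. Passing to the limit termwise via dominated convergence then gives $\lim_{t\to\infty}\Ehf[e_\tau(\zeta_t\tau^{N_{x_t}(T)})] = 1 + \sum_{k\geq 1} I_k$, which by Lemma \ref{S1LWD}(3) is the one-point distribution function of $2^{-1/3}\mathcal A_{2\to 1}(2^{-1/3}\alpha)$ at $y$, completing the argument.

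\emph{Main obstacle.} The technical heart of the proof is the uniform-in-$t$ control of $F(\zeta_t;\vec s,\vec w)$ along the deformed contours required in the dominated convergence step. The $q$-Pochhammer factors $\mathfrak g$ and $\mathfrak h$ are not exponentially small away from the critical point, and the Mellin--Barnes factor $\pi(-\zeta_t)^{s}/\sin(-\pi s)$ requires delicate vertical decay estimates in $\Im(s)$, made worse by the fact that $|\zeta_t|$ grows exponentially in $t$. Moreover, the deformation of the $\vec w$-contour from $\gamma_{-1,0}$ to a cubic descent contour through $w=-1$ must skirt the poles of $\mathfrak g$ and $\mathfrak h$; this constraint, together with the absolute summability needed to even enter \eqref{S1QLT} (see Remark \ref{Issue1}), is the ultimate source of the smallness hypothesis on $\tau$ in the theorem.
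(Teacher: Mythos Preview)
Your overall architecture is correct and matches the paper: specialize the $\tau$-Laplace transform, take termwise limits of the $H_k(\zeta_t)$ via steepest descent, dominate the sum in $k$, and identify the answer with the series in \eqref{CrossFD2}. The extraction of the distribution function from the $\tau$-Laplace transform via the $e_\tau(-\tau^m)\approx\mathbf 1\{m\geq 0\}$ heuristic is exactly Lemma~\ref{prob}.

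However, your steepest descent is centered at the wrong point. You place the double critical point at $(w,\tau^s)=(-1,1)$, i.e.\ $w\approx -1$ and $\tau^s w\approx -1$, and zoom in via $w=-1+O(t^{-1/3})$. But $w=-1$ is a \emph{pole} of $\mathfrak f$: under your substitution $w=-1+2^{-1/3}t^{-1/3}\tilde w$ and $\tau^s w=-1-2^{-1/3}t^{-1/3}(\tilde z-\tilde w)$ one finds
\[
\frac{t}{1+w}-\frac{t}{1+\tau^s w}=2^{1/3}t^{4/3}\Bigl(\frac{1}{\tilde w}+\frac{1}{\tilde z-\tilde w}\Bigr),
\]
which diverges like $t^{4/3}$ and cannot produce cubic Airy exponentials. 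The correct calculation is to note that, with your scaling $n_t\sim t/4$, the factor $(-\zeta_t)^s$ combines with $\mathfrak f$ to give the leading action $t[G(w)-G(z)]$ with $z=\tau^s w$ and $G(w)=\tfrac{1}{1+w}+\tfrac14\log w$; solving $G'(w)=0$ gives $(1+w)^2=4w$, i.e.\ a \emph{double} critical point at $w=1$. The paper implements exactly this: Lemma~\ref{S3LRewrite} first changes variables $z=\tau^s w$ (turning the Mellin--Barnes $s$-integral into a $z$-contour integral and absorbing the oscillatory $\sin(-\pi s)^{-1}$ factor into the function $S(w,z;u,\tau)$), and then Section~\ref{Section4.1} sets $w=e^{t^{-1/3}u}$, $z=e^{t^{-1/3}v}$ and expands around $u=v=0$ using $F(z)=\tfrac{1}{1+e^z}+\tfrac z4-\tfrac12\sim z^3/48$ from Lemma~\ref{S4LemmaTaylor}. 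The structure $2v_a/(v_b^2-u_a^2)$ then emerges from Lemma~\ref{DetFor1} applied to the pair $\bigl(\det[\tfrac{1}{w_a-z_b}],\ \mathfrak g,\ \mathfrak h\bigr)$, since near $w=z=1$ one has $1-wz\approx-(u+v)t^{-1/3}$ and $1-z^2\approx -2vt^{-1/3}$.

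A second point: your sketch of the $k$-uniform bound (``Hadamard on the Cauchy determinant plus the $q$-Pochhammer bounds of Lemma~\ref{S2AnalyticFD}'') is not enough. Those estimates give $|\mathfrak h|^{\binom k2}$ growth of order $e^{ck^2}$, which the paper can beat in Section~\ref{Section2} only by the $\rho^{\binom k2}$ decay of the Cauchy determinant on the \emph{pre-limit} circles; after deformation to the descent contours that decay is gone. The paper's Proposition~\ref{S3MainP2} instead uses Lemma~\ref{S5CTBound}, which Taylor-expands the logarithm of the cross term and shows that the dangerous quadratic piece has a \emph{sign}, namely $-A(\tau)\Re\bigl[(\sum_a(W_a-Z_a))^2\bigr]\le 0$ along the descent contours (equation~\eqref{S5S3E2}); this is where the smallness of $\tau$ enters in an essential way beyond \eqref{S1TauSmall}. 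For $k\ge t$ a separate argument on different circles (Section~\ref{Section5.3}) is needed.
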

\begin{remark}\label{Issue3} We mention that Theorem \ref{mainThm} agrees with the analogous result for TASEP (corresponding to $q= 1$ and $p = 0$) from \cite{BFS08} as we explain here. If we let $Y_n(t)$ denote the location of the $n$-th leftmost particle of TASEP at time $t$, we have from \cite[Theorem 2.2]{BFS08} that for each $\alpha, {y} \in \mathbb{R}$, and $n(\alpha, t) = \lfloor t/4 + (1/2) \alpha t^{2/3} \rfloor$
\begin{equation}\label{YU1}
\lim_{t \rightarrow \infty} \Phf \left( \frac{Y_{n(\alpha, t)}(t)  -\alpha t^{2/3} + (\alpha^2/2){\bf 1}\{ \alpha \leq 0\} t^{1/3}    }{t^{1/3} } \leq {y} \right) = \mathbb{P}\left( 2^{-1/3} \mathcal{A}_{2 \rightarrow 1} (2^{-1/3} \alpha) \leq {y}\right).
\end{equation}
We mention that we have set $\tau = 2^{-1/3} \alpha$ in \cite[Theorem 2.2]{BFS08} and also $Y_n(t) = - X_n(t)$ as in that paper, since we have started from the initial condition $\eta_0 = {\bf 1}\{ x \in 2\mathbb{N}\}$ and $q =1 , p =0$, while in \cite{BFS08} the convention $\eta_0 = {\bf 1}\{ -x \in 2\mathbb{N}\}$ and $q =0 , p =1$ is used. Let us set 
$$\alpha_t = \alpha + t^{-1/3} \cdot \left(y- (1/2) \alpha^2 {\bf 1}\{ \alpha \leq 0\} \right),$$
and observe that we have the equality of events
$$\left\{ \frac{Y_{n(\alpha, t)}(t)  -\alpha t^{2/3} + (\alpha^2/2){\bf 1}\{ \alpha \leq 0\}  t^{1/3}    }{t^{1/3} } \leq {y} \right\} = \{ Y_{n(\alpha, t)}(t) \leq t^{2/3} \alpha_t \} = \{ N_{\lfloor t^{2/3} \alpha_t \rfloor}(t) \geq  n(\alpha, t)\},$$
where we recall that $N_x(t)$ is as in (\ref{DefN}). In particular, from (\ref{DefH}) and (\ref{YU1}) we get 
\begin{equation}\label{YU2}
\begin{split}
&\lim_{t \rightarrow \infty} \hspace{-1mm} \Phf \left( \hspace{-1mm}\frac{h(t,\lfloor t^{2/3} \alpha_t \rfloor) +\lfloor t^{2/3} \alpha_t \rfloor }{2}  \geq t/4 +  t^{2/3} \cdot \frac{\alpha_t - t^{-1/3} y + t^{-1/3} \cdot (\alpha^2/2) \cdot {\bf 1}\{ \alpha \leq 0\}  }{2} \right) \\
&=\lim_{t \rightarrow \infty}  \Phf \left( h(t,\lfloor t^{2/3} \alpha_t \rfloor)  \rfloor  \geq t/2  - t^{1/3} y + t^{1/3} \cdot (\alpha^2/2) \cdot {\bf 1}\{ \alpha \leq 0\} \right) \\
&= \mathbb{P}\left( 2^{-1/3} \mathcal{A}_{2 \rightarrow 1} (2^{-1/3} \alpha) \leq {y}\right).
\end{split}
\end{equation}
Equation (\ref{YU2}) agrees with (\ref{mainLE1}) as $\gamma = q - p = 1$. We mention that (\ref{YU2}) does not follow from Theorem \ref{mainThm}, as $p = 0$ is not allowed, and the point of this computation is to provide a sanity check for the scaling we perform in Theorem \ref{mainThm}.
\end{remark}

%
\subsection*{Outline}\label{Section1.5} The rest of the paper is organized as follows. In Section \ref{Section2} we prove Theorem \ref{S1Thm1}, which is the starting point of our asymptotic analysis. In Section \ref{Section3} we present the proof of Theorem \ref{mainThm}, which relies on taking the limit of the identity (\ref{S1QLT}), when $\zeta$ is scaled appropriately with $t$ and $t \rightarrow \infty$. In order to obtain the limit of (\ref{S1QLT}), we need to show that each summand converges, which is the statement of Proposition \ref{S3MainP1}, and that we can exchange the order of the series and the limit, which is ensured by Proposition \ref{S3MainP2}. Proposition \ref{S3MainP1} is proved in Section \ref{Section4} and relies on the method of steepest descent, as well as various estimates of the functions in (\ref{S1BasicFun}). Proposition \ref{S3MainP2} is proved in Section \ref{Section5} and is based on estimating the functions in (\ref{S1SpecFun}) along two classes of contours, depending on whether $k$ is large or small relative to $t$. In Section \ref{Section6} we prove Lemma \ref{S1LWD}.

%
\subsection*{Acknowledgments}\label{Section1.6} We are grateful to Alexei Borodin, Jeremy Quastel and Daniel Remenik for their useful comments on earlier drafts of the paper. ED is partially supported by NSF grant DMS:2054703.

%
%
\section{Prelimit formula}\label{Section2} In this section we present the proof of Theorem \ref{S1Thm1}, which is the starting point of our asymptotic analysis in the next sections. The proof of Theorem \ref{S1Thm1} is given in Section \ref{Section2.2} and in Section \ref{Section2.1} below we establish several statements, which will be required. We continue with the same notation as in Section \ref{Section1}.

%
%
\subsection{Definitions and notation}\label{Section2.1} In this section we present two key lemmas, which will be required in the proof of Theorem \ref{S1Thm1} in the next section, and whose proofs are postponed until Section \ref{Section2.3}. After stating the lemmas we summarize various estimates, which will also be of future use.

The first key result we require is as follows.
\begin{lemma}\label{S2AnalyticFD} Fix $k \in \mathbb{N}$ and $\zeta \in \mathbb{C} \setminus [0, \infty)$. Then, the integral $H_{k}(\zeta) $ in (\ref{S1KthSum}) is well-defined and finite. Moreover,  as a function of $\zeta$ we have that $H_{k}(\zeta)$ is analytic in $\mathbb{C} \setminus [0, \infty)$. If $\tau$ satisfies (\ref{S1TauSmall}), then the series on the right side of (\ref{S1QLT}) is absolutely convergent for each $\zeta \in \mathbb{C} \setminus [0, \infty)$ and defines an analytic function in $\mathbb{C} \setminus [0, \infty)$.
\end{lemma}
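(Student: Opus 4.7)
The plan is to prove the three assertions of the lemma in sequence: finiteness of $H_k(\zeta)$ for each $k$ and $\zeta$, holomorphy of $H_k$ on $\mathbb{C}\setminus[0,\infty)$, and, under (\ref{S1TauSmall}), absolute convergence together with holomorphy of the limit $1 + \sum_{k\ge 1} H_k(\zeta)$. The main obstacle will be the third claim, which demands a precise per-variable bound matching the constant in (\ref{S1TauSmall}).

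For finiteness, I parametrize $w_a = \tau^{-1/8} e^{\i\theta_a}$ and $s_a = 1/2 + \i y_a$, so that $|\tau^{s_a}| = \tau^{1/2}$ is fixed. On this contour $|w_a\tau^{s_a}| = \tau^{3/8} < \tau^{-1/8} = |w_b|$, which gives $|w_a\tau^{s_a} - w_b| \ge \tau^{-1/8}(1 - \tau^{1/2}) > 0$ and therefore uniformly bounded matrix entries in the determinant. The same observation shows every argument appearing in the $q$-Pochhammer denominators of $\mathfrak{g}$ and $\mathfrak{h}$ has modulus of the form $\tau^{m+3/8}$ or $\tau^{m+1/4}$, all strictly less than $1$, so the standard inequalities $(1-|z|) \le |1 \pm z| \le (1+|z|)$ give uniform upper bounds on $\mathfrak{g}$ and $\mathfrak{h}$. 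The factor $\mathfrak{f}$ is uniformly bounded in $\vec{w}$ on the contour (with bound depending on the fixed parameters $t,x$) because $|1+w_a| \ge \tau^{-1/8} - 1 > 0$ and $|1 + \tau^{s_a} w_a| \ge 1 - \tau^{3/8} > 0$. The only factor with non-trivial behavior as $|y_a| \to \infty$ is
\[
\Bigl| \frac{\pi(-\zeta)^{s_a}}{\sin(-\pi s_a)}\Bigr| = \frac{\pi |\zeta|^{1/2} e^{-y_a \arg(-\zeta)}}{\cosh(\pi y_a)},
\]
which decays like $e^{-(\pi - |\arg(-\zeta)|)|y_a|}$; since $|\arg(-\zeta)| < \pi$ for $\zeta \in \mathbb{C}\setminus[0,\infty)$, this gives integrable exponential decay in each $y_a$. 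Combined with the compact $\theta_a$-integration, Fubini then yields finiteness of $H_k(\zeta)$.

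For the analyticity, note that the integrand depends on $\zeta$ only through $\prod_a (-\zeta)^{s_a} = \exp\bigl(\sum_a s_a \mathrm{Log}(-\zeta)\bigr)$, which is holomorphic on $\mathbb{C}\setminus[0,\infty)$ by the principal branch. To pass holomorphy under the integral I would fix an arbitrary compact $K \subset \mathbb{C}\setminus[0,\infty)$; on $K$ both $|\zeta|$ is bounded above and $|\arg(-\zeta)|$ is bounded away from $\pi$, so the bounds from the previous step are uniform in $\zeta \in K$. Morera's theorem combined with Fubini (or, equivalently, differentiation under the integral sign justified by dominated convergence) then gives holomorphy of $H_k$ on $K$, hence on all of $\mathbb{C}\setminus[0,\infty)$.

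The hard part is the third claim. The target is a pointwise bound $|H_k(\zeta)| \le C(\zeta) R^k$ where $R$ is the left side of (\ref{S1TauSmall}); geometric summation then yields absolute convergence, and the fact that the bounds are uniform for $\zeta$ in compacts of $\mathbb{C}\setminus[0,\infty)$ upgrades this to holomorphy of the sum by Weierstrass's theorem. For the determinantal factor I would invoke the Cauchy identity
\[
\det\Bigl[\frac{-1}{w_a \tau^{s_a} - w_b}\Bigr]_{a,b=1}^k = (-1)^k \frac{\prod_{a<b}(w_a\tau^{s_a} - w_b\tau^{s_b})(w_b - w_a)}{\prod_{a,b}(w_a\tau^{s_a} - w_b)},
\]
which separates the diagonal denominators $\prod_a w_a(\tau^{s_a}-1)$ and organizes the off-diagonal factors so that they combine with $\prod_{a<b}\mathfrak{h}$ — the crucial point is that the interactions between variables collapse to a controlled multiplicative constant per variable pair rather than growing with $k$. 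Using the determinantal diagonal bound $|w_a(\tau^{s_a}-1)| \ge \tau^{-1/8}(1-\tau^{1/2})$ together with the $q$-Pochhammer bounds from the first step (which produce the $(-\tau^{3/4};\tau)_\infty$ and $(\tau^{1/4};\tau)_\infty$ factors) and integrating out the $y$-variables against the exponential decay, one arrives at the estimate $|H_k(\zeta)| \le C(\zeta) R^k$. The delicate point, and the main obstacle, is the careful bookkeeping needed to obtain exactly the constant $R$ appearing in (\ref{S1TauSmall}) — in particular, to verify that the scalar prefactor $(\tau^{1/2}+\tau^{1/4})/(1-\tau^{1/2})^2$ emerges from combining the Cauchy-determinantal diagonal factors with the residual off-diagonal interactions after the cancellation with $\mathfrak{h}$. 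Once $R<1$ is in force, absolute summability is immediate and holomorphy of the limit follows by Weierstrass.
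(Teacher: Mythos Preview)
Your treatment of the first two assertions (finiteness and analyticity of each $H_k$) is correct and essentially matches the paper's argument: the paper also bounds $\mathfrak{f},\mathfrak{g},\mathfrak{h}$ and the determinant uniformly on the contour, uses the exponential decay of $\pi(-\zeta)^s/\sin(-\pi s)$ in $\Im s$ (uniformly for $\zeta$ in compacta of $\mathbb{C}\setminus[0,\infty)$), and obtains analyticity by approximating with integrals over compact $s$-contours.

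The gap is in the third assertion, and it is twofold. First, your target bound $|H_k(\zeta)|\le C(\zeta)R^k$ with $R$ equal to the constant in (\ref{S1TauSmall}) is not the right shape: what actually comes out is a bound of order $\rho^{\binom{k}{2}}$ (times a factor exponential in $k$), where $\rho$ is exactly the constant in (\ref{S1TauSmall}). The cross terms do not disappear; they contribute one factor of $\rho$ per unordered pair, and summability requires $\rho<1$. Second, the mechanism you describe --- that the off-diagonal factors of the Cauchy identity ``combine with $\prod_{a<b}\mathfrak{h}$'' so that the interactions collapse --- is not how the estimate is obtained. There is no algebraic cancellation between the Cauchy-determinant factors (built from differences $w_a\tau^{s_a}-w_b$, $w_a-w_b$) and $\mathfrak{h}$ (built from $q$-Pochhammer symbols in the products $w_aw_b$). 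The paper bounds the two pieces \emph{separately}: $|\mathfrak{h}|\le (1+\tau^{-1/4})\frac{(-\tau^{3/4};\tau)_\infty^2}{(\tau^{1/4};\tau)_\infty^2}$ gives growth of order $\tau^{-\frac14\binom{k}{2}}$, while the Cauchy determinant, via the refined estimate of Lemma~\ref{DetBounds}(ii), contributes decay $\tau^{\frac12\binom{k}{2}}/(1-\tau^{1/2})^{k^2}$ with only a $k^k$ prefactor. Multiplying these yields $\rho^{\binom{k}{2}}$ with $\rho$ exactly the left side of (\ref{S1TauSmall}). If instead you expand the Cauchy determinant via the product formula and bound each factor crudely (numerator factors by $2\tau^{3/8}$ and $2\tau^{-1/8}$, denominator factors below by $\tau^{-1/8}(1-\tau^{1/2})$), you pick up an extra $4^{\binom{k}{2}}$ and end up needing $4\rho<1$ rather than $\rho<1$; so even the ``careful bookkeeping'' you allude to would not recover the constant in (\ref{S1TauSmall}) without the sharper determinant lemma.
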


The second key lemma we need is the following. It is a special case of a Mellin-Barnes type integral representation from \cite[Lemma 3.20]{BorCor}. 
\begin{lemma}\label{S2MellinBarnes} Let $N,k \in \mathbb{N}$ and $\zeta \in \mathbb{C} \setminus [0,\infty)$ be such that $|\zeta| < 1$. Then, the series 
\begin{equation}\label{S1Sum}
\begin{split}
&\sum_{n_1 = 1}^{\infty} \cdots \sum_{ n_k = 1}^{\infty} \frac{\zeta^{n_1 + \cdots + n_k} }{k! (2\pi \i)^k} \oint_{\gamma_{-1,0}^k}d\vec{w}  F(\vec{n}, \vec{w})
\end{split}
\end{equation}
is absolutely convergent and equals $H_k(\zeta)$ from (\ref{S1KthSum}). Here, $F(\vec{s}, \vec{w})$ is as in (\ref{S1SpecFun}) and $\gamma_{-1,0}$ is as in Proposition \ref{S1Prop1}.
\end{lemma}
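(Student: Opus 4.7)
The plan is to invoke the Mellin--Barnes integral representation \cite[Lemma 3.20]{BorCor}. In one variable, that lemma converts a sum $\sum_{n \geq 1} \zeta^n f(n)$ (with $|\zeta| < 1$ and suitable $f$) into the integral $\frac{1}{2\pi \i} \int_{1/2 + \i \mathbb{R}} \frac{\pi (-\zeta)^s}{\sin(-\pi s)} f(s)\, ds$, the identity arising from deforming the vertical line far to the right and picking up residues at the positive integers (where the residues of $\pi(-\zeta)^s/\sin(-\pi s)$ reproduce the weights $\zeta^n$). Iterating this identity in each of the variables $n_1, \dots, n_k$ (or invoking the $k$-dimensional form of \cite[Lemma 3.20]{BorCor} directly) should convert the series (\ref{S1Sum}) into precisely $H_k(\zeta)$ from (\ref{S1KthSum}). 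The task thus reduces to verifying three hypotheses: (i) absolute convergence of (\ref{S1Sum}); (ii) analytic extension of $F(\vec{n}, \vec{w})$ from $\vec{n} \in \mathbb{N}^k$ to a tube around $(1/2 + \i \mathbb{R})^k$; and (iii) exponential decay of the extended integrand in $|\operatorname{Im}(\vec{s})|$, uniformly in $\vec{w} \in \gamma_{-1,0}^k$.

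To handle (i) I would bound $|F(\vec{n}, \vec{w})|$ uniformly in $\vec{w}$ on the compact contour, where $|w_a| = \tau^{-1/8}$ keeps each $w_a$ bounded away from the singularities $0$ and $-1$ of $\mathfrak{f}(w;n)$. The geometric factor $(1-\tau)^{n_a}$ in $\mathfrak{f}(w_a;n_a)$ supplies the crucial decay, while the remaining pieces of $\mathfrak{f}$, the $q$-Pochhammer ratios $\mathfrak{g}$ and $\mathfrak{h}$, and the determinantal kernel $\det[-1/(w_a \tau^{n_a} - w_b)]$ are uniformly bounded in $\vec{n}$: for instance $|w_a \tau^{n_a} - w_b| \geq \tau^{-1/8} - \tau^{7/8} > 0$ for $n_a \geq 1$, and as $n_a \to \infty$ the $\tau^{n_a}$-dependent arguments of the $q$-Pochhammer symbols tend to $0$. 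Combining these estimates yields $|F(\vec{n}, \vec{w})| \leq C^k \prod_a (1-\tau)^{n_a}$ for a constant $C$ depending on $k, t, x, \tau$, whence (\ref{S1Sum}) converges absolutely for $|\zeta| < 1$. For (ii), one extends $F$ by replacing $\tau^{n_a}$ with $\tau^{s_a}$ (principal branch). On $\operatorname{Re}(s_a) = 1/2$ we have $|\tau^{s_a} w_a| = \tau^{3/8}$ and $|\tau^{s_a} w_a^2| = \tau^{1/4}$, both strictly less than $1$, so the denominator Pochhammers of $\mathfrak{g}$ and $\mathfrak{h}$ (which vanish only at arguments of the form $\tau^{-m}$, $m \geq 0$) cannot vanish; this gives analyticity on a tube around the contour and the bound from (i) extends with the same constant $C^k$. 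For (iii), the decay is supplied by the factor $\prod_{a=1}^k \frac{\pi (-\zeta)^{s_a}}{\sin(-\pi s_a)}$: writing $s_a = 1/2 + \i y_a$, one has $|\pi/\sin(-\pi s_a)| \sim 2\pi e^{-\pi |y_a|}$ and $|(-\zeta)^{s_a}| = |\zeta|^{1/2} e^{-y_a \arg(-\zeta)}$, and since $\zeta \in \mathbb{C} \setminus [0,\infty)$ gives $|\arg(-\zeta)| < \pi$, the product decays exponentially at rate $\pi - |\arg(-\zeta)| > 0$ in each $|y_a|$.

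With (i)--(iii) in hand, one applies the Mellin--Barnes identity in each variable $n_a$ and interchanges the resulting sums and integrals (and with the $\vec{w}$-integral) by Fubini, justified by the absolute estimates above; the residues picked up at $s_a = n_a \in \mathbb{N}$ reconstruct the weights $\zeta^{n_a}$, yielding $(\ref{S1Sum}) = H_k(\zeta)$. The main obstacle I anticipate is a clean uniform boundedness statement for the infinite products $\mathfrak{g}(w;s)$ and $\mathfrak{h}(w_1,w_2;s_1,s_2)$ along the vertical contour: although each $|\tau^{s_a + \ell} w_a|$ stays safely inside the unit disk, the individual factors $1 - \tau^{s_a + \ell} w_a$ oscillate with $\operatorname{Im}(s_a)$, and a termwise bound of the form $|1 - \tau^{s_a + \ell} w_a| \leq 1 + \tau^{\ell + 3/8}$ together with a lower bound away from zero needs to be combined carefully to control the ratios of infinite products uniformly. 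Once this technical estimate is in place, the rest of the argument is a routine application of Fubini and the Mellin--Barnes representation.
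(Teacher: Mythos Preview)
Your proposal is correct and takes essentially the same approach as the paper: both establish the identity via the Mellin--Barnes residue argument, with the paper unpacking the rectangular-contour deformation explicitly (showing the three non-vertical sides of a box $\gamma_N$ contribute $o(1)$ as $N\to\infty$) rather than citing \cite[Lemma 3.20]{BorCor} as a black box. One small imprecision: your item (ii) should ask for analyticity and uniform boundedness of $F(\vec{s},\vec{w})$ on the whole half-space $\Re(s_a)\geq 1/2$, not just a tube around the vertical line---this is what ``deforming far to the right'' actually uses. Your anticipated obstacle (uniform control of the $q$-Pochhammer ratios $\mathfrak{g},\mathfrak{h}$ along the contour) is dispatched by the elementary two-sided bound $(|z|;\tau)_\infty \leq |(z;\tau)_\infty| \leq (-|z|;\tau)_\infty$ for $|z|<1$, which depends only on $|\tau^{s_a}w_a|\leq\tau^{3/8}$ and $|\tau^{s_a}w_aw_b|\leq\tau^{1/4}$; no tracking of oscillations is needed---see (\ref{S1S1E1})--(\ref{S1S1E2}).
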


In the remainder of this section we summarize several basic estimates for the various functions that appear in (\ref{S1SpecFun}). We observe that given $c > 0$ we can find $c' > 0$ such that if $x,y \in \mathbb{R}$ and $d(x + \i y, \mathbb{Z}) \geq c$, then 
\begin{equation}\label{S2BoundSine}
\frac{1}{| \sin (\pi x + \i \pi y)|} \leq c' e^{-\pi |y|}.
\end{equation}

Recall the Cauchy determinant formula, see e.g. \cite[1.3]{Prasolov},
\begin{equation}\label{S2CauchyDet}
\det \left[ \frac{1}{x_i - y_j}\right]_{i,j = 1}^N  = \frac{\prod_{1 \leq i < j \leq N} (x_i - x_j) (y_j - y_i)}{\prod_{i,j = 1}^N (x_i - y_j)}.
\end{equation}
The following statement summarizes two different bounds on the above Cauchy determinant.
\begin{lemma}\label{DetBounds}\cite[Lemma 3.12]{ED2020} Let $N \in \mathbb{N}$.
\begin{enumerate}
\item Hadamard's inequality: If $A$ is an $N \times N$ matrix and $v_1, \dots ,v_N$ denote the column vectors of $A$, then $|\det A| \leq \prod_{i = 1}^N \|v_i\|$ where $\|x\| = (x_1^2 + \cdots + x_N^2)^{1/2}$ for $x = (x_1, \dots, x_N)$. 
\item Fix $r, R \in (0,\infty)$ with $R > r$. Let $z_i, w_i \in \mathbb{C}$ be such that $|w_i| = R$ and $|z_i|  \leq r$ for $i = 1, \dots, N$. Then, 
\begin{equation}\label{CDetGood}
\left|\det \left[ \frac{1}{z_i - w_j}\right]_{i,j = 1}^N \right| \leq R^{-N} \cdot \frac{N^N \cdot (r/R)^{\binom{N}{2}}}{(1-r/R)^{N^2}}.
\end{equation}
\end{enumerate}
\end{lemma}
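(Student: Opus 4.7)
The first claim is classical Hadamard's inequality, which I would prove via the Gram matrix: $A^*A$ is positive semi-definite with diagonal entries $(A^*A)_{ii} = \|v_i\|^2$ and $|\det A|^2 = \det(A^*A)$, and the determinant of a positive semi-definite matrix is bounded above by the product of its diagonal entries (by AM--GM applied to the eigenvalues after normalizing by the diagonal, equivalently via the Cholesky factorization). Taking square roots gives $|\det A| \leq \prod_i \|v_i\|$.

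For the second claim, the plan is to combine the Cauchy determinant formula (\ref{S2CauchyDet}) with part (1). By (\ref{S2CauchyDet}), the determinant in question equals the ratio of the two Vandermonde-type products $\prod_{1 \leq i < j \leq N}(z_i - z_j)$ and $\prod_{1 \leq i < j \leq N}(w_j - w_i)$ divided by $\prod_{i,j=1}^N (z_i - w_j)$. I will bound each Vandermonde factor by recognizing that, up to sign, $\prod_{i<j}(z_j - z_i) = \det[z_i^{j-1}]_{i,j=1}^N$, and applying Hadamard to the matrix $[z_i^{j-1}]$: its $j$-th column has Euclidean norm at most $(\sum_i |z_i|^{2(j-1)})^{1/2} \leq \sqrt{N}\, r^{j-1}$, so the product of column norms equals $\prod_{j=1}^N \sqrt{N}\, r^{j-1} = N^{N/2} r^{\binom{N}{2}}$. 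The identical argument using $|w_i| = R$ yields $|\prod_{i<j}(w_j - w_i)| \leq N^{N/2} R^{\binom{N}{2}}$, and multiplying produces the numerator bound $N^N (rR)^{\binom{N}{2}}$.

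For the denominator I will use the reverse triangle inequality $|z_i - w_j| \geq |w_j| - |z_i| \geq R - r$, so $\prod_{i,j}|z_i - w_j| \geq (R-r)^{N^2} = R^{N^2}(1 - r/R)^{N^2}$. Combining the numerator and denominator estimates and using the algebraic identity $2\binom{N}{2} - N^2 = -N$ to collect powers of $R$ then delivers exactly the bound in (\ref{CDetGood}).

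The main point worth flagging is the step in the numerator estimate: the naive bound $|z_i - z_j| \leq 2r$ applied factorwise to the Vandermonde would produce the constant $4^{\binom{N}{2}} = 2^{N(N-1)}$ in place of the much sharper $N^N$ in (\ref{CDetGood}). Recognizing the numerator factors as Vandermonde determinants and applying part (1) to them is the essential trick; otherwise the estimate is routine, and I do not expect any significant obstacle.
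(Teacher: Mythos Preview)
Your proof is correct. The paper does not supply its own proof of this lemma; it simply quotes the statement from \cite[Lemma 3.12]{ED2020}, so there is nothing to compare against here. Your argument for part (2) --- expressing the numerator of the Cauchy determinant as a product of two Vandermonde determinants and applying Hadamard's inequality columnwise to each --- is the standard route to the sharp constant $N^N$, and the power-counting $2\binom{N}{2} - N^2 = -N$ is exactly what is needed to recover the stated form.
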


Let $\mathfrak{f}, \mathfrak{g}, \mathfrak{h}$ be as in (\ref{S1BasicFun}). Then, we can find $A_1 > 0$, depending on $\tau,t,p,q,x$, such that for all $w \in \gamma_{-1,0}$ (recall this was the positively oriented circle of radius $\tau^{-1/8}$, centered at the origin) and $s \in \mathbb{C}$ such that $\Re(s) \geq 1/2$ we have
\begin{equation}\label{S1S1E1}
\left| \mathfrak{f}(w; s) \mathfrak{g}(w; s)  \right| \leq A_1.
\end{equation}
Also if $w_1, w_2 \in \gamma_{-1,0}$ and $s_1, s_2 \in \mathbb{C}$ with $\Re(s_1),\Re(s_2)  \geq 1/2$ we have
\begin{equation}\label{S1S1E2}
\left| \mathfrak{h}(w_1,w_2; s_1, s_2) \right| \leq (1 + \tau^{-1/4}) \cdot \frac{(-\tau^{3/4};\tau)_{\infty} (-\tau^{3/4};\tau)_{\infty}}{(\tau^{1/4}  ;\tau)_{\infty} (\tau^{1/4};\tau)_{\infty}},
\end{equation}
where we used that for $|z| = r \in [0, 1)$ the function $|(z;\tau)_{\infty}|$ is maximized when $z = -r$ and minimized when $z = r$. 

Finally, from Lemma \ref{DetBounds}(ii) applied to $r = \tau^{3/8}$ and $R = \tau^{-1/8}$ we have for all $w_i \in \gamma_{-1,0}$ and $s_i \in \mathbb{C}$ with $\Re(s_i) \geq 1/2$ for $i = 1, \dots, k$ that
\begin{equation}\label{S1S1E3}
\left| \det \left[ \frac{-1}{w_a \tau^{s_a} - w_b}  \right]_{a,b = 1}^k  \right| \leq  \tau^{k/8} \frac{k^k \cdot \tau^{\frac{1}{2} \binom{k}{2} }}{(1 - \tau^{1/2})^{k^2} }.
\end{equation}
Combining (\ref{S1S1E1}), (\ref{S1S1E2}) and (\ref{S1S1E3}) we conclude that for each $k \in \mathbb{N}$, $w_i \in \gamma_{-1,0}$ and $s_i \in \mathbb{C}$ with $\Re(s_i) \geq 1/2$ for $i = 1, \dots, k$ we have
\begin{equation}\label{S1S1E4}
\begin{split}
&\frac{1}{k!} \left| \det \left[ \frac{-1}{w_a \tau^{s_a} - w_b}  \right]_{a,b = 1}^k   \mathfrak{f}(w_a;s_a) \mathfrak{g}(w_a; s_a) \cdot \prod_{1 \leq a < b \leq k} \mathfrak{h}(w_a, w_b; s_a, s_b) \right| \leq  A^k \cdot \rho^{\binom{k}{2}}, \mbox{ where }\\
& A =  \frac{eA_1 \tau^{1/8} }{(1 - \tau^{1/2})} \mbox{ and } \rho = \frac{(\tau^{1/2} + \tau^{1/4})}{(1 - \tau^{1/2})^{2}} \cdot   \frac{(-\tau^{3/4};\tau)_{\infty} (-\tau^{3/4};\tau)_{\infty}}{(\tau^{1/4}  ;\tau)_{\infty} (\tau^{1/4};\tau)_{\infty}}.
\end{split}
\end{equation}
In deriving the last inequality we used that $k^k \leq k! e^k$, which can be deduced from 
\begin{equation}\label{S2Rob}
n! = \sqrt{2\pi} n^{n+1/2} e^{-n} e^{r_n} \mbox{ for } n \in \mathbb{N}, \mbox{ where } \frac{1}{12n + 1} < r_n < \frac{1}{12n},
\end{equation}
see \cite[Equation (1)]{Rob}.

%
%
\subsection{Proof of Theorem \ref{S1Thm1}}\label{Section2.2} We continue with the same notation as in the statement of the theorem. For clarity we split the proof into two steps. \\

{\bf \raggedleft Step 1.} Note that by Lemma \ref{S2AnalyticFD} each summand on the right side of (\ref{S1QLT}) is well-defined and finite, and moreover by our assumption on $\tau$ in (\ref{S1TauSmall}) we have that the series on the right side is absolutely convergent and defines an analytic function in $\zeta$ on $\mathbb{C} \setminus [0, \infty)$. 

We claim that (\ref{S1QLT}) holds provided that $\zeta \in \mathbb{C} \setminus [0, \infty)$ is such that $|\zeta| < 1$. We will prove this statement in the next step. Here we assume its validity and proceed to prove that (\ref{S1QLT}) holds for all $\zeta \in \mathbb{C} \setminus [0, \infty)$.\\

From (\ref{S1QExp}) we have
\begin{equation}\label{S2S2E1}
\Ehf \left[ e_{\tau} \left( \zeta \tau^{N_x(t)} \right) \right]  = \sum_{n = 0}^{\infty} \frac{1}{((1-\tau) \zeta \tau^n; \tau)_{\infty}} \cdot \Phf (N_x(t) = n ).
\end{equation}
If $K \subset \mathbb{C} \setminus [0, \infty)$ is compact we observe that so is the set $\hat{K} = \{0\} \cup \cup_{n = 0}^{\infty} \tau^n \cdot K$, and the latter is separated from the zeros of the function $((1-\tau) z ; \tau)_{\infty}$. In particular, we conclude that there exists a constant $C$, depending on $K$ and $\tau$, such that for all $\zeta \in K$ and $n \in \mathbb{Z}_{\geq 0}$ we have
$$\left| \frac{1}{((1-\tau) \zeta \tau^n; \tau)_{\infty}} \right| \leq C.$$
The latter implies that the right side of (\ref{S2S2E1}) is the uniform over $\zeta \in K$ limit of 
$$\sum_{n = 0}^{N} \frac{1}{((1-\tau) \zeta \tau^n; \tau)_{\infty}} \cdot \Phf (N_x(t) = n )$$
as $N \rightarrow \infty$. Since each of the above functions are analytic in $\zeta$ on $\mathbb{C} \setminus [0, \infty)$, we conclude the same is true for the expressions in (\ref{S2S2E1}), cf. \cite[Chapter 2, Theorem 5.2]{Stein}.

Our work in the above paragraph shows that the left side of (\ref{S1QLT}) is analytic in $\zeta$ on $\mathbb{C} \setminus [0, \infty)$ and from Lemma \ref{S2AnalyticFD} we know the same is true for the right side. As these two functions agree when $\zeta\in \mathbb{C} \setminus [0, \infty)$ is such that $|\zeta| < 1$ by assumption, we conclude that they agree for all $\zeta \in \mathbb{C} \setminus [0, \infty)$, cf. \cite[Chapter 2, Theorem 4.8]{Stein}. This concludes the proof of the theorem.\\

{\bf \raggedleft Step 2.} In this step we fix $\zeta \in \mathbb{C} \setminus [0, \infty)$ such that $|\zeta| < 1$ and proceed to prove (\ref{S1QLT}). From (\ref{S1QExp})
\begin{equation}\label{S2S2E2}
\Ehf \left[ e_{\tau} \left( \zeta \tau^{N_x(t)} \right) \right]  = \Ehf \left[ \sum_{m = 0}^{\infty} \frac{\zeta^m \tau^{m N_x(t)}}{m_{\tau}!}  \right] = \lim_{N \rightarrow \infty} \sum_{m = 0}^{N}  \Ehf \left[ \frac{\zeta^m  \tau^{m N_x(t)} }{m_{\tau}!} \right],
\end{equation}
where we used Fubini's theorem to exchange the order of the sum and the expectation. Note that the application of Fubini's theorem is justified since 
$$\Ehf \left[ \sum_{m = 0}^{\infty} \left| \frac{\zeta^m  \tau^{m N_x(t)}}{m_{\tau}!} \right| \right] \leq \Ehf \left[ \sum_{m = 0}^{\infty} \frac{|\zeta|^m}{m_{\tau}!}\right] =  e_{\tau} \left( |\zeta| \right)< \infty,$$
where we used that $N_x(t) \geq 0$ almost surely, $\tau \in (0,1)$ and $|\zeta| < 1$ by assumption.\\

We further have from Proposition \ref{S1Prop1} for each $N \in \mathbb{N}$ that 
\begin{equation}\label{S2S2E3}
\begin{split}
&\sum_{m = 0}^{N}  \Ehf \left[ \frac{\zeta^m  \tau^{m N_x(t)} }{m_{\tau}!} \right] =1 + \sum_{k = 1}^{\infty} \sum_{n_1 = 1}^{\infty} \cdots \sum_{ n_k = 1}^{\infty} H_k^N(\zeta; n_1, \dots, n_k)  \mbox{, where }\\
&H_k^N(\zeta; n_1, \dots, n_k)  = {\bf 1}\{ n_1 + \cdots + n_k \leq N \} \cdot  \frac{\zeta^{n_1+\cdots + n_k}}{k! (2\pi \i)^k} \oint_{\gamma_{-1,0}^k}d\vec{w} F(\vec{n}, \vec{s}).
\end{split}
\end{equation}

From (\ref{S1S1E4}) and the fact that $\gamma_{-1,0}$ has length $2\pi \tau^{-1/8}$ we have
\begin{equation}\label{S2S2E4}
\left| H_k^N(\zeta; n_1, \dots, n_k) \right| \leq A^k \cdot \rho^{\binom{k}{2}}  \cdot \tau^{-k/8}  |\zeta|^{n_1 + \cdots + n_k}.
\end{equation}
In addition, from (\ref{S2S2E4}) and the dominated convergence theorem we have
\begin{equation*}
\begin{split}
&\lim_{N \rightarrow \infty}\sum_{n_1 = 1}^{\infty} \cdots \sum_{ n_k = 1}^{\infty} H_k^N(\zeta; n_1, \dots, n_k)  =  \sum_{n_1 = 1}^{\infty} \cdots \sum_{ n_k = 1}^{\infty}  \frac{\zeta^{n_1 + \cdots + n_k}}{k! (2\pi \i)^k} \oint_{\gamma_{-1,0}^k}d\vec{w} F(\vec{n}, \vec{w}).
\end{split}
\end{equation*}
The last equation and Lemma \ref{S2MellinBarnes} together imply
\begin{equation}\label{S2S2E5}
\lim_{N \rightarrow \infty}\sum_{n_1 = 1}^{\infty} \cdots \sum_{ n_k = 1}^{\infty} H_k^N(\zeta; n_1, \dots, n_k) = H_k(\zeta).
\end{equation}

Combining (\ref{S2S2E2}) and (\ref{S2S2E3}) we conclude 
\begin{equation}\label{S2S2E6}
\begin{split}
&\Ehf \left[ e_{\tau} \left( \zeta \tau^{N_x(t)} \right) \right]  = \lim_{N \rightarrow \infty} 1 + \sum_{k = 1}^{\infty} \frac{1}{k!} \sum_{n_1 = 1}^{\infty} \cdots \sum_{ n_k = 1}^{\infty} H_k^N(\zeta; n_1, \dots, n_k)  = \\
& 1 + \sum_{k = 1}^{\infty}\lim_{N \rightarrow \infty}  \frac{1}{k!} \sum_{n_1 = 1}^{\infty} \cdots \sum_{ n_k = 1}^{\infty} H_k^N(\zeta; n_1, \dots, n_k) = 1 + \sum_{k = 1}^{\infty}H_k(\zeta),
\end{split}
\end{equation}
where in the last equality we used (\ref{S2S2E5}). We mention that in exchanging the order of the sum and the limit above we used the dominated convergence theorem as from (\ref{S2S2E4}) we have
$$ \sum_{n_1 = 1}^{\infty} \cdots \sum_{ n_k = 1}^{\infty} \left| H_k^N(\zeta; n_1, \dots, n_k)  \right| \leq \left( \frac{A \tau^{-1/8}}{1 - |\zeta|} \right)^k \rho^{\binom{k}{2}},$$
 and the latter is summable over $k \in \mathbb{N}$. From (\ref{S2S2E6}) we get (\ref{S1QLT}) when $|\zeta| < 1$, as desired.

\begin{remark}\label{CompOQR} Now that we have presented our proof of Theorem \ref{S1Thm1}, let us explain how our approach compares with that in \cite{OQR}. The overall strategy of both proofs is quite similar, in that (\ref{S1QLT}) is first established when $\zeta \in \mathbb{C} \setminus [0, \infty)$ is such that $|\zeta| < 1$, and then extended to $\zeta \in \mathbb{C} \setminus [0, \infty)$ by analytic continuation. In the case of $|\zeta| < 1$ the main difficulty is in taking the $N \rightarrow \infty$ limit of (\ref{S2S2E3}) and justifying exchanging the order of the series on the right side of that equation and the limit. To analytically extend (\ref{S1QLT}) to $\zeta \in \mathbb{C} \setminus [0, \infty)$, the main difficulty is in showing that the series on the right side of (\ref{S1QLT}) is absolutely convergent and hence analytic in $\zeta$ (as the absolutely convergent series of analytic in $\zeta$ functions).

In order to overcome the two challenges above, one needs to find good bounds on the summands in the two series in (\ref{S1QLT}) and (\ref{S2S2E3}). One runs into trouble, since over $\gamma_{-1,0}^k$ the best pointwise estimate for $|\prod_{1 \leq a < b \leq k} \mathfrak{h}(w_a, w_b; s_a,s_b) |$ (which appears in both $ H_k^N(\zeta; n_1, \dots, n_k)$ and $H_k(\zeta)$) is $e^{\tilde{c} k^2}$ for some $\tilde{c} > 0$. This bound behaves very poorly in $k$, and cannot be compensated by the $k!$ in the denominator of $H_k(\zeta)$ in (\ref{S1KthSum}). Faced with this difficulty, the authors \cite{OQR} suggested to deform $\gamma_{-1,0}^k$ to certain $k$-dependent contours $\overline{\gamma}_k^k$, and for general $\zeta \in \mathbb{C} \setminus [0, \infty)$ also deform $(1/2 + \i \mathbb{R})^k$ to certain $k$-dependent contours $\overline{D}_k^k$. Over the contours $\overline{\gamma}^k_k$ and $\overline{D}_k^k$ one can obtain a favorable estimate for $|\prod_{1 \leq a < b \leq k} \mathfrak{h}(w_a, w_b; s_a,s_b) |$. Unfortunately, what was not realized in \cite{OQR} is that over these deformed contours other parts of the integrands in the definitions of $ H_k^N(\zeta; n_1, \dots, n_k)$ and $H_k(\zeta)$ behave badly and become difficult to control.

The way we overcome the growth of $|\prod_{1 \leq a < b \leq k} \mathfrak{h}(w_a, w_b; n_a,n_b) |$ as a function of $k$, is to recognize that one has some decay built into the Cauchy determinant in the definitions of $ H_k^N(\zeta; n_1, \dots, n_k)$ and $H_k(\zeta)$. This decay is sufficient to control the series in (\ref{S1QLT}) and (\ref{S2S2E3}), only when $\tau$ is sufficiently small -- see (\ref{S1TauSmall}). In fact, it still remains a difficulty for us to prove that the right side of (\ref{S1QLT}) is convergent and hence well-defined for general $\tau \in (0,1)$. 
\end{remark}

%
%
\subsection{Proof of Lemmas \ref{S2AnalyticFD} and \ref{S2MellinBarnes}}\label{Section2.3} In this section we give the proofs of the two key lemmas from Section \ref{Section2.1}.

\begin{proof}[Proof of Lemma \ref{S2AnalyticFD}] Let us fix a compact set $K \subset \mathbb{C} \setminus [0, \infty)$ and $\zeta \in K$. It follows from (\ref{S2BoundSine}) that there are constants $C_1,c_1 > 0$, depending on $K$ alone, such that for all $s = 1/2 + \i y  \in 1/2 + \i \mathbb{R}$ 
\begin{equation}\label{S2S3E1}
\left|\frac{\pi (-\zeta)^{s}}{\sin(-\pi s)} \right| \leq C_1 e^{-c_1 |y|}.
\end{equation}
Combining (\ref{S2S3E1}) with (\ref{S1S1E4}) we conclude that the integral in (\ref{S1KthSum}) is well-defined and finite. Moreover, we see that $H_k(\zeta)$ is the uniform over $\zeta \in K$ limit of
\begin{equation*}
\begin{split}
&G^N_k(\zeta):= \frac{1}{k! (2\pi \i)^{2k}} \cdot \int_{(1/2 + \i [-N, N])^k} d\vec{s} \oint_{\gamma_{-1,0}^k} d\vec{w} F(\zeta; \vec{s}, \vec{w}) \mbox{ as $N \rightarrow \infty$.}
\end{split}
\end{equation*}

The integrand in $G_k^N(\zeta)$ is analytic in $\zeta$ on $\mathbb{C} \setminus [0,\infty)$ and is jointly continuous in $\zeta$ and $\vec{w}, \vec{s}$, while the contours we are integrating over are compact. The latter implies that $G_k^N(\zeta)$ is analytic in $\zeta$ for each $N$, see e.g. \cite[Theorem 5.4]{Stein}. We thus conclude that $H_k(\zeta)$ is analytic as the uniform over compact sets limit of analytic functions, cf. \cite[Chapter 2, Theorem 5.2]{Stein}. \\

Using that for $\lambda > 0$ we have $\int_{\mathbb{R}} e^{-\lambda |x|} dx = 2\lambda^{-1}$, that the length of $\gamma_{-1, 0}$ is $2\pi \tau^{-1/8}$ as well as (\ref{S1S1E4}) and (\ref{S2S3E1}) we conclude that 
$$\left| H_k(\zeta) \right|  \leq \left( \frac{C_1 A \tau^{-1/8}}{ \pi c_1} \right)^k \cdot \rho^{\binom{k}{2}}.$$
The right side above is summable over $k$, since $\rho \in (0,1)$ in view of (\ref{S1TauSmall}). The latter implies that the series in (\ref{S1QLT}) is absolutely convergent. Moreover, since each summand is analytic in $\zeta$ on $\mathbb{C} \setminus [0,\infty)$, we conclude the same is true for the series by \cite[Chapter 2, Theorem 5.2]{Stein}. 
\end{proof}

\begin{proof}[Proof of Lemma \ref{S2MellinBarnes}] From (\ref{S1S1E4}) and the fact that the length of $\gamma_{-1, 0}$ is $2\pi \tau^{-1/8}$ we conclude that the $(n_1, \dots, n_k)$-th summand in (\ref{S1Sum}) is bounded in absolute value by 
$$|\zeta|^{n_1 + \cdots + n_k} \cdot A^k \cdot \rho^{\binom{k}{2}} \cdot \tau^{-k/8} \cdot (2\pi)^{-k},$$
and the latter is summable over $(n_1, \dots, n_k) \in \mathbb{N}^k$ (since $|\zeta| < 1$ by assumption), proving the absolute convergence of the series in (\ref{S1Sum}).

To conclude the proof of the lemma, it suffices to show
\begin{equation}\label{S2S3E2}
\begin{split}
&\lim_{N \rightarrow \infty} \sum_{n_1 = 1}^{N} \cdots \sum_{ n_k = 1}^{N} \frac{\zeta^{n_1 + \cdots + n_k}}{k! (2\pi \i)^k} \oint_{\gamma_{-1,0}^k}d\vec{w} F(\vec{n}, \vec{w}) = H_k(\zeta).
\end{split}
\end{equation}

Let $R_N = 1/2 + N$ and set $A^1_N = 1/2 - \i R_N$, $A^2_N = 1/2 + \i R_N$, $A^3_N = R_N + \i R_N$, $A^4_N = R_N - \i R_N$. Denote by $\gamma_N^1$ the contour, which goes from $A_N^1$ vertically up to $A^2_N$, by $\gamma_2^N$ the contour, which goes from $A^2_N$ horizontally to $A_N^3$, by $\gamma_N^3$ the contour, which goes from $A_N^3$ vertically down to $A_N^4$ and by $\gamma_N^4$ the contour, which goes from $A^4_N$ horizontally to $A_N^1$. Also let $\gamma_N = \cup_{i = 1}^4 \gamma^i_N$ traversed in order, see Figure \ref{S2_1}.
\begin{figure}[h]
\centering
\scalebox{0.6}{\includegraphics{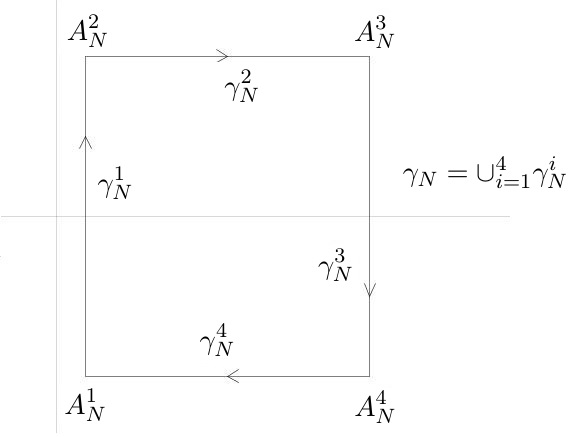}}
\caption{The contours $\gamma_N^i$ for $i = 1,\dots,4$.}
\label{S2_1}
\end{figure}

We observe by the Residue Theorem that for each $N \in \mathbb{N}$
\begin{equation}\label{S2S3E3}
\begin{split}
& \sum_{n_1 = 1}^{N} \cdots \sum_{ n_k = 1}^{N} \frac{\zeta^{n_1 + \cdots + n_k}}{k! (2\pi \i)^k} \oint_{\gamma_{-1,0}^k}d\vec{w} F(\vec{n}, \vec{w}) =  \frac{1}{k! (2\pi \i)^{2k} } \oint_{\gamma_{-1,0}^k} d\vec{w}  \int_{(\gamma_N)^k} d\vec{s} F(\zeta; \vec{s}, \vec{w}),
\end{split}
\end{equation}
where we recall that $F(\zeta; \vec{s}, \vec{w})$ was defined in (\ref{S1SpecFun}). In deriving the last expression we used that in each variable $s_a$ for $a = 1, \dots, k$ the function $F(\zeta; \vec{s}, \vec{w})$ is analytic in the region enclosed by $\gamma_N$ except at the points $s_a \in \{1, \dots, N \}$ where the function has a simple pole coming from $\frac{\pi }{\sin(-\pi s_a)}$, the fact that 
$$\mathsf{Res}_{z = m} \frac{\pi }{\sin(-\pi s_a)} = (-1)^{m+1},$$
and also that $\gamma_N$ is {\em negatively} oriented.

We next note by the dominated convergence theorem that 
\begin{equation}\label{S2S3E4}
\begin{split}
&\lim_{N \rightarrow \infty} \frac{1}{k! (2\pi \i)^{2k} } \oint_{\gamma_{-1,0}^k} d\vec{w}  \int_{(\gamma^1_N)^k} d\vec{s} F(\zeta; \vec{s}, \vec{w})  = H_{k}(\zeta).
\end{split}
\end{equation}
In deriving the last statement we used (\ref{S1S1E4}) and (\ref{S2S3E1}), which justify the dominated convergence theorem with dominating function $(C_1 A)^k \rho^{\binom{k}{2}} \prod_{a = 1}^k e^{-c_1 |y_a|}$ (here $C_1, c_1$ are as in (\ref{S2S3E1}) and we have written $s_a = x_a + \i y_a$ for $ a = 1, \dots, k$).

In addition, we have from (\ref{S2BoundSine}) and (\ref{S1S1E4}), the fact that the length of $\gamma_{0,-1}$ is $2\pi \tau^{-1/8}$, the length of $\gamma_N^{r}$ is $N$ for $r = 2,4$ and $|\zeta| < 1$ that for $r=2,4$
\begin{equation}\label{S2S3E5}
\begin{split}
& \left| \frac{1}{k! (2\pi \i)^{2k} } \oint_{\gamma_{-1,0}^k} d\vec{w}  \int_{(\gamma^r_N)^k} d\vec{s} F(\zeta; \vec{s}, \vec{w})  \right| \leq \frac{A^k \rho^{\binom{k}{2}}  N^k \tau^{-k/8} \cdot   }{(2\pi)^k} \cdot c'^ke^{-k\pi N}.
\end{split}
\end{equation}
Using also that the length of $\gamma_N^3$ is $2N + 1$, and the same statements as above we get
\begin{equation}\label{S2S3E6}
\begin{split}
& \left| \frac{1}{k! (2\pi \i)^{2k} } \oint_{\gamma_{-1,0}^k} d\vec{w}  \int_{(\gamma^3_N)^k} d\vec{s} F(\zeta; \vec{s}, \vec{w})  \right| \leq  |\zeta|^{k(N+1/2)}  \frac{A^k \rho^{\binom{k}{2}}  (2N + 1)^k \tau^{-k/8}}{(2\pi)^k} \cdot c'^k.
\end{split}
\end{equation}
We mention that in (\ref{S2S3E5}) and (\ref{S2S3E6}) the constant $c'$ is as in (\ref{S2BoundSine})  for $c = 1/2$, and we used that $\gamma_N^i$ are at least distance $1/2$ from $\mathbb{Z}$ by construction. 

Since $|\zeta| < 1$ by assumption, we have that the right sides of (\ref{S2S3E5}) and (\ref{S2S3E6}) both converge to zero as $N \rightarrow \infty$. Combining (\ref{S2S3E3}), (\ref{S2S3E4}), (\ref{S2S3E5}) and (\ref{S2S3E6}) we conclude (\ref{S2S3E2}), which concludes the proof of the lemma.

\end{proof}

%
%
\section{Weak convergence}\label{Section3} The goal of this section is to prove Theorem \ref{mainThm}, for which we need to study equation (\ref{S1QLT}) in Theorem \ref{S1Thm1} as $t \rightarrow \infty$. In Seciton \ref{Section3.1} we explain how we need to scale the parameters in (\ref{S1QLT}), and formulate two key asymptotic statements about the summands $H_{k}(\zeta)$ -- see Propositions \ref{S3MainP1} and \ref{S3MainP2}. In Section \ref{Section3.2} we use these two propositions to complete the proof of Theorem \ref{mainThm}. In Section \ref{Section3.3} we present a useful way to rewrite $H_{k}(\zeta)$, which will help us establish Propositions \ref{S3MainP1} and \ref{S3MainP2}, whose proofs are given in Sections \ref{Section4} and \ref{Section5}, respectively. Throughout this section we continue with the same notation as in Sections \ref{Section1} and \ref{Section2}.

%
%
\subsection{Two key propositions}\label{Section3.1} In this section we formulate the key results we require in the proof of Theorem \ref{mainThm} in Section \ref{Section3.2}. We begin by stating our assumptions on parameters and their scaling.
\begin{definition} \label{DefScale} We assume the same notation as in Definition \ref{DefASEP}. We further fix $\tilde{r}, \alpha \in \mathbb{R}$, and for $t > 0$ define
\begin{equation}\label{S3ZetaScale}
\zeta = - (1-\tau)^{-1}\tau^{- (1/4)t - (1/2)( \lfloor t^{2/3}\alpha \rfloor - 1 )  + t^{1/3} \tilde{r} }.
\end{equation}
With this data we let $I(k,t) = H_{k}(\zeta)$, where $H_{k}(\zeta)$ is as in (\ref{S1KthSum}) with $p,q,\tau, \zeta$ as just specified, $x = \lfloor  t^{2/3} \alpha \rfloor$ and $t$ replaced with $t/\gamma$. We mention that $I(k,t) $ is well-defined and finite in view of Lemma \ref{S2AnalyticFD} and the fact that $\zeta$ from (\ref{S3ZetaScale}) lies in $(-\infty, 0)$.
\end{definition}

The first key proposition we require is as follows.
\begin{proposition}\label{S3MainP1} Assume the same notation as in Definition \ref{DefScale}. For $k \in \mathbb{N}$
\begin{equation}\label{S3KE1}
\lim_{t \rightarrow \infty} I(k,t) =  \frac{1}{k!(2\pi \i)^{2k}} \int_{C_{0,\pi/4}^k}\hspace{-3mm} d\vec{u}  \int_{C_{-1,3\pi/4}^k}  \hspace{-3mm} d\vec{v} \det \left[ \frac{2v_a e^{u_a^3/48 - u_a^2 \alpha/ 8 - u_a \tilde{r}} }{(u_a-v_a)(v_b^2 - u_a^2)e^{v_a^3/48 - v_a^2 \alpha / 8 - v_a \tilde{r}}}\right]_{a,b = 1}^k,
\end{equation}
where $C_{a,\phi}$ is as in Definition \ref{S1ContV}.
\end{proposition}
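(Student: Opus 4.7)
The plan is to carry out a steepest descent analysis of the $2k$-fold contour integral $H_k(\zeta) = \frac{1}{k!(2\pi \i)^{2k}} \int_{(1/2 + \i \mathbb{R})^k} d\vec{s} \oint_{\gamma_{-1,0}^k} d\vec{w}\, F(\zeta; \vec{s}, \vec{w})$ under the scaling of Definition \ref{DefScale}. The very first step will be to replace this expression by the alternative representation of $H_k(\zeta)$ promised in Section \ref{Section3.3}: one expects it to combine the $\pi/\sin(-\pi s_a)$ factor and the $(-\zeta)^{s_a}$ factor into a single exponential-type kernel, and to expose a joint exponential action $e^{G_t(\vec{w},\vec{s})}$ times lower-order ($\mathfrak{g}$, $\mathfrak{h}$, Cauchy) pieces, in a form amenable to Taylor expansion near a critical point.

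The critical point should be $(w,s)=(-1,0)$. With $\zeta$ given by (\ref{S3ZetaScale}), $x=\lfloor t^{2/3}\alpha\rfloor$ and time $t/\gamma$, the contribution of $\mathfrak{f}(w;s)$ and $(-\zeta)^s$ to the action is an exponential that one checks vanishes together with its first and second $(w,s)$-derivatives at $(-1,0)$, leaving a non-degenerate cubic. This dictates the local change of variables $w_a = -1 + c_1 t^{-1/3} v_a$, $s_a = c_2 t^{-1/3} u_a$ for constants $c_1,c_2$ chosen so that the Taylor expansion of the action reproduces $\Psi(u_a)-\Psi(v_a)$ with $\Psi(z)=z^3/48 - z^2\alpha/8 - z\tilde{r}$, and so that the steepest-descent directions for the rescaled $v_a$ (respectively $u_a$) are precisely $\pm 3\pi/4$ (respectively $\pm\pi/4$), matching the target contours $C_{-1,3\pi/4}$ and $C_{0,\pi/4}$. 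Under the same rescaling, one expects $\pi/\sin(-\pi s_a)$ to produce $t^{1/3}/u_a$ (which, together with the Jacobian, gives an overall $t$-independent prefactor), the Cauchy determinant $\det[-1/(w_a\tau^{s_a}-w_b)]$ to degenerate to $\det[1/(u_a-v_a)]$-type structure, and the $\mathfrak{g}, \mathfrak{h}$ ratios of $q$-Pochhammer symbols to converge to an explicit rational function that, combined with everything else, reorganizes into $\det\bigl[2v_a/((u_a-v_a)(v_b^2-u_a^2))\bigr]$.

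The argument then splits into the usual two pieces. Pointwise convergence of the rescaled integrand to the target on compact subsets of the limiting contours is verified by direct Taylor expansion of $\mathfrak{f},\mathfrak{g},\mathfrak{h}$ and the sine factor. To exchange limit and integral one must (i) deform the original contours $\gamma_{-1,0}^k$ and $(1/2+\i\mathbb{R})^k$ to contours that coincide with the rescaled $C_{-1,3\pi/4}$ and $C_{0,\pi/4}$ in a neighborhood of the critical point, and (ii) exhibit a $t$-independent integrable dominating function. Away from the critical point, the real part of the cubic action becomes strictly negative along the descent contours, and the $\sin$ factor contributes additional exponential decay along the vertical direction, so the tails can be made uniformly small; the local region is handled by the explicit cubic bound $\Re(\Psi(u)-\Psi(v))\le -c(|u|^3+|v|^3)$ for large $|u|,|v|$ on the respective contours.

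The main obstacle will be step (i): constructing, for all sufficiently large $t$, a simultaneous deformation of both $\gamma_{-1,0}$ and the vertical $s$-contour that (a) avoids the poles coming from the infinite product factors in $\mathfrak{g}$ and $\mathfrak{h}$ as well as the zero locus of $1+\tau^s w$, (b) realizes the desired steepest descent directions in a $t^{-1/3}$-neighborhood of $(-1,0)$, and (c) enjoys a uniform-in-$t$ sign on $\Re G_t$ on the portion away from that neighborhood. A secondary difficulty is replacing the crude $e^{\tilde{c}k^2}$ bounds from Section \ref{Section2.1} on $\prod_{a<b}\mathfrak{h}(w_a,w_b;s_a,s_b)$ by pointwise bounds that survive after the rescaling and are integrable against the cubic decay, so that dominated convergence genuinely applies for each fixed $k$.
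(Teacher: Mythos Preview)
Your overall strategy---rewrite $H_k(\zeta)$, locate a saddle, deform contours, Taylor-expand locally, and dominate globally---is the right architecture and matches the paper. But the plan as written has a concrete error that would derail the analysis: the critical point is at $w=1$, not $w=-1$. The point $w=-1$ is a \emph{pole} of the integrand (from the $1/(1+w)$ in $\mathfrak{f}$), so your proposed change of variables $w_a=-1+c_1t^{-1/3}v_a$ sends the action to $\pm\infty$ rather than producing a finite cubic. If you write the leading action as $\frac{t}{1+w}-\frac{t}{1+\tau^s w}-\tfrac14 t\,s\log\tau$ and substitute $z=\tau^s w$, it becomes $t[F(\log w)-F(\log z)]$ with $F(x)=\frac{1}{1+e^x}+\frac{x}{4}$, whose unique critical point is $x=0$, i.e.\ $w=z=1$ (equivalently $s=0$).

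The paper implements exactly this: Lemma~\ref{S3LRewrite} performs the substitution $z_a=w_a\tau^{s_a}$, turning the vertical $s$-integral into a $z$-integral over a circle and packaging $\pi/\sin(-\pi s)$ together with $(-\zeta)^s$ into the function $S(w,z;u,\tau)$ of Definition~\ref{DefFunS}. Both the $w$- and $z$-circles are then deformed to contours that, under $w=e^{t^{-1/3}u}$, $z=e^{t^{-1/3}v}$, become $t^{1/3}$-dilates of the descent contours $\gamma^+_{0,\epsilon}$ and $\gamma^-_{-t^{-1/3},\epsilon}$ of Definition~\ref{S4Contours}; the delicate point in this deformation is avoiding the poles at $z_a=w_b^{-1}$, which is why the $z$-contour is placed strictly inside the unit circle. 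A second ingredient missing from your sketch is the algebraic identity of Lemma~\ref{DetFor1}: the Cauchy determinant $\det[1/(w_a-z_b)]$ combined with the factors $(1-w_az_b)^{-1}$ pulled out of $G(\vec w,\vec z)$ collapses to $\det[1/((w_a-z_b)(1-w_az_b))]$, and it is \emph{this} object that scales to $\det[1/(v_b^2-u_a^2)]$. The diagonal $1/(u_a-v_a)$ then comes from the $m=0$ term of $S$, and the factor $-2v_a$ from $(1-e^{2t^{-1/3}v_a})$ inside $(z_a^2;\tau)_\infty$. Without this repackaging, the raw Cauchy determinant $\det[-1/(w_a\tau^{s_a}-w_b)]$ alone does not limit to the target structure.
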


The second key proposition we require is as follows.
\begin{proposition}\label{S3MainP2} Assume the same notation as in Definition \ref{DefScale}. There exists $\tau_0 \in (0,1)$ sufficiently small, so that the following holds. If $\tau \in (0, \tau_0]$, we can find constants $A, T > 0$, depending on $\tau, \tilde{r},\alpha$, such that for $t \geq T$ and $k \in \mathbb{N}$
\begin{equation}\label{S3KE2}
\left| I(k,t) \right| \leq A^k \cdot k^{-k/2}.
\end{equation}
\end{proposition}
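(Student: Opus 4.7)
The bound $A^k k^{-k/2}$ provides super-exponential decay in $k$, which is precisely what is needed to pass the $t \to \infty$ limit through the series $\sum_k I(k,t)$ in the proof of Theorem \ref{mainThm} via dominated convergence, in combination with Proposition \ref{S3MainP1}. Following the outline of Section \ref{Section5}, the plan is to split the analysis into two regimes, ``small'' $k$ and ``large'' $k$ relative to a threshold $k_0(t)$ (a polynomial in $t^{1/3}$ is a natural candidate), and to work with different integration contours in each regime.

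In the small-$k$ regime, I would deform $\gamma_{-1,0}^k$ and $(1/2 + \i \mathbb{R})^k$ to steepest descent contours near the critical point of the ``action'' coming from $(-\zeta)^{s_a} \cdot \mathfrak{f}(w_a; s_a)$. The factors $(1-\tau)^{\pm s_a}$ in $(-\zeta)^{s_a}$ and $\mathfrak{f}$ cancel, leaving an exponent whose leading $t$-behavior, once $x = \lfloor t^{2/3}\alpha\rfloor$ and $\zeta$ are substituted from (\ref{S3ZetaScale}), has a saddle at $\tau^{s_a} w_a = 1$. Under the rescaling $u_a = c_1 t^{1/3}(w_a - 1)$ and $v_a = c_2 t^{1/3} s_a$ suggested by (\ref{S3KE1}), the deformed contours should approximate $C_{0,\pi/4}$ and $C_{-1,3\pi/4}$, and the integrand concentrates near the critical point where it is bounded by an absolute constant. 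Combining this with the pointwise bounds on $\mathfrak{g}, \mathfrak{h}$ and the Cauchy determinant as in (\ref{S1S1E4}), one obtains $|I(k,t)| \leq A^k \rho^{\binom{k}{2}}$ with $A$ independent of $t$. Since $\rho \in (0,1)$ by (\ref{S1TauSmall}), we have $\rho^{\binom{k}{2}} = e^{-|\log \rho| k(k-1)/2} \leq A_1^k k^{-k/2}$ for an appropriate $A_1$ depending on $\rho$, which yields the desired bound in this regime.

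In the large-$k$ regime $k > k_0(t)$, the steepest descent is no longer necessary. Instead, I would keep the contours essentially as in (\ref{S1KthSum}) and rely on the $\zeta$-weighted version of (\ref{S1S1E4}), together with $|\pi(-\zeta)^{s_a}/\sin(-\pi s_a)| \leq C_1 |\zeta|^{1/2} e^{-c_1 |\Im s_a|}$ on $1/2 + \i \mathbb{R}$. On the original contour $\gamma_{-1,0}$ the factor $\mathfrak{f}$ contributes at most $e^{C_2 t}$ uniformly in $s$ with $\Re(s) = 1/2$, so the bound takes the schematic form $|I(k,t)| \leq A^k \cdot e^{C(t)\, k} \cdot \rho^{\binom{k}{2}}$, where $C(t) = O(t)$. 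Once $k > k_0(t)$ is polynomial in $t^{1/3}$ (large enough that $\rho^{k(k-1)/2}$ beats $e^{C(t)k}$), the super-exponential $\rho^{\binom{k}{2}}$ absorbs the $t$-dependent factor and one recovers a clean $A^k k^{-k/2}$ bound. The threshold $k_0(t)$ must be chosen carefully: large enough that the large-$k$ estimate kicks in, but small enough that steepest descent yields a $t$-uniform prefactor in the small-$k$ regime.

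The main obstacle I anticipate is the small-$k$ analysis. The estimate (\ref{S1S1E2}) on $|\mathfrak{h}(w_1,w_2;s_1,s_2)|$ is proved specifically for $w_i \in \gamma_{-1,0}$, and any significant deformation of contours will require a generalized $q$-Pochhammer bound that remains valid on the deformed curves while still producing the $\rho$ factor of (\ref{S1S1E4}). Simultaneously one must (a) track the pointwise behavior of $(-\zeta)^{s_a}\mathfrak{f}(w_a;s_a)$ uniformly in $t$ along the deformed contours, (b) avoid crossing poles of $\sin(-\pi s_a)^{-1}$ and of the Cauchy determinant, and (c) control the steepest-descent error uniformly in $k$ up to $k_0(t)$ rather than for fixed $k$, since the standard Laplace method handles only fixed dimension. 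Stitching together the small-$k$ and large-$k$ estimates cleanly at the threshold $k = k_0(t)$ is where the most care will be required.
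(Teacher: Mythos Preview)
Your two-regime strategy matches the paper's Section \ref{Section5}, and your large-$k$ argument is close to what is done there. The threshold, however, must be $k_0(t)\sim t$, not $t^{1/3}$ or $t^{2/3}$: since $|\zeta|^{1/2}\sim\tau^{-t/8}$, the factors $(-\zeta)^{s_a}$ on $\Re(s_a)=1/2$ contribute $e^{c\,t\,k}$, and to have $\rho^{\binom{k}{2}}$ dominate you need $k\gtrsim t$. The paper takes $k_0=t$ exactly, and for $k\ge t$ it deforms to the circles $|w|=e$, $|z|=\tau^{3/4}$ (not $\gamma_{-1,0}$), extracting $[2\tau^{3/8}]^{k^2}$ from the Cauchy determinant (Lemma \ref{S61L5}) and $B_6^{k^2}$ from the cross term (Lemma \ref{S61L6}); the smallness assumption $4\tau^{1/8}B_6\le 1$ in (\ref{S5IneqTau}) then gives $2^{-k^2}$, which absorbs the $\tau^{-k^2/4}$ coming from $\tilde A_1$.

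The genuine gap is in your small-$k$ plan: you cannot get the $\rho^{\binom{k}{2}}$ of (\ref{S1S1E4}) on steepest descent contours. That decay came from Lemma \ref{DetBounds}(ii) applied with $r/R=|\tau^{s_a}w_a|/|w_b|=\tau^{1/2}$ on $\gamma_{-1,0}$; near the saddle the relevant ratio is $\approx 1$, the Cauchy determinant entries are of size $t^{1/3}$, and Hadamard only yields $k^{k/2}$. Meanwhile the naive pointwise bound on $\prod_{a<b}\mathfrak h$ still grows like $e^{c k^2}$, so combining the two is useless. The paper's mechanism is different and is the missing idea: after the change of variables of Section \ref{Section4.1}, Lemma \ref{S5CTBound} Taylor-expands the cross term $B_2$ as
\[
|B_2|\le \exp\Bigl(-A(\tau)\,t^{-2/3}\,\Re\Bigl[\Bigl(\textstyle\sum_a(u_a-v_a)\Bigr)^2\Bigr]\Bigr)\cdot \exp(\text{cubic errors absorbable by }A_1),
\]
and (\ref{S5S3E2}) shows that on the steepest descent contours this quadratic form has nonnegative real part (up to terms controlled by the cubic decay of $A_1$). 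Thus $|B_2|$ contributes no $e^{c k^2}$ at all; the integrand then factorizes into $k$ one-dimensional integrals bounded by a constant (this is (\ref{S5IneqA})), Hadamard gives $k^{k/2}$ from $B_1$, and the $1/k!$ turns this into $k^{-k/2}$. Your obstacle paragraph correctly flags that (\ref{S1S1E2}) is contour-specific, but the resolution is not a generalized $q$-Pochhammer bound yielding the same $\rho$; it is this sign argument on the quadratic form.
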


We end this section with the following elementary probability lemma from \cite{BorCor}, which will also be required in our arguments. We mention that analogues of the below lemma have been known for a while in the physics literature, see e.g. \cite[Equation (14)]{CDR10}. 
\begin{lemma}\label{prob}
\cite[Lemma 4.39]{BorCor}. Suppose that $f_n$ is a sequence of functions $f_n: \mathbb{R} \rightarrow [0,1]$, such that for each $n$, $f_n(y)$ is strictly decreasing in $y$ with a limit of $1$ at $y = -\infty$ and $0$ at $y = \infty$. Assume that for each $\delta > 0$ one has on $\mathbb{R}\backslash [-\delta,\delta]$, $f_n \rightarrow {\bf 1}_{\{y < 0\}}$ uniformly. Let $X_n$ be a sequence of random variables such that for each $x \in \mathbb{R}$ 
$$\mathbb{E}[f_n(X_n - x)] \rightarrow p(x),$$
and assume that $p(x)$ is a continuous probability distribution function. Then $X_n$ converges in distribution to a random variable $X$, such that $\mathbb{P}(X \leq x) = p(x)$.
\end{lemma}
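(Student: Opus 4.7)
The plan is to convert the hypothesized convergence of the expectations $\mathbb{E}[f_n(X_n - x)]$ into pointwise convergence of the cumulative distribution functions $F_n(x) := \mathbb{P}(X_n \leq x)$ to $p(x)$ at every $x \in \mathbb{R}$. Once that is in hand, the continuity of $p$ together with the hypothesis that $p$ is a probability distribution function gives, via the usual quantile construction on $((0,1), \text{Leb})$, a random variable $X$ with $\mathbb{P}(X \leq x) = p(x)$; since $p$ is continuous, pointwise convergence $F_n \to p$ everywhere is exactly the definition of $X_n \Rightarrow X$.

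The core of the argument is a two-sided sandwich for $F_n(x)$ obtained by evaluating the hypothesis at slightly shifted points. For fixed $x \in \mathbb{R}$ and $\delta > 0$, I would first analyze $\mathbb{E}[f_n(X_n - (x + \delta))]$ by splitting the expectation over the events $\{X_n \leq x\}$ and $\{X_n > x\}$. On the first event one has $X_n - (x+\delta) \leq -\delta$, so by the uniform convergence $f_n \to 1$ on $(-\infty, -\delta/2)$ (using $\delta/2$ rather than $\delta$ to sidestep the endpoint) the integrand is at least $1 - \epsilon_n$ with $\epsilon_n \downarrow 0$. Dropping the nonnegative contribution from $\{X_n > x\}$ then yields $\mathbb{E}[f_n(X_n - (x+\delta))] \geq (1 - \epsilon_n) F_n(x)$, and letting $n \to \infty$ with the hypothesis gives $\limsup_n F_n(x) \leq p(x+\delta)$. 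A parallel split of $\mathbb{E}[f_n(X_n - (x-\delta))]$, this time using $f_n \to 0$ uniformly on $(\delta/2, \infty)$ to bound the contribution from $\{X_n > x\}$ and the trivial bound $f_n \leq 1$ on $\{X_n \leq x\}$, produces $\mathbb{E}[f_n(X_n - (x-\delta))] \leq F_n(x) + \epsilon_n$, hence $p(x - \delta) \leq \liminf_n F_n(x)$.

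Combining the two bounds yields $p(x - \delta) \leq \liminf_n F_n(x) \leq \limsup_n F_n(x) \leq p(x + \delta)$ for every $\delta > 0$; sending $\delta \to 0^+$ and using the assumed continuity of $p$ would collapse the sandwich to $\lim_n F_n(x) = p(x)$, which is the required statement. I do not anticipate any genuine obstacle here: the proof is essentially routine, and the one mild technical subtlety — that the phrase ``uniform on $\mathbb{R} \setminus [-\delta, \delta]$'' is stated on an open set and thus does not directly control $f_n$ at the endpoints $\pm \delta$ themselves — is dealt with trivially by applying the hypothesis at $\delta/2$ instead, as above.
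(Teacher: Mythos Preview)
Your argument is correct and is the standard sandwich proof of this result. Note, however, that the paper does not actually supply its own proof of this lemma: it is stated with a citation to \cite[Lemma 4.39]{BorCor} and used as a black box, so there is nothing in the paper to compare against beyond observing that your proof is precisely the one that appears in the cited reference.
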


%
%
\subsection{Proof of Theorem \ref{mainThm}}\label{Section3.2} In this section we present the proof of Theorem \ref{mainThm}. For clarity, we split the proof into two steps.\\

{\bf \raggedleft Step 1.} Let $\tau_0$ be as in Proposition \ref{S3MainP2}, and let $\tilde{\tau} \in (0,1)$ be sufficiently small so that $\tilde{\tau} \leq \tau_0$ and for $\tau \in (0, \tilde{\tau}]$ we have that (\ref{S1TauSmall}) holds. This specifies $\tilde{\tau}$ in the statement of the theorem. In the remainder we fix $\tau \in (0, \tilde{\tau}]$ and a sequence $t_n > 0$, such that $t_n \uparrow \infty$ as $n \rightarrow \infty$.

For $y \in \mathbb{R}$ and $n \in \mathbb{N}$ we define 
$$f_n(y) = \frac{1}{( - \tau^{-t_n^{1/3} y}; \tau)_{\infty}}.$$
We further introduce the random variables
\begin{equation}\label{LS1}
X_n = t_n^{-1/3} \cdot \left(t_n/4 + (1/2)( \lfloor t_n^{2/3}\alpha \rfloor - 1 ) - N_{\lfloor  t_n^{2/3} \alpha \rfloor}(t_n/\gamma)  \right),
\end{equation}
where $N_x(t)$ is as in (\ref{DefN}). We claim that for each $\tilde{r} \in \mathbb{R}$ we have
\begin{equation}\label{LS2}
\lim_{n \rightarrow \infty} \Ehf \left[ f_n (X_n - \tilde{r}) \right] = \mathbb{P}\left(  \mathcal{A}_{2 \rightarrow 1} (2^{-1/3} \alpha) \leq 2^{4/3} \cdot \tilde{r} + {\bf 1} \{\alpha \leq 0\} \cdot 2^{-2/3} \alpha^2 \right),
\end{equation}
where $\mathcal{A}_{2 \rightarrow 1}$ is as in Definition \ref{Airy21}. We prove (\ref{LS2}) in Step 2. Here, we assume its validity and conclude the proof of the theorem.\\

From \cite[Lemma 5.1]{FerVet} we have that 
\begin{equation}\label{seqfeq}
h_q(y) := \frac{1}{(- \tau^{-qy}; \tau)_\infty} = \prod_{k = 1}^\infty \frac{1}{1 + \tau^{-qy + k}}
\end{equation}
is strictly decreasing for all $q > 0$. Moreover, for each $\delta > 0$ one has $h_q(y) \rightarrow {1}_{\{y < 0\}}$ uniformly on $\mathbb{R} \backslash [-\delta, \delta]$ as $q \rightarrow \infty$. The latter implies that $f_n$ satisfy the conditions of Lemma \ref{prob}. We further note that the right side of (\ref{LS2}) is continuous in $\tilde{r}$ from the second part of Lemma \ref{S1LWD}. In particular, we see that the conditions of Lemma \ref{prob} are all satisfied, and so 
\begin{equation}\label{LS3}
\begin{split}
&\lim_{n \rightarrow \infty} \Phf \left( t_n^{-1/3} \cdot \left(t_n/4 + (1/2)( \lfloor t_n^{2/3}\alpha \rfloor - 1 ) - N_{\lfloor  t_n^{2/3} \alpha \rfloor}(t_n/\gamma)  \right) \leq \tilde{r}  \right) \\
&= \mathbb{P}\left(  \mathcal{A}_{2 \rightarrow 1} (2^{-1/3} \alpha) \leq 2^{4/3} \cdot \tilde{r} + {\bf 1} \{\alpha \leq 0\} \cdot 2^{-2/3} \alpha^2 \right).
\end{split}
\end{equation}
Combining (\ref{DefH}) and (\ref{LS3}), we readily deduce (\ref{mainLE1}) with $y = 2\tilde{r} + (1/2){\bf 1} \{\alpha \leq 0\} \cdot \alpha^2 $.\\

{\bf \raggedleft Step 2.} In this step we prove (\ref{LS2}). From Theorem \ref{S1Thm1} (here we use that $\tau \in (0, \tilde{\tau}]$) and Definition \ref{DefScale} we know that 
\begin{equation}\label{LS4}
\Ehf \left[ f_n (X_n - \tilde{r}) \right] = 1 + \sum_{k = 1}^\infty I(k, t_n).
\end{equation}
We observe that each summand in (\ref{LS4}) converges by Proposition \ref{S3MainP1}, and also from Proposition \ref{S3MainP2} we may exchange the order of the series and the limit by the dominated convergence theorem with dominating series $A^k \cdot k^{-k/2}$. Consequently, we conclude from Propositions \ref{S3MainP1} and \ref{S3MainP2} that 
\begin{equation*}
\begin{split}
\lim_{n \rightarrow \infty} \Ehf \left[ f_n (X_n - \tilde{r}) \right] = 1 + \sum_{k = 1}^\infty & \frac{1}{k!(2\pi \i)^{2k}} \int_{C_{0,\pi/4}^k}\hspace{-3mm} d\vec{u}  \int_{C_{-1,3\pi/4}^k}  \hspace{-3mm} d\vec{v} \\
& \det \left[ \frac{2v_a e^{u_a^3/48 - u_a^2 \alpha/ 8 - u_a \tilde{r}} }{(u_a-v_a)(v_b^2 - u_a^2)e^{v_a^3/48 - v_a^2 \alpha / 8 - v_a \tilde{r}}}\right]_{a,b = 1}^k.
\end{split}
\end{equation*}
We may now apply the change of variables $w_a = - 2^{-4/3} v_a$, $z_a = -2^{-4/3}u_a$ for $a = 1,\dots, k$ to get
 \begin{equation}\label{LS5}
\begin{split}
\lim_{n \rightarrow \infty} \Ehf \left[ f_n (X_n - \tilde{r}) \right] = 1 + \sum_{k = 1}^\infty & \frac{1}{k!(2\pi \i)^{2k}} \int_{C_{0,3\pi/4}^k}\hspace{-3mm} d\vec{z}  \int_{C_{2^{-4/3},\pi/4}^k}  \hspace{-3mm} d\vec{w} \\
& \det \left[ \frac{2w_a e^{-z_a^3/3 - z_a^2 2^{-1/3} \alpha + z_a 2^{4/3} \tilde{r}} }{(w_a - z_a)( z_a^2- w_b^2)e^{-w_a^3/3 - w_a^2 2^{-1/3} \alpha +  w_a 2^{4/3} \tilde{r}}}\right]_{a,b = 1}^k.
\end{split}
\end{equation}
In the last integral we may deform the $C_{2^{-4/3},\pi/4}$ contours to $C_{1,\pi/4}$ without crossing any poles of the integrands, and thus without affecting the value of the integrals. The decay necessary to deform the contours near infinity comes from the cubic terms in the exponential functions. At this point, we see that the right side of (\ref{LS5}) agrees with the right side of the first line of (\ref{CrossFD2}) with $t_1 = 2^{-1/3} \alpha$ and $y_1 = 2^{4/3} \cdot \tilde{r} + {\bf 1} \{\alpha \leq 0\} \cdot 2^{-2/3} \alpha^2$. From (\ref{LS5}) and Lemma \ref{S1LWD} we conclude (\ref{LS2}).

%
%
\subsection{Change of variables}\label{Section3.3} In this section we rewrite the function $H_k(\zeta)$ from (\ref{S1KthSum}) in a way that is suitable for our asymptotic analysis in the next two sections, see Lemma \ref{S3LRewrite}. In order to state our new formula we require a bit of notation, and our exposition here follows \cite[Section 3]{ED2020}.

\begin{definition}\label{DefFunS}
Fix $\tau \in (0,1)$. Let $w,z, u \in \mathbb{C}$ be such that $zw\neq 0$, $|z| \neq \tau^n |w|$ for any $n \in \mathbb{Z}$ and $u \not \in [0, \infty)$. For such a set of parameters we define the function
\begin{equation}\label{SpiralDef}
S(w, z; u,\tau) =   \sum_{m \in \mathbb{Z}} \frac{\pi \cdot [ - u ]^{ [\log \tau]^{-1} [\log z - \log w -  2m \pi  \i]}}{\sin(-\pi [\log \tau ]^{-1} [\log z - \log w -  2m \pi  \i])},
\end{equation}
where everywhere we take the principal branch of the logarthm, i.e. if $v = re^{\i \theta}$ with $r > 0$ and $\theta \in (-\pi, \pi]$ we set $\log v = \log r + \i \theta$. Observe that 
$$\Re \left[ -\pi [\log \tau]^{-1} [\log z - \log w -  2m \pi  \i ]  \right] = -\pi \cdot \frac{\log|z| - \log |w|}{\log \tau } \not \in \pi \cdot \mathbb{Z},$$
which implies that each of the summands in (\ref{SpiralDef}) is well-defined and finite.

It follows from (\ref{S2BoundSine}) that the series in (\ref{SpiralDef}) is absolutely convergent as we explain here. Put $A = [\log z - \log w] [\log \tau]^{-1}$, $B = - 2\pi [\log \tau]^{-1}$ and $-u = R e^{\i \phi}$ with $\phi \in (-\pi, \pi)$. From our assumption that $|z| \neq \tau^n |w|$ for any $n \in \mathbb{Z}$ and (\ref{S2BoundSine}) we conclude that for any $m \in \mathbb{Z}$ 
$$\left| \frac{\pi \cdot [ - u ]^{[\log \tau ]^{-1}  [\log z - \log w -  2m \pi \i ]}}{\sin(-\pi [\log \tau ]^{-1} [\log z - \log w  -  2m \pi  \i ])} \right| \leq  \pi c' \cdot e^{ |\phi| | A| + |\log R| |A| } e^{(|\phi| - \pi) B |m| }.  $$
The above shows that the sum in (\ref{SpiralDef}) is absolutely convergent by comparison with the geometric series $e^{(|\phi| - \pi) B |m|}.$
\end{definition}
\begin{remark}
In equation (\ref{SpiralDef}) we chose the principal branch of the logarithm for expressing $\log w$ and $\log z$. However, we could have chosen different branches for $\log w$ and $\log z$, and note that then $[\log z - \log w] [\log \tau]^{-1}$ would shift by $2k \pi \i [\log \tau ]^{-1}$ for some $k \in \mathbb{Z}$. Since the sum in the definition of $S(w, z; u,\tau)$ is over $\mathbb{Z}$ we see that such a shift does not change the value of $S(w, z; u,\tau)$. So even though the logarithm is a multi-valued function for fixed $u,\tau$ the function $S(w, z; u,\tau) $ as a function of $z,w$ is single valued and well-defined as long as $|z| \neq \tau^{n} |w|$ for some $n \in \mathbb{Z}$.
\end{remark}

We next summarize the main properties we require for the function $S(w, z; u,t)$ from Definition \ref{DefFunS} in the following lemma. 
\begin{lemma}\label{S3Analyticity}\cite[Lemma 3.9]{ED2020} Fix $\tau \in (0,1)$ and $R, r \in (0, \infty)$ such that $R > r > \tau R$. Denote by $A(r,R) \subset \mathbb{C}$ the annulus of inner radius $r$ and outer radius $R$ that has been centered at the origin. Then, the function $S(w, z; u,\tau)$ from Definition \ref{DefFunS} is well-defined for $(w,z,u) \in Y = \{ (x_1, x_2, x_3) \in  A(r,R) \times A(r,R) \times (\mathbb{C} \setminus [0, \infty)) : |x_2|< |x_1| \}$ and is jointly continuous in those variables (for fixed $\tau$) over $Y$. If we fix $u \in \mathbb{C} \setminus [0, \infty)$ and $w \in A(r,R)$ then as a function of $z$, $S(w, z; u,\tau)$ is analytic on $\{ \zeta \in A(r,R)  : |\zeta| < |w|\} $; analogously, if we fix $u \in \mathbb{C} \setminus [0, \infty)$ and $z \in A(r,R)$ then as a function of $w$, $S(w, z; u,\tau)$ is analytic on $\{ \zeta \in A(r,R)  : |\zeta| > |z|\} $. Finally, if we fix $w,z \in A(r,R)$ with $|w| > |z|$ then $S(w, z; u,\tau)$ is analytic in $\mathbb{C} \setminus [0, \infty)$ as a function of $u$.
\end{lemma}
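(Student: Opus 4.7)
My plan is to prove the lemma in three stages: verify well-definedness of each summand on $Y$, establish joint continuity via uniform absolute convergence of the series on compact subsets of $Y$, and finally deduce analyticity in each variable separately by applying Weierstrass's theorem to uniform limits of holomorphic functions.

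For well-definedness, I would check that whenever $(w, z, u) \in Y$ one has $|z| \neq \tau^n|w|$ for every $n \in \mathbb{Z}$. Indeed, since $r \leq |z| < |w| \leq R$, the ratio $|z|/|w|$ lies in $[r/R, 1)$, and the hypothesis $r > \tau R$ forces $|z|/|w| \in (\tau, 1)$. As the set $\{\tau^n : n \in \mathbb{Z}\}$ does not meet the open interval $(\tau, 1)$, the required separation holds and every term in the series $(\ref{SpiralDef})$ is finite; the geometric bound already recorded in Definition \ref{DefFunS} then yields pointwise absolute convergence on $Y$.

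For joint continuity, I would fix an arbitrary compact $K \subset Y$ and apply the termwise bound from Definition \ref{DefFunS}, namely $\pi c' \cdot e^{|\phi||A| + |\log R||A|} \cdot e^{(|\phi|-\pi)B|m|}$, with $A = [\log z - \log w]/\log\tau$, $B = -2\pi/\log\tau > 0$, and $-u = Re^{i\phi}$. On $K$ the quantities $|A|, |\log R|, |\phi|$ are uniformly bounded, and $|\phi| \leq \pi - \delta$ for some $\delta > 0$ because $u$ stays away from $[0,\infty)$. The constant $c'$ of $(\ref{S2BoundSine})$ can also be chosen uniform on $K$, since the real part of the argument of the sine, divided by $\pi$, equals $-\log(|z|/|w|)/\log\tau$, which on $K$ lies in a compact subset of $(-1,0)$ (as $|z|/|w|$ ranges in a compact subset of $(\tau,1)$) and is thus bounded away from $\mathbb{Z}$. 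The $m$-th summand is then dominated uniformly on $K$ by $C_K \cdot e^{-\delta B |m|}$, a summable geometric majorant. Since each summand is jointly continuous in $(w,z,u)$ on any simply connected patch of $Y$ where consistent branches of $\log z, \log w$ can be selected, the Weierstrass M-test delivers joint continuity of the full sum on $K$, hence on $Y$.

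For analyticity, I would apply Weierstrass's theorem to the same uniformly convergent series. Fix $u \in \mathbb{C} \setminus [0,\infty)$ and $w \in A(r,R)$, and set $\Omega_w := \{\zeta \in A(r,R) : |\zeta| < |w|\}$. Each summand, restricted to any simply connected subregion of $\Omega_w$ on which a single-valued branch of $\log z$ is chosen, has potential poles only where $\sin(-\pi[\log\tau]^{-1}[\log z - \log w - 2m\pi \i]) = 0$, namely at $z = w\tau^n$ for $n \in \mathbb{Z}$; but $n \geq 1$ gives $|w\tau^n| \leq R\tau < r$ (outside $A(r,R)$) while $n \leq 0$ gives $|w\tau^n| \geq |w| > |z|$ (outside $\Omega_w$), so no such pole lies in the subregion. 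Hence each summand is holomorphic there, and the uniform convergence from the previous step makes the limit holomorphic by Weierstrass. Covering $\Omega_w$ by two simply connected subregions obtained from two distinct radial branch cuts and using joint continuity from Step 2 to glue, one obtains holomorphy on all of $\Omega_w$; global single-valuedness is automatic, since under $\log z \mapsto \log z + 2\pi \i$ the $m$-th summand maps to the $(m-1)$-th and the total sum over $m \in \mathbb{Z}$ is invariant. The argument for $w$ (with $u,z$ fixed) is symmetric. For $u$ (with $w,z$ fixed, $|w| > |z|$) each summand is $\exp(\alpha_m \log(-u))$ times an $m$-dependent constant, hence analytic on $\mathbb{C} \setminus [0,\infty)$; on compact subsets there $|\arg(-u)|$ stays uniformly below $\pi$, so the same majorant argument applies. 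The main obstacle throughout is the branch-cut bookkeeping: individual summands are only locally holomorphic (because of the principal logarithms), while the full sum is globally single-valued through the $m \mapsto m-1$ symmetry, and this subtlety must be tracked carefully when invoking Weierstrass on the multiply connected domain $\Omega_w$.
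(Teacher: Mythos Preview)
The paper does not supply its own proof of this lemma; it is quoted as \cite[Lemma 3.9]{ED2020} and invoked without argument. Your proposal is correct and fills in precisely the standard proof: verify that $|z|/|w|\in(\tau,1)$ on $Y$ so that the sine never vanishes, dominate the $m$-th summand uniformly on compacts by a geometric series (using that $|\arg(-u)|\le\pi-\delta$ and that the real part of the sine argument stays in a compact subset of $(-\pi,0)$), and conclude analyticity by Weierstrass. The one nontrivial point---that individual summands are only locally holomorphic in $z$ (or $w$) because of the principal logarithm, while the full sum is globally single-valued via the $m\mapsto m\pm1$ reindexing---you identify and handle correctly; this is exactly the content of the Remark following Definition~\ref{DefFunS}.
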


With the above notation in place we can state the main result of this section.
\begin{lemma}\label{S3LRewrite} Fix $k \in \mathbb{N}$, $\zeta \in \mathbb{C}\setminus [0, \infty)$, set $u = (1-\tau) \zeta$ and let $H_k(\zeta)$ be as in (\ref{S1KthSum}). Then
\begin{equation}\label{S3ChangeVar}
\begin{split}
H_k(\zeta) = \frac{1}{k! (2\pi \i)^{2k}} \oint_{C^k_w} d\vec{w} \oint_{C^k_z} d\vec{z}\hspace{0.5mm} T(\vec{w}, \vec{z}) D(\vec{w}, \vec{z}) B(\vec{w}, \vec{z}; u ) G(\vec{w},\vec{z}), \mbox{ where }
\end{split}
\end{equation}
\begin{equation}\label{S3NewFun}
\begin{split}
&T(\vec{w}, \vec{z}) = \prod_{a = 1}^k \exp \left( \frac{(q-p)t}{1 + w_a} - \frac{(q-p)t}{1 + z_a}  \right) \left( \frac{1 + z_a}{1 + w_a} \right)^{x-1} \frac{(-w_a;\tau)_{\infty} (z_a^2;\tau)_{\infty}}{(-z_a;\tau)_{\infty} (z_aw_a;\tau)_{\infty}},\\
&D(\vec{w}, \vec{z}) = \det \left[ \frac{1}{w_a - z_b}\right]_{a,b = 1}^k, \hspace{2mm} B(\vec{w}, \vec{z}; u ) = \prod_{a = 1}^k \frac{S(w_a, z_a; u, \tau)}{- [\log \tau] z_a}, \mbox{ and } \\
&G(\vec{w},\vec{z}) = \prod_{1 \leq a < b \leq k}  \frac{(w_aw_b;\tau)_{\infty} (z_az_b;\tau)_{\infty}}{(z_aw_b ;\tau)_{\infty} (w_a z_b;\tau)_{\infty}}.
\end{split}
\end{equation}
In equation (\ref{S3ChangeVar}) we have that $C_w, C_z$ are positively oriented circles, centered at the origin, of radii $R_w$, $R_z$, respectively, such that $R_w > 1 > R_w^{-1} > R_z > \tau R_w$. 
\end{lemma}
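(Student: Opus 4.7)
The overall approach is to perform in (\ref{S1KthSum}) the change of variables $z_a = \tau^{s_a} w_a$ for each $a = 1, \ldots, k$, with $w_a$ held fixed on $\gamma_{-1,0}$. Since $|z_a| = \tau^{1/2}|w_a|$, the image of $s_a \in 1/2 + \i\mathbb{R}$ is the circle of radius $\tau^{3/8}$, traversed clockwise infinitely many times as $s_a$ moves up the line (the period in $s_a$ being $-2\pi\i/\log\tau$), with differential relation $ds_a = dz_a/(z_a\log\tau)$. Assembling these $k$ substitutions, repackaging the infinitely-wound integrals into single-loop integrals weighted by the function $S$ from Definition \ref{DefFunS}, and then deforming the contours, will yield (\ref{S3ChangeVar}).

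The first step is to identify, factor by factor, how the parts of the integrand transform. Direct substitution of $\tau^{s_a} w_a = z_a$ into (\ref{S1BasicFun}) gives
\begin{equation*}
\mathfrak{f}(w_a;s_a)\mathfrak{g}(w_a;s_a) = (1-\tau)^{s_a}\exp\left(\frac{(q-p)t}{1+w_a} - \frac{(q-p)t}{1+z_a}\right)\left(\frac{1+z_a}{1+w_a}\right)^{x-1} \frac{(-w_a;\tau)_\infty (z_a^2;\tau)_\infty}{(-z_a;\tau)_\infty (z_a w_a;\tau)_\infty},
\end{equation*}
and $\mathfrak{h}(w_a,w_b;s_a,s_b)$ becomes the $(a,b)$-th factor of $G(\vec w,\vec z)$ since $\tau^{s_a + s_b} w_a w_b = z_a z_b$. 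The Cauchy-type determinant in (\ref{S1SpecFun}) transforms as $\det[-1/(w_a\tau^{s_a} - w_b)]_{a,b=1}^k = \det[-1/(z_a-w_b)]_{a,b=1}^k = \det[1/(w_a-z_b)]_{a,b=1}^k = D(\vec w,\vec z)$, using transposition in the final equality. Together these account for $T$, $G$, $D$ in (\ref{S3NewFun}), and leave a residual $s$-dependent prefactor $\prod_a (1-\tau)^{s_a} \cdot \pi(-\zeta)^{s_a}/\sin(-\pi s_a)$.

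Using $u = (1-\tau)\zeta$, this prefactor equals $\prod_a \pi(-u)^{s_a}/\sin(-\pi s_a)$. For each $a$, the plan is to decompose the vertical line $1/2 + \i\mathbb{R}$ into countably many arcs, each corresponding to one clockwise loop of the $z_a$-circle, indexed by the winding number $m \in \mathbb{Z}$, with $s_a = [\log z_a - \log w_a - 2\pi\i m]/\log\tau$ on the $m$-th arc. Because all parts of the transformed integrand besides $\pi(-u)^{s_a}/\sin(-\pi s_a)$ are single-valued functions of $z_a$ on each arc, summing the $m$-indexed family produces exactly $S(w_a, z_a; u, \tau)$ of (\ref{SpiralDef}). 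Combining with the Jacobian $dz_a/(z_a\log\tau)$ and the sign from reorienting the $z_a$-circle counterclockwise yields the factor $S(w_a,z_a;u,\tau)/(-[\log\tau] z_a)$, which matches $B(\vec w, \vec z; u)$ in (\ref{S3NewFun}). This establishes (\ref{S3ChangeVar}) for the specific radii $R_w = \tau^{-1/8}$, $R_z = \tau^{3/8}$.

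For arbitrary admissible $(R_w, R_z)$, I will conclude by a contour deformation. The potential singularities of $T$, $D$, $G$ lie at $w_a = -1$, $w_a = -\tau^{-n}$, $z_a = -1$, $z_a = -\tau^{-n}$, $z_a w_a = \tau^{-n}$, $z_a w_b = \tau^{-n}$ (for $n \geq 0$), and $w_a = z_b$, while the singularities of $B$ (via Lemma \ref{S3Analyticity}) are confined to $|z_a| = \tau^n |w_a|$ for $n \in \mathbb{Z}$. All of these are avoided so long as $R_w > 1 > R_w^{-1} > R_z > \tau R_w$, so Cauchy's theorem shows that the right-hand side of (\ref{S3ChangeVar}) is constant as $(R_w, R_z)$ varies over admissible pairs. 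The main technical obstacle is justifying the sum-integral exchange producing $S$: one needs absolute convergence of the winding series $\sum_{m} \pi(-u)^{s_a(m)}/\sin(-\pi s_a(m))$ uniformly in $z_a$ on a single loop, which follows from the bound (\ref{S2BoundSine}) applied to $\sin$, combined with the estimates (\ref{S1S1E1})--(\ref{S1S1E4}) dominating the remaining factors, in the same spirit as the dominated convergence argument in the proof of Lemma \ref{S2AnalyticFD}.
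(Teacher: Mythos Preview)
Your proposal is correct and follows essentially the same approach as the paper's proof: both perform the change of variables $z_a = \tau^{s_a} w_a$, decompose the vertical $s_a$-line into winding segments to produce the function $S$ of Definition~\ref{DefFunS}, and justify the sum-integral exchange via (\ref{S2BoundSine}). The only organizational difference is that the paper first expands the two Cauchy determinants over $S_k$ (reducing to a per-permutation identity (\ref{S3S3E1})) so that the Fubini step in (\ref{S3S3E2})--(\ref{S3S3E3}) can be carried out variable-by-variable, whereas you treat the determinant as a single-valued block in $\vec z$; this is a cosmetic difference, and your contour deformation runs in the reverse direction from the paper's but is equivalent.
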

\begin{proof} We first note that by Cauchy's theorem we may deform $C_w$ to $\gamma_{-1,0}$ as in Proposition \ref{S1Prop1}, and $C_z$ to $\tau^{1/2} \cdot \gamma_{-1,0}$ without affecting the value of the integral. In the latter statement we used that $S(w, z; u, \tau)$ is analytic separately in $w$ and $z$, in view of Lemma \ref{S3Analyticity}, and that in the process of deformation we do not cross any poles of the integrand. We proceed to denote $\gamma_{-1,0}$ by $C_w$ and $\tau^{1/2} \cdot \gamma_{-1,0}$ by $C_z$ in the remainder of the proof. 

Expanding the determinant $D(\vec{w}, \vec{z}) $ on the right side of (\ref{S3ChangeVar}) and the Cauchy determinant $ \det \left[ \frac{-1}{w_a \tau^{s_a} - w_b}  \right]_{a,b = 1}^k$ in the definition of $H_k(\zeta)$, see (\ref{S1SpecFun}) and  (\ref{S1KthSum}), we see that to prove (\ref{S3ChangeVar}) it suffices to show that for each $\sigma \in S_k$ (the permutation group of $k$ elements) and $\vec{w} \in C_w^k$ we have
\begin{equation*}
\begin{split}
& \int_{(1/2 + \i \mathbb{R})^k} d\vec{s}   \prod_{a = 1}^k \frac{\pi (-\zeta)^{s_a}\mathfrak{f}(w_a;s_a) \mathfrak{g}(w_a; s_a) }{\sin(-\pi s_a)(w_a  - w_{\sigma(a)}\tau^{s_{\sigma(a)}} )} \cdot \prod_{1 \leq a < b \leq k} \mathfrak{h}(w_a, w_b; s_a, s_b)  \\
& = \oint_{C^k_z} d\vec{z} \hspace{0.5mm} T(\vec{w}, \vec{z}) B(\vec{w}, \vec{z}; u ) G(\vec{w},\vec{z}) \cdot \prod_{a = 1}^k \frac{1}{w_a - z_{\sigma(a)}}.
\end{split}
\end{equation*}
Writing 
$$\tilde{\mathfrak{f}}(w;n) = \exp \left( \frac{(q-p)t}{1 + w} - \frac{(q-p)t}{1 + \tau^n w}  \right) \cdot \left( \frac{1 + \tau^n w}{1 + w} \right)^{x-1},$$
and recalling the definition of $\mathfrak{f}(w;n)$ from (\ref{S1BasicFun}), and that $u = (1-\tau)\zeta$, we see that to conclude (\ref{S3ChangeVar}) it suffices to show that for each $\sigma \in S_k$ and $\vec{w} \in C_w^k$ we have
\begin{equation}\label{S3S3E1}
\begin{split}
& \int_{(1/2 + \i \mathbb{R})^k} d\vec{s}   \prod_{a = 1}^k \frac{\pi (-u)^{s_a}\tilde{\mathfrak{f}}(w_a;s_a) \mathfrak{g}(w_a; s_a) }{\sin(-\pi s_a)(w_a  - w_{\sigma(a)}\tau^{s_{\sigma(a)}} )} \cdot \prod_{1 \leq a < b \leq k} \mathfrak{h}(w_a, w_b; s_a, s_b)  \\
& = \oint_{C^k_z} d\vec{z} \hspace{0.5mm} T(\vec{w}, \vec{z}) B(\vec{w}, \vec{z}; u ) G(\vec{w},\vec{z}) \cdot \prod_{a = 1}^k \frac{1}{w_a - z_{\sigma(a)}}.
\end{split}
\end{equation}

We next note that 
\begin{equation}\label{S3S3E2}
\begin{split}
& \int_{(1/2 + \i \mathbb{R})^k} d\vec{s}   \prod_{a = 1}^k \frac{\pi (-u)^{s_a}\tilde{\mathfrak{f}}(w_a;s_a) \mathfrak{g}(w_a; s_a) }{\sin(-\pi s_a)(w_a  - w_{\sigma(a)}\tau^{s_{\sigma(a)}} )} \cdot \prod_{1 \leq a < b \leq k} \mathfrak{h}(w_a, w_b; s_a, s_b)   \\
& = \sum_{n_1 \in \mathbb{Z}} \cdots \sum_{n_k \in \mathbb{Z}}\int_{1/2 + \i \pi [\log \tau ]^{-1}}^{1/2 - \i \pi [\log \tau]^{-1}}  ds_1  \cdots \int_{1/2 + \i \pi [\log \tau ]^{-1}}^{1/2 - \i \pi [\log \tau]^{-1}}  ds_k   \\
&  \prod_{a = 1}^k \frac{ \pi (-u)^{s_a - 2\pi \i n_a [\log \tau]^{-1}} \tilde{\mathfrak{f}}(w_a;s_a- 2\pi \i n_a [\log \tau]^{-1}) \mathfrak{g}(w_a; s_a- 2\pi \i n_a [\log \tau]^{-1})}{\sin(-\pi[ s_a- 2\pi \i n_a [\log \tau]^{-1}]) (w_a  - w_{\sigma(a)}\tau^{s_{\sigma(a)} - 2\pi \i n_{\sigma(a)}[\log \tau]^{-1} }   )}  \\
&\times \prod_{1 \leq a < b \leq k} \mathfrak{h}(w_a, w_b; s_a- 2\pi \i n_a [\log \tau]^{-1}, s_b- 2\pi \i n_b [\log \tau]^{-1})  \\
& = \sum_{n_1 \in \mathbb{Z}} \cdots \sum_{n_k \in \mathbb{Z}}\int_{1/2 + \i \pi [\log \tau ]^{-1}}^{1/2 - \i \pi [\log \tau]^{-1}}  ds_1  \cdots \int_{1/2 + \i \pi [\log \tau ]^{-1}}^{1/2 - \i \pi [\log \tau]^{-1}}  ds_k \\
&  \prod_{a = 1}^k \frac{ \pi (-u)^{s_a - 2\pi \i n_a [\log \tau]^{-1}} \tilde{\mathfrak{f}}(w_a;s_a) \mathfrak{g}(w_a; s_a)}{\sin(-\pi[ s_a- 2\pi \i n_a [\log \tau]^{-1}]) (w_a  - w_{\sigma(a)}\tau^{s_{\sigma(a)}  }   )} \prod_{1 \leq a < b \leq k} \mathfrak{h}(w_a, w_b; s_a, s_b).
\end{split}
\end{equation}
We mention that in deriving the last equality we used that $\tau^{2\pi \i n [\log \tau]^{-1}} = 1$ for all $n \in \mathbb{Z}$ and the fact that $\tilde{\mathfrak{f}}(w;s)$, $\mathfrak{g}(w;s)$ depend on $s$ only through $\tau^s$, while $\mathfrak{h}(w_1,w_2; s_1, s_2)$ depends on $s_1, s_2$ only through $\tau^{s_1}$ and $\tau^{s_2}$, see (\ref{S1BasicFun}).

Applying the change of variables $z_a = w_a \tau^{s_a}$ for $a =1, \dots, k$ in (\ref{S3S3E2}) we conclude that 
\begin{equation}\label{S3S3E3}
\begin{split}
& \int_{(1/2 + \i \mathbb{R})^k} d\vec{s}   \prod_{a = 1}^k \frac{\pi (-u)^{s_a}\tilde{\mathfrak{f}}(w_a;s_a) \mathfrak{g}(w_a; s_a) }{\sin(-\pi s_a)(w_a  - w_{\sigma(a)}\tau^{s_{\sigma(a)}} )}  \prod_{1 \leq a < b \leq k} \mathfrak{h}(w_a, w_b; s_a, s_b)   \\
&= \sum_{n_1 \in \mathbb{Z}} \cdots \sum_{n_k \in \mathbb{Z}}\int_{C_z^k} d\vec{z}\hspace{0.5mm} T(\vec{w}, \vec{z}) G(\vec{w}, \vec{z})  \prod_{a = 1}^k \frac{1}{(w_a  - z_{\sigma(a)}) z_a [-\log \tau]} \times \\
& \prod_{a = 1}^k \frac{ \pi (-u)^{[\log z_a - \log w_a ][\log \tau]^{-1} - 2\pi \i n_a [\log \tau]^{-1}}}{\sin(-\pi[ [\log z_a - \log w_a ][\log \tau]^{-1}- 2\pi \i n_a [\log \tau]^{-1}])},
\end{split}
\end{equation}
where we mention that the extra $-1$ sign in (\ref{S3S3E3}) comes from the fact that $C_z$ is positively oriented while $w_a \tau^{s_a}$ covers $C_z$ with negative orientation as $s_a$ varies from $1/2 + \i \pi [\log \tau ]^{-1}$ upward to $1/2 - \i \pi [\log \tau ]^{-1}$. From Fubini's theorem and (\ref{S2BoundSine}) we can put the sums in (\ref{S3S3E3}) inside the integral at which point we obtain (\ref{S3S3E1}), as desired.
\end{proof}

%
\section{Asymptotic analysis: Part I}\label{Section4} The goal of this section is to prove Proposition \ref{S3MainP1}. In Section \ref{Section4.1} we use Lemma \ref{S3LRewrite} to express $I(k,t)$ from Proposition \ref{S3MainP1} as a $2k$-fold contour integral, see (\ref{S4Ikt}), which is suitable for asymptotic analysis. The formula for $I(k,t)$ in (\ref{S4Ikt}) involves several functions and in Section \ref{Section4.2} we establish various estimates for these functions. The proof of Proposition \ref{S3MainP1} is presented in Section \ref{Section4.3}. It is based on the method of steepest descent and relies on the formula for $I(k,t)$ from (\ref{S4Ikt}) and the results from Section \ref{Section4.2}. The lemmas in Section \ref{Section4.1} and \ref{Section4.2} are proved in Section \ref{Section4.4}.

%
\subsection{Formula for $I(k,t)$}\label{Section4.1} The goal of this section is to get a formula for $I(k,t)$ from Proposition \ref{S3MainP1} that is suitable for asymptotic analysis. We begin with a lemma, which is proved in Section \ref{Section4.4}.
\begin{lemma}\label{DetFor1} Let $k \in \mathbb{N}$, $z_1, \dots, z_k, w_1, \dots, w_k \in \mathbb{C}$ be such that $z_i \neq w_j$ and $z_i \neq w_j^{-1}$ for $1 \leq i,j \leq k$. Then we have 
\begin{equation}\label{S5CD1}
 \frac{\prod_{1 \leq i < j \leq k} (1 - w_i w_j) (1 - z_i z_j) (w_i - w_j)(z_j - z_i)}{\prod_{i,j = 1}^k(1 -w_iz_j)(w_i - z_j) } = \det \left[ \frac{1}{(w_i -z_j)(1 - w_iz_j)} \right]_{i,j = 1}^k. 
\end{equation} 
\end{lemma}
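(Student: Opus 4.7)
The plan is to reduce (\ref{S5CD1}) to the classical Cauchy determinant formula (\ref{S2CauchyDet}) by the nonlinear substitution $X_i = (w_i + w_i^{-1})/2$ and $Y_j = (z_j + z_j^{-1})/2$. The motivation for this substitution is the hyperbolic factorization one obtains by writing $w = e^{a}$ and $z = e^{b}$, namely $(w-z)(1-wz) = -4 e^{a+b}\sinh((a-b)/2)\sinh((a+b)/2)$; the product-to-sum identity then expresses the kernel as a difference of $\cosh$'s, that is, of $X_i$'s and $Y_j$'s.

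Concretely, the first step is to verify the purely algebraic identity $(w_i - z_j)(1 - w_iz_j) = -2 w_iz_j(X_i - Y_j)$, which one checks by direct expansion. Pulling the row scalars $-1/(2 w_i)$ and the column scalars $1/z_j$ out of the determinant on the right-hand side of (\ref{S5CD1}) then reduces it to $\det[1/(X_i - Y_j)]_{i,j = 1}^k$, to which (\ref{S2CauchyDet}) applies immediately.

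The last step is to translate the resulting Cauchy product back to the original variables using the parallel identities $X_i - X_j = -(w_i - w_j)(1 - w_iw_j)/(2 w_iw_j)$ and $Y_j - Y_i = -(z_j - z_i)(1 - z_iz_j)/(2 z_iz_j)$, and then simplify. The bookkeeping of signs, powers of $2$, and monomial factors collapses cleanly via the numerical identity $2\binom{k}{2} - k^2 = -k$ together with $\prod_{i<j}w_iw_j = (\prod_{i=1}^k w_i)^{k-1}$ and its $z$-analogue, so that every stray $w_i$ and $z_j$ produced by the row/column rescalings cancels exactly against those appearing in the $X$- and $Y$-differences, leaving precisely the left-hand side of (\ref{S5CD1}). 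The main (and only) obstacle is this clerical sign and power-of-two bookkeeping in the final simplification; conceptually, the lemma is just the classical Cauchy formula in disguise.
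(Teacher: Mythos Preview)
Your proof is correct and takes a genuinely different route from the paper's. The paper proves the identity by induction on $k$: dividing each row of the matrix by its first entry and subtracting the first row, one obtains a $k\times k$ minor that, after factoring, is again of the same form; the induction then closes. This is a direct adaptation of the argument in \cite{Garcia20}.

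Your approach instead reduces (\ref{S5CD1}) in one shot to the classical Cauchy determinant (\ref{S2CauchyDet}) via the Joukowski-type substitution $X_i=(w_i+w_i^{-1})/2$, $Y_j=(z_j+z_j^{-1})/2$, exploiting the factorization $(w-z)(1-wz)=-2wz\bigl((w+w^{-1})/2-(z+z^{-1})/2\bigr)$. This is more conceptual: it explains \emph{why} the kernel $1/\bigl((w-z)(1-wz)\bigr)$ has a Cauchy-type determinant, namely because it \emph{is} a Cauchy kernel in disguised variables. The price is the scalar bookkeeping (signs, powers of $2$, monomial factors), which you have handled correctly; the identity $2\binom{k}{2}-k^2=-k$ together with $(-1)^{k^2}=(-1)^k$ makes everything collapse. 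One small point worth stating explicitly: the substitution requires $w_i,z_j\neq 0$, which is not among the hypotheses, but since both sides of (\ref{S5CD1}) are rational functions of the $w_i,z_j$, proving the identity on this Zariski-open set suffices.
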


The next definition introduces various contours, which will be required in our arguments.
\begin{definition}\label{S4Contours} For each $\epsilon \in [0,1]$ and $\rho \geq 0$, we let $\gamma^{+}_{\rho, \epsilon} = \gamma^{+,0}_{\rho, \epsilon} \cup  \gamma^{+,1}_{\rho, \epsilon}$  and $\gamma^{-}_{\rho, \epsilon} = \gamma^{-,0}_{\rho, \epsilon} \cup  \gamma^{-,1}_{\rho, \epsilon}$ be two contours, where
\begin{equation*}
\begin{split}
&\gamma^{\pm,0}_{\rho, \epsilon} := \{ z \in \mathbb{C}:   z = \rho \pm (a + \i a) \mbox{ or } z = \rho \pm (a - \i a) \mbox{ for $a \in [0, \epsilon]$} \}, \\
& \gamma^{\pm,1}_{\rho, \epsilon} :=  \{z \in \mathbb{C}: \epsilon \leq |\Im(z)| \leq \pi, \Re(z) = \rho \pm \epsilon \}.
\end{split}
\end{equation*}
The contours $\gamma^{+}_{\rho, \epsilon}, \gamma^{-}_{\rho, \epsilon}$ are oriented to have increasing imaginary part.

For $t > 0$ we define the contours
$$\Gamma_{u, \epsilon, t} = t^{1/3} \cdot \gamma^{+}_{0, \epsilon}, \hspace{2mm} \Gamma_{v, \epsilon, t} = t^{1/3} \cdot \gamma^{-}_{-t^{-1/3}, \epsilon}, \hspace{2mm} \Gamma^i_{u, \epsilon, t} = t^{1/3} \cdot \gamma^{+,i}_{0, \epsilon}, \mbox{ and }\Gamma^{i}_{v, \epsilon, t} = t^{1/3} \cdot \gamma^{-,i}_{-t^{-1/3}, \epsilon} \mbox{ for } i = 0,1.$$
All of the above contours are oriented in the direction of increasing imaginary part. 

We also define $C_{w,\epsilon,t}$ and $C_{z,\epsilon,t}$ to be the positively oriented contours obtained from $\gamma^{+}_{0, \epsilon}$ and $\gamma^{-}_{-t^{-1/3}, \epsilon}$, respectively, under the map $x \rightarrow e^x$. Observe that $C_{w,\epsilon,t}$ and $C_{z,\epsilon,t}$ are piecewise smooth positively oriented contours, $C_{z,\epsilon,t}$ is contained in the closed unit disc in $\mathbb{C}$, which in turn is contained in the region enclosed by $C_{w,\epsilon,t}$. Some of the contours in the definition are depicted in Figure \ref{S4_1}.
\end{definition}
\begin{figure}[h]
\scalebox{0.6}{\includegraphics{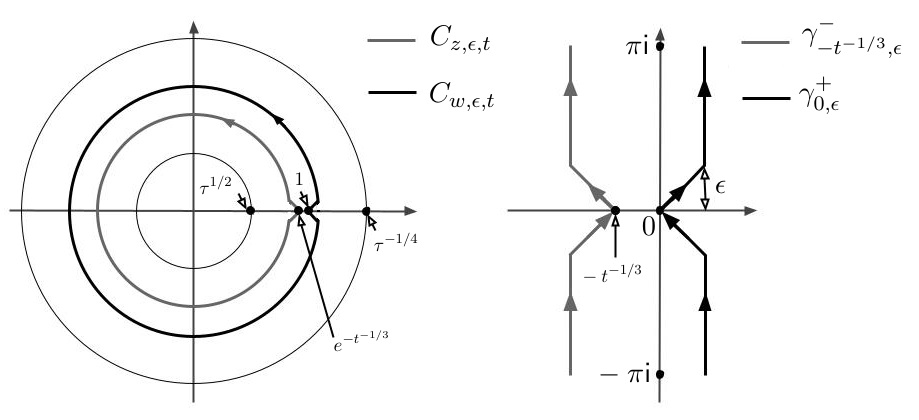}}
\captionsetup{width=\linewidth}
 \caption{ The left figure depicts the contours $C_{w,\epsilon,t}$, $C_{z,\epsilon,t}$ as well as the circles of radii $\tau^{-1/4}$ and $\tau^{1/2}$. The right figure depicts the contours $\gamma^{+}_{0, \epsilon}$ and $\gamma^{-}_{-t^{-1/3}, \epsilon}$.}
\label{S4_1}
\end{figure}

The following lemma introduces a certain analytic function $F(z)$, which will appear in our analysis, and establishes a few of its properties. Its proof is given in Section \ref{Section4.4}.
\begin{lemma}\label{S4LemmaTaylor} For $z \in \mathbb{C}$ such that $e^z \neq -1$ define the function
\begin{equation}\label{S4DefF}
F(z) = \frac{1}{1 + e^z} + \frac{z}{4} - \frac{1}{2}.
\end{equation}
There exist universal constants $\rr \in (0,1/2)$, $C_1, C_2 > 0$, such that
\begin{equation}\label{S4TaylorF}
\begin{split}
&|F(z) - z^3/48| \leq C_1 |z|^4 \mbox{ for $|z| \leq 3\rr$};\\
& \Re[ F(z)] \leq - \frac{|z|^3}{200} + C_2 |\rho|^3 \mbox{ when } z \in \gamma_{\rho, \epsilon}^+, |\Im(z)| \leq \epsilon \mbox{ with $\rho \in \mathbb{R}$, } 0 \leq |\rho| \leq \epsilon \leq \rr; \\
&  \Re[ F(z)] \geq  \frac{|z|^3}{200} - C_2 |\rho|^3 \mbox{ when } z \in \gamma_{\rho, \epsilon}^-, |\Im(z)| \leq \epsilon \mbox{ with $\rho \in \mathbb{R}$, } 0 \leq |\rho| \leq \epsilon \leq \rr,
\end{split}
\end{equation}
where $\gamma^{\pm}_{\rho, \epsilon}$ are as in Definition \ref{S4Contours}. Furthermore, for any $x > 0$ we have 
\begin{equation}\label{S4Decay}
\begin{split}
\frac{d}{dy} \Re [F(x \pm \i y)] \leq 0 \mbox{ and } \frac{d}{dy} \Re [F(- x \pm \i y)] \geq 0 \mbox{ for } y \in [0, \pi].
\end{split}
\end{equation}
\end{lemma}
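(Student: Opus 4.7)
The function admits the clean representation $F(z) = -\tfrac{1}{2}\tanh(z/2) + z/4$, from which one sees that $F$ is odd and analytic on $|z| < \pi$ (the nearest singularities of $1/(1+e^z)$ being $z = \pm \pi \i$). Expanding the Taylor series yields $F(z) = z^3/48 + O(z^5)$. The first inequality in (\ref{S4TaylorF}) then follows: $(F(z) - z^3/48)/z^4$ extends to an analytic function at $z = 0$ and is therefore bounded on any closed disk $|z|\le 3\rr$ chosen inside $|z|<\pi$, yielding the constant $C_1$.

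For the contour estimates, I parametrize $\gamma^{+,0}_{\rho,\epsilon}$ by $z = \rho + a(1 \pm \i)$ and $\gamma^{-,0}_{\rho,\epsilon}$ by $z = \rho - a(1 \pm \i)$ with $a \in [0,\epsilon]$. Direct expansion gives
\[
\Re(z^3) = \rho^3 + 3\rho^2 a - 2a^3 \quad (\text{on } \gamma^{+,0}), \qquad \Re(z^3) = \rho^3 - 3\rho^2 a + 2a^3 \quad (\text{on } \gamma^{-,0}),
\]
together with $|z|^2 = (\rho\pm a)^2 + a^2$. Under the hypothesis $|\rho| \le \epsilon \le \rr$ one has $|z| \le |\rho| + \epsilon\sqrt{2} \le 3\rr$, so the Taylor bound applies and gives $\Re F(z) \le \Re(z^3)/48 + C_1|z|^4$ on $\gamma^{+,0}$. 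At $\rho = 0$ the leading cubic is $-2a^3/48 = -a^3/24$, strictly more negative than $-|z|^3/200 = -\sqrt{2}\,a^3/100 \approx -0.0141\,a^3$, leaving a clear margin. To promote this to general $\rho$, I apply Young's inequality $3\rho^2 a \le C_\delta |\rho|^3 + \delta\,a^3$ (with $\delta$ arbitrarily small), choose $\delta$ small enough so that, after also adding $|z|^3/200 \le (2\sqrt{2}/200)(|\rho|+a)^3$ and the Taylor error $C_1|z|^4 \le 3\rr\,C_1\,|z|^3$, the net $a^3$ coefficient remains strictly negative. All remaining $|\rho|^3$ contributions collect into $C_2|\rho|^3$. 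The estimate on $\gamma^{-,0}$ is the sign-flipped version of the same calculation.

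For the monotonicity assertion in (\ref{S4Decay}) I use an explicit computation. Writing $e^{x + \i y} = e^x\cos y + \i e^x\sin y$ and rationalizing gives
\[
\Re F(x + \i y) = \frac{1 + e^x\cos y}{1 + 2e^x\cos y + e^{2x}} + \tfrac{x}{4} - \tfrac{1}{2}.
\]
Applying the quotient rule, the numerator of the derivative simplifies to $e^x\sin y\,(1-e^{2x})$, so
\[
\frac{d}{dy}\Re F(x + \i y) = \frac{e^x\sin y \,(1 - e^{2x})}{\bigl(1 + 2e^x\cos y + e^{2x}\bigr)^2}.
\]
For $y \in [0,\pi]$ the sign is dictated by $1 - e^{2x}$: nonpositive for $x > 0$ and nonnegative for $x < 0$. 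The case $z = x - \i y$ follows from $F(\bar z) = \overline{F(z)}$, which holds because $F$ has real Taylor coefficients.

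The main obstacle is the relatively tight constant $1/200$ in (\ref{S4TaylorF}): the slack between $a^3/24$ (from $\Re z^3/48$ at $\rho=0$) and $\sqrt{2}\,a^3/100$ (from $|z|^3/200$) is only about $0.028\,a^3$, so the Young parameter $\delta$ and the radius $\rr$ must be chosen small enough that the $a^3$ coefficient stays strictly negative after simultaneously absorbing the Taylor tail $C_1|z|^4$ and the cross term $3\rho^2 a$. Apart from this constants bookkeeping, every step reduces to elementary calculus or algebra.
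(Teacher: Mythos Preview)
Your proof is correct and follows the same overall strategy as the paper: the Taylor bound and the monotonicity computation are essentially identical (the paper also computes $\frac{d}{dy}\Re F(a+\i y)$ directly), and for the contour estimates both arguments reduce to the algebraic inequality comparing $\Re(z^3)/48$ with $|z|^3/200$ on the diagonal rays, the dominant negative term being the $-2a^3/48$ you isolate. The only difference is in how the cross terms are absorbed: the paper splits into the two regimes $r \le 100|\rho|$ (where everything is $O(|\rho|^3)$) and $r \ge 100|\rho|$ (where the $-2^{-3/2}r^3$ term dominates), whereas you use Young's inequality to push the mixed $\rho^2 a$ and $|\rho| a^2$ contributions into $C_2|\rho|^3$ at the cost of an arbitrarily small $a^3$ correction. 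Both are routine; the case split is perhaps slightly cleaner in that the constants are made completely explicit, while your Young argument leaves the verification that $\delta$ and $\rr$ can be chosen compatibly as a (correct) remark rather than a computation.
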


\begin{definition}\label{S4DefEpsilon}
Assume the same notation as in Definition \ref{DefScale}. We let $\epsilon_0 > 0$ be sufficiently small so that $\tau^{-1} \geq e^{20\epsilon_0}$ and $\epsilon_0 \leq \rr$, where $\rr$ is as in Lemma \ref{S4LemmaTaylor}. We also let $T_0$ be sufficiently large so that for $t \geq T_0$ we have $t^{-1/3} \leq \epsilon_0$. 
\end{definition}

With the above notation, we are ready to state our formula for $I(k,t)$. Let $\epsilon_0, T_0$ be as in Definition \ref{S4DefEpsilon}, and for $t \geq T_0$ let $\Gamma_{u,\epsilon_0, t}$, $\Gamma_{v,\epsilon_0, t} $ be as in Definition \ref{S4Contours}. We then have the following formula for all $t \geq T_0$ and $k \in \mathbb{N}$
\begin{equation}\label{S4Ikt}
\begin{split}
I(k,t)= \frac{1}{k! (2\pi \i)^{2k}} \int_{\Gamma^k_{u,\epsilon_0, t} } d\vec{u}  \int_{\Gamma^k_{v,\epsilon_0, t} } d \vec{v} & \prod_{i = 1}^5 A_i ( \vec{u}, \vec{v}; t) \prod_{i = 1}^2 B_i( \vec{u}, \vec{v}; t),
\end{split}
\end{equation}
where 
\begin{equation}\label{S41As}
\begin{split}
&A_1 (\vec{u}, \vec{v}; t) = \prod_{a = 1}^k e^{t [F(t^{-1/3}u_a) - F(t^{-1/3}v_a)]}, \\
&A_2(\vec{u}, \vec{v}; t) = \prod_{a = 1}^k \left( \sum_{m \in \mathbb{Z}} \frac{t^{-1/3} [-\log \tau]^{-1} \pi \cdot e^{ 2m \pi  \i  [(1/4)t +(1/2)( \lfloor t^{2/3}\alpha \rfloor - 1 ) -  t^{1/3} \tilde{r} ]}}{\sin(-\pi  [\log \tau ]^{-1}  [t^{-1/3} (v_a - u_a) -  2m \pi  \i ])} \right), \\
& A_3 (\vec{u}, \vec{v}; t)  = \prod_{a = 1}^k \left( \frac{(1 + e^{t^{-1/3} v_a })e^{-t^{-1/3} v_a/2} }{(1 + e^{t^{-1/3} u_a}) e^{-t^{-1/3} u_a/2} } \right)^{\lfloor t^{2/3} \alpha \rfloor -1}, \\
& A_4(\vec{u}, \vec{v}; t)   = \prod_{a = 1}^k \exp \left( \tilde{r} [ v_a -  u_a] \right) \\
& A_5 (\vec{u}, \vec{v}; t) = t^{k/3}  \cdot  \prod_{a = 1}^k \frac{(-e^{t^{-1/3} u_a} ;\tau)_{\infty} (e^{2 t^{-1/3} v_a}  ;\tau)_{\infty} e^{t^{-1/3} u_a} }{(-e^{ t^{-1/3} v_a} ;\tau)_{\infty} (\tau e^{t^{-1/3} (u_a + v_a)} ;\tau)_{\infty} },
\end{split}
\end{equation}

\begin{equation}\label{S41Bs}
\begin{split}
&B_1(\vec{u}, \vec{v}; t) = \det \left[ \frac{t^{-2/3}}{(e^{t^{-1/3} u_a} -e^{t^{-1/3} v_b})(1 - e^{t^{-1/3} (u_a + v_b)})} \right]_{a,b = 1}^k, \\
&B_2(\vec{u}, \vec{v}; t)  = \prod_{1 \leq a < b \leq k } \frac{(\tau e^{t^{-1/3}(u_a + u_b)}; \tau)_{\infty}(\tau e^{t^{-1/3}(v_a + v_b)}; \tau)_{\infty}}{(\tau e^{t^{-1/3}(u_a + v_b)}; \tau)_{\infty}(\tau e^{t^{-1/3}(v_a + u_b)}; \tau)_{\infty}}.
\end{split}
\end{equation}

Let us briefly explain the origin of (\ref{S4Ikt}). Our starting point is the formula for $H_k(\zeta)$ from (\ref{S3ChangeVar}) in Lemma \ref{S3LRewrite}, where we take $R_w = \tau^{-1/4}$ and $R_z = \tau^{1/2}$. At this point we can deform the contours $C_w$ in (\ref{S3ChangeVar}) to $C_{w,\epsilon,t}$ as in Definition \ref{S4Contours} without crossing any poles and thus without affecting the value of the integral by Cauchy's theorem. Here we used that $S(w, z; u,\tau) $ is analytic in $w$ for fixed $z$, as follows from Lemma \ref{S3Analyticity}. We mention that the integrand in (\ref{S3ChangeVar}) has singularities at $w_a = \tau^m z_a$ for all $m \in \mathbb{Z}$, coming from $B(\vec{w}, \vec{z}; u)$,  at $w_a = z_b$, coming from $D(\vec{w}, \vec{z})$, at $w_a = z_b^{-1} \tau^{-m}$ for all $m \in \mathbb{Z}_{\geq 0}$, coming from $T(\vec{w}, \vec{z})$ and $G(\vec{w}, \vec{z})$, and all of these singularities lie outside of the annulus of inner radius $1$ and outer radius $\tau^{-1/4}$, where the deformation takes place. There is also a singularity at $w_a = -1$, coming from $T(\vec{w}, \vec{z})$, which is also not crossed in the deformation, see the left part of Figure \ref{S4_1}. After we deform all the $C_w$ contours to $C_{w,\epsilon,t}$ we proceed to deform the $C_z$ contours to $C_{z,\epsilon,t}$ as in Definition \ref{S4Contours}. Again, in the process of deformation we do not cross any poles, and by Cauchy's theorem we do not change the value of the integral. We mention that the only poles in the annulus of inner radius $\tau^{1/2}$ and outer radius $1$, where the deformation takes place, come from $z_a = -1$ (from $T(\vec{w}, \vec{z})$), $z_a = w_b$ (from $D(\vec{w}, \vec{z})$), and at $z_a = w_b^{-1}$ (from $T(\vec{w}, \vec{z})$ and $G(\vec{w}, \vec{z})$). As $C_{z,\epsilon,t}$ is strictly contained in the unit circle, and $C_{w,\epsilon,t}$ is outside of it we do not cross the $z_a = -1$ and the $z_a = w_b$ poles. On the other hand, the fact that $\gamma^{+}_{0, \epsilon}$ is to the left of $- \gamma^{-}_{-t^{-1/3}, \epsilon}$, as in Definition \ref{S4Contours}, ensures that we also do not cross $z_a = w_b^{-1}$.

Once we have deformed the contours in (\ref{S3ChangeVar}) to $C_{w,\epsilon,t}$ and $C_{z,\epsilon,t}$, we may apply the Cauchy determinant formula (\ref{S2CauchyDet}) and Lemma \ref{DetFor1} to get
\begin{equation*}
\begin{split}
&I(k,t) = \frac{1}{k! (2\pi \i)^{2k}} \oint_{C^k_{w,\epsilon,t}}\hspace{-2mm}  d\vec{w} \oint_{C^k_{z,\epsilon,t}} \hspace{-2mm} d\vec{z}  \det \left[ \frac{1}{(w_a - z_b)(1-z_aw_b)}\right]_{a,b = 1}^k \cdot  \hspace{-3mm} \prod_{1 \leq a < b \leq k}  \frac{(\tau w_aw_b;\tau)_{\infty} (\tau z_az_b;\tau)_{\infty}}{(\tau z_aw_b ;\tau)_{\infty} (\tau w_a z_b;\tau)_{\infty}}  \\
& \times \prod_{a = 1}^k \frac{S(w_a, z_a; u, \tau)}{- [\log \tau] z_a} \cdot \exp \left( \frac{t}{1 + w_a} - \frac{t}{1 + z_a}  \right) \cdot \left( \frac{1 + z_a}{1 + w_a} \right)^{\lfloor \alpha t^{2/3} \rfloor-1}  \cdot \frac{(-w_a;\tau)_{\infty} (z_a^2;\tau)_{\infty}}{(-z_a;\tau)_{\infty} (\tau z_a w_a;\tau)_{\infty} },
\end{split}
\end{equation*}
where $ u =  - \tau^{- (1/4)t - (1/2)( \lfloor t^{2/3}\alpha \rfloor - 1 )  + t^{1/3} \tilde{r} }$, and $\alpha, \tilde{r}$ are as in Definition \ref{S3ZetaScale}. Applying the change of variables $w_a = e^{t^{-1/3} u_a}$, $z_a = e^{t^{-1/3} v_a}$ for $a = 1, \dots, k$ and using that 
\begin{equation*}
\begin{split}
 &\prod_{a = 1}^k \frac{S(w_a, z_a; u, \tau)}{- [\log \tau] } = \prod_{a = 1}^k \left( \sum_{m \in \mathbb{Z}} \frac{ [-\log \tau]^{-1} \pi \cdot e^{ 2m \pi  \i  [(1/4)t +(1/2)( \lfloor t^{2/3}\alpha \rfloor - 1 ) -  t^{1/3} \tilde{r} ]}}{\sin(-\pi  [\log \tau ]^{-1}  [t^{-1/3} (v_a - u_a) -  2m \pi  \i ])} \right)  \\
&\times \prod_{a = 1}^k \exp\left(\frac{t}{4} (- t^{-1/3} v_a + t^{-1/3} u_a) + (\lfloor t^{2/3} \alpha \rfloor - 1) \left(-t^{-1/3} v_a/2 +  t^{-1/3} u_a/2 \right)  \tilde{r} [v_a - u_a] \right) ,
\end{split}
\end{equation*} 
which holds from (\ref{SpiralDef}), we arrive at (\ref{S4Ikt}).

%
\subsection{Preliminary estimates}\label{Section4.2} In this section we state various estimates for the functions $A_i(\vec{u}, \vec{v}; t) $ for $i = 1, \dots, 5$ and $B_j(\vec{u}, \vec{v}; t)$ for $j = 1,2$ from (\ref{S41As}) and (\ref{S41Bs}) in a sequence of lemmas, whose proofs are given in Section \ref{Section4.4}. These estimates will be required in the proof of Proposition \ref{S3MainP1} in Section \ref{Section4.3}. In the lemmas below $\Gamma_{u,\epsilon, t}, \Gamma_{v,\epsilon, t}$ are as in Definition \ref{S4Contours}.

\begin{lemma}\label{S42L1} Let $F(z)$ and $\rr > 0$ be as in Lemma \ref{S4LemmaTaylor}, and  $\epsilon \in (0, \rr]$. We can find $T_{\epsilon,1}, A_{\epsilon,1}, a_{\epsilon,1} > 0$, depending on $\epsilon$ alone, such that the following holds. If $t \geq T_{\epsilon,1}$, $u \in \Gamma_{u,\epsilon, t}, v \in \Gamma_{v,\epsilon, t}$
\begin{equation}\label{S42L1E}
\left| e^{t [F(t^{-1/3}u) - F(t^{-1/3}v)]}  \right| \leq A_{\epsilon,1} \cdot \exp \left( - a_{\epsilon,1} |u|^3 -  a_{\epsilon,1} |v|^3\right).
\end{equation}
\end{lemma}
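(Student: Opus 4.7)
The plan is to bound $|e^{t F(t^{-1/3}u)}|$ and $|e^{-tF(t^{-1/3}v)}|$ separately, applying the three estimates in Lemma \ref{S4LemmaTaylor} to each factor. Recall that $u\in \Gamma_{u,\epsilon,t}$ means $t^{-1/3}u \in \gamma^+_{0,\epsilon} = \gamma^{+,0}_{0,\epsilon}\cup \gamma^{+,1}_{0,\epsilon}$, and $v\in \Gamma_{v,\epsilon,t}$ means $t^{-1/3}v\in \gamma^-_{-t^{-1/3},\epsilon}$. I would choose $T_{\epsilon,1}$ large enough so that for $t\geq T_{\epsilon,1}$ one has $t^{-1/3}\leq \epsilon\leq \rr$, which legitimizes the use of (\ref{S4TaylorF}) with $\rho=0$ (for $u$) and $\rho=-t^{-1/3}$ (for $v$).

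For the $u$-factor I would split according to whether $t^{-1/3}u\in \gamma^{+,0}_{0,\epsilon}$ or $\gamma^{+,1}_{0,\epsilon}$. In the first case, the point has $|\Im(t^{-1/3}u)|\leq \epsilon$ and $\rho=0$, so the second line of (\ref{S4TaylorF}) gives directly
\[
t\,\Re[F(t^{-1/3}u)] \leq -\tfrac{1}{200}\,t\cdot |t^{-1/3}u|^3 = -\tfrac{1}{200}|u|^3.
\]
In the second case $\Re(t^{-1/3}u)=\epsilon>0$ and $\epsilon \leq |\Im(t^{-1/3}u)| \leq \pi$, so the monotonicity statement (\ref{S4Decay}) implies that $\Re[F(t^{-1/3}u)]$ attains its maximum along the vertical segment at the corners $\epsilon\pm \i\epsilon$, where the first case's estimate yields $\Re[F(\epsilon\pm \i\epsilon)]\leq -(2\epsilon^2)^{3/2}/200$. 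Hence $t\,\Re[F(t^{-1/3}u)] \leq -2^{3/2}\epsilon^3 t/200$. Since on this segment $|t^{-1/3}u|^2\leq \epsilon^2+\pi^2$, we have $|u|^3 \leq t(\epsilon^2+\pi^2)^{3/2}$, and so $-t \leq -|u|^3/(\epsilon^2+\pi^2)^{3/2}$, which converts the bound into one of the form $-c_\epsilon |u|^3$.

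The $v$-factor is analogous, using the third line of (\ref{S4TaylorF}) and the second half of (\ref{S4Decay}) with $x=t^{-1/3}+\epsilon>0$. The only new feature is the error term $C_2|\rho|^3=C_2 t^{-1}$ in the lower bound for $\Re[F(t^{-1/3}v)]$, which contributes a term $-C_2$ to $-t\,\Re[F(t^{-1/3}v)]$; this will be absorbed into the multiplicative constant $A_{\epsilon,1}$. Because $t^{-1/3}\leq\epsilon$ for $t\geq T_{\epsilon,1}$, the corner value satisfies $|{-t^{-1/3}-\epsilon\pm\i\epsilon}|\geq \sqrt{2}\,\epsilon$, and the vertical segment contributes $|t^{-1/3}v|^2 \leq (t^{-1/3}+\epsilon)^2+\pi^2 \leq (2\epsilon)^2+\pi^2$, yielding $|v|^3 \leq t(4\epsilon^2+\pi^2)^{3/2}$. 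Repeating the logic of the previous paragraph gives $-t\,\Re[F(t^{-1/3}v)] \leq -c_\epsilon'|v|^3 + C_2$.

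Combining the two estimates and taking the exponential yields (\ref{S42L1E}) with $a_{\epsilon,1} = \min(1/200,\ 2^{3/2}\epsilon^3/[200(\epsilon^2+\pi^2)^{3/2}],\ 2^{3/2}\epsilon^3/[200(4\epsilon^2+\pi^2)^{3/2}])$ and $A_{\epsilon,1}=e^{C_2}$. There is no real obstacle here once Lemma \ref{S4LemmaTaylor} is granted; the only mildly subtle point is the conversion of the $-c t$ decay on the vertical portions $\gamma^{\pm,1}$ into cubic decay in $|u|$ or $|v|$, which is handled by the uniform bound $|u|^3, |v|^3 = O(t)$ on those segments.
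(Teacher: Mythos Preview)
Your proof is correct and follows essentially the same approach as the paper: split into the straight-line piece $\gamma^{\pm,0}$ (where the second/third line of (\ref{S4TaylorF}) gives cubic decay directly) and the vertical piece $\gamma^{\pm,1}$ (where (\ref{S4Decay}) reduces to the corner value, and the uniform bound $|u|^3,|v|^3=O(t)$ converts $-ct$ into $-c_\epsilon|u|^3$ or $-c_\epsilon|v|^3$). Your constants are slightly sharper than the paper's (you notice that the $u$-factor needs no $C_2$ since $\rho=0$ there, so $A_{\epsilon,1}=e^{C_2}$ suffices rather than $e^{2C_2}$), but the argument is the same.
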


\begin{lemma}\label{S42L2} Let $\rr > 0$ be as in Lemma \ref{S4LemmaTaylor}, $\tt \in (0,1)$ and $\epsilon \in (0,\rr]$ be sufficiently small so that $e^{20\epsilon} \leq \tt^{-1}$. We can find $T_{\epsilon, 2}, A_{\epsilon, 2} > 0$, depending on $\epsilon$ alone, such that the following holds. If $t \geq T_{\epsilon, 2}$, $\tau \in (0, \tt]$, $\alpha, \tilde{r} \in \mathbb{R}$, $u \in \Gamma_{u,\epsilon, t}, v \in \Gamma_{v,\epsilon, t}$
\begin{equation}\label{S42L2E}
\left|  \sum_{m \in \mathbb{Z}} \frac{t^{-1/3} [-\log \tau]^{-1} \pi \cdot e^{ 2m \pi  \i  [(1/4)t +(1/2)( \lfloor t^{2/3}\alpha \rfloor - 1 ) - t^{1/3} \tilde{r} ]}}{\sin(-\pi [\log \tau ]^{-1} [t^{-1/3} (v - u)   -  2m \pi  \i ])}  \right| \leq A_{\epsilon, 2}.
\end{equation}
\end{lemma}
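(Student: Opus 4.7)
The plan is to combine the trigonometric identity for $|\sin|$ on the strip with a Mittag-Leffler type cancellation encoded by the oscillating factor $e^{2m\pi i R_t}$, where $R_t := (1/4)t + (\lfloor t^{2/3}\alpha\rfloor - 1)/2 - t^{1/3}\tilde{r}$. First I would set $L := -\log\tau$, noting that the hypothesis $e^{20\epsilon} \leq \tt^{-1}$ together with $\tau \in (0, \tt]$ gives $L \geq 20\epsilon$, and write $\xi := t^{-1/3}(v - u) = x + iy$, extracting from Definition \ref{S4Contours} that $x \in [-t^{-1/3} - 2\epsilon, -t^{-1/3}]$ and $|y| \leq 2\pi$. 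Since $|e^{2m\pi i R_t}| = 1$, the task reduces to bounding
\[
|\Sigma| \;:=\; \left|\frac{t^{-1/3}\pi}{L}\sum_{m \in \mathbb{Z}} \frac{e^{2m\pi i R_t}}{\sin((\pi/L)(\xi - 2m\pi i))}\right|
\]
by a constant depending only on $\epsilon$. Taking $T_{\epsilon, 2} := \epsilon^{-3}$ ensures $t^{-1/3} \leq \epsilon$ for $t \geq T_{\epsilon, 2}$.

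Next I would apply the identity $\sin(A + iB) = \sin A \cosh B + i\cos A \sinh B$ with $A := \pi x/L$ and $B_m := \pi(y - 2m\pi)/L$, which yields $|\sin((\pi/L)(\xi - 2m\pi i))|^2 = \sin^2 A + \sinh^2 B_m$. Since $|A| \leq \pi(t^{-1/3} + 2\epsilon)/L \leq 3\pi/20 < \pi/2$, we have $|\sin A| \geq (2/\pi)|A| \geq 2t^{-1/3}/L$. For the tail $|m| \geq M := \lceil L/(2\pi^2) \rceil + 1$, $|B_m| \geq 1$, so $|\sinh B_m| \geq e^{|B_m|}/4$, and the resulting sum is a rapidly decaying geometric series whose total contribution to $|\Sigma|$ is $O(t^{-1/3})$ uniformly in $L$.

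The main obstacle is the intermediate range $1 \leq |m| < M$, where naive term-by-term estimation produces an $O(t^{-1/3}\log L)$ bound that fails to be uniform in $\tau$. To obtain a uniform bound I would exploit the cancellation in $e^{2m\pi i R_t}$ via the Mittag-Leffler identity
\[
\sum_{n \in \mathbb{Z}} \frac{e^{2\pi i n R}}{z - n} \;=\; \frac{\pi\, e^{(2R-1)\pi i z}}{\sin(\pi z)} \qquad (R = \{R_t\} \in (0,1)),
\]
supplemented by the principal-value formula $\sum_n 1/(z - n) = \pi\cot(\pi z)$ at $R = 0$. Applied formally in the $L \to \infty$ limit with $z = -i\xi/(2\pi)$, the sum evaluates in closed form to $t^{-1/3} e^{(2\{R_t\}-1)\xi/2}/(2\sinh(\xi/2))$, whose modulus is at most $e^{3\epsilon/2}$ because $|\sinh(\xi/2)| \geq |\sinh(x/2)| \geq t^{-1/3}/2$. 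For the finite-$L$ case, I would rewrite $1/\sin((\pi/L)\zeta) = 2i/(\alpha q^m - \alpha^{-1}q^{-m})$ with $\alpha := e^{i\pi x/L - \pi y/L}$ and $q := e^{2\pi^2/L} > 1$, converting the sum into a bilateral series $\sum_m (bq)^m/(\alpha^2 q^{2m} - 1)$ with $b := e^{2\pi i R_t}$, $|b| = 1$. The minimum of $|\alpha^2 q^{2m} - 1|$ over $m \in \mathbb{Z}$ is attained at the integer $m^*$ nearest $y/(2\pi)$ and is bounded below by a constant multiple of $t^{-1/3}/L$, while $|q| > 1$ produces geometric decay away from $m^*$. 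Adding up these contributions yields the claimed uniform bound $A_{\epsilon, 2}$ depending only on $\epsilon$.
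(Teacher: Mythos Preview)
Your approach differs fundamentally from the paper's. The paper simply applies the triangle inequality to discard the phases $e^{2m\pi i R_t}$ and then invokes an external bound from \cite{ED18} to conclude
\[
\sum_{m\in\mathbb{Z}}\frac{1}{|\sin(-\pi[\log\tau]^{-1}[t^{-1/3}(v-u)-2m\pi i])|}\;\le\;2[-\log\tau]\,t^{1/3}\sum_{k\ge 0}\tau^{2k\pi^2},
\]
which after multiplying by the prefactor $t^{-1/3}\pi/L$ gives $A_{\epsilon,2}=2\pi\sum_{k\ge 0}e^{-40k\epsilon\pi^2}$. No cancellation is used.

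Your observation that term-by-term estimation yields a contribution of order $t^{-1/3}\log L$ from the range $1\le|m|\lesssim L$ is correct: writing $|\sin(A+iB_m)|^2=\sin^2 A+\sinh^2 B_m$ with $|B_m|\ge(2|n|-1)\pi^2/L$ one gets $\sum_{1\le|n|\lesssim L}|1/\sin|\sim(L/\pi^2)\log L$, so the absolute sum is not $O(Lt^{1/3})$ uniformly as $\tau\to 0$ with $t$ fixed. This appears to be in tension with the displayed inequality cited from \cite{ED18}; presumably that reference carries hypotheses (or uses information about the specific contour geometry) that are not visible from the citation alone.

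However, your proposed remedy has the same gap. In the bilateral series
\[
\sum_{m\in\mathbb{Z}}\frac{(bq)^m}{\alpha^2 q^{2m}-1},\qquad q=e^{2\pi^2/L},
\]
the ``geometric decay'' away from $m^*$ has ratio $q^{-1}=e^{-2\pi^2/L}$, which tends to $1$ as $L\to\infty$. Near $m^*$ one has $|\alpha^2 q^{2(m^*+k)}-1|\gtrsim(2|k|-1)\pi^2/L$ and $|(bq)^{m^*+k}|\asymp 1$, so the $k$-th term is of size $L/((2|k|-1)\pi^2)$, and summing over $|k|\lesssim L$ again produces $L\log L$. You have merely rewritten the sum without extracting any cancellation from $b$: the phase $b=e^{2\pi i R_t}$ appears in your final paragraph but is never used. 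The Mittag--Leffler identity you invoke is applied only ``formally in the $L\to\infty$ limit,'' and your finite-$L$ argument reduces to the same absolute-value estimate you correctly criticized. To make this route work you would need a genuine oscillatory-sum estimate (e.g.\ bounding partial sums of $\sum_m b^m/(m-m^*)$ via Abel summation and the uniform boundedness of $\sum_{m=1}^N\sin(2\pi m R)/m$), which is absent from your proposal.
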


\begin{lemma} \label{S42L3} Let $\rr > 0$ be as in Lemma \ref{S4LemmaTaylor}, $\alpha \in \mathbb{R}$ and  $\epsilon \in (0, \rr]$. We can find $T_{\epsilon,3}, a_{\epsilon,3} > 0$, depending on $\epsilon$ alone, such that the following holds. If $t \geq T_{\epsilon,3}$, $u \in \Gamma_{u,\epsilon, t}, v \in \Gamma_{v,\epsilon, t}$
\begin{equation}\label{S42L3E}
\left| \left( \frac{(1 + e^{t^{-1/3} v })e^{-t^{-1/3} v/2} }{(1 + e^{t^{-1/3} u}) e^{-t^{-1/3} u/2} } \right)^{\lfloor t^{2/3} \alpha \rfloor -1}  \right| \leq  \exp \left(  a_{\epsilon,3} \cdot (1 + |\alpha|) \cdot  (|u|^2 + |v|^2)\right).
\end{equation}
\end{lemma}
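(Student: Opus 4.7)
The plan is to first recognize that $(1+e^z)e^{-z/2}=e^{z/2}+e^{-z/2}=2\cosh(z/2)$, so that with the shorthand $w:=t^{-1/3}u$, $\tilde w:=t^{-1/3}v$ and $N:=\lfloor t^{2/3}\alpha\rfloor-1$, the quantity to be bounded equals $\bigl|\cosh(\tilde w/2)/\cosh(w/2)\bigr|^{N}=\exp\bigl(N[F(\tilde w)-F(w)]\bigr)$, where $F(z):=\log|\cosh(z/2)|$ is real-valued. Since $|N|\leq t^{2/3}|\alpha|+2\leq 3t^{2/3}(1+|\alpha|)$ for $t\geq 1$, and $t^{-2/3}(|u|^{2}+|v|^{2})=|w|^{2}+|\tilde w|^{2}$, the lemma will follow at once from a pointwise bound of the form
\begin{equation*}
|F(z)|\leq C_\epsilon |z|^{2}\qquad\text{for every } z\in \gamma^{+}_{0,\epsilon}\cup \gamma^{-}_{-t^{-1/3},\epsilon},
\end{equation*}
with $C_\epsilon$ depending on $\epsilon$ alone: indeed, this will give $|N[F(\tilde w)-F(w)]|\leq 3C_\epsilon(1+|\alpha|)(|w|^2+|\tilde w|^2)=3C_\epsilon t^{-2/3}(1+|\alpha|)(|u|^2+|v|^2)$, yielding the lemma with $a_{\epsilon,3}=3C_\epsilon$.

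To establish the pointwise estimate, I first check that $\cosh(z/2)$ does not vanish on either contour, so that $F$ is well-defined and continuous on them. The zeros of $\cosh(z/2)$ are $z=i\pi(2k+1)$, $k\in\mathbb Z$, and these are avoided because on the diamond pieces $\gamma^{\pm,0}$ one has $|z|\leq 2\epsilon+t^{-1/3}\leq 3\rr<\pi$ (using $\epsilon\leq\rr$ together with $t^{-1/3}\leq\epsilon$, which holds for $t$ large, determining $T_{\epsilon,3}$), while on the vertical pieces $\gamma^{\pm,1}$ the real part of $z$ equals $\pm\epsilon$ or $-t^{-1/3}\pm\epsilon$, hence is nonzero.

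Next I would use two complementary estimates. On the diamond pieces, the Taylor expansion $\cosh(z/2)=1+z^{2}/8+O(z^{4})$ valid in a fixed complex neighborhood of the origin (possibly after shrinking $\rr$) gives $F(z)=\Re(z^{2})/8+O(|z|^{4})$, hence $|F(z)|\leq C_1|z|^{2}$ there. On the vertical pieces, $z$ ranges over a set that, uniformly in $t\geq T_{\epsilon,3}$, is compact and bounded away from the zeros of $\cosh(\cdot/2)$ (since the only $t$-dependence is a translation by at most $t^{-1/3}\leq\epsilon$); continuity then forces $|F(z)|\leq M_\epsilon$ for some constant, and using the lower bound $|z|\geq \epsilon$ on $\gamma^{\pm,1}$ this upgrades to $|F(z)|\leq(M_\epsilon/\epsilon^{2})|z|^{2}$. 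Taking $C_\epsilon:=\max(C_1,M_\epsilon/\epsilon^{2})$ yields the desired uniform bound, finishing the proof.

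The argument is mostly routine; the only point of care is the uniformity in $t$, which comes down to verifying that the constants $C_1$ and $M_\epsilon$ can be chosen independently of $t$ once $t^{-1/3}\leq\epsilon$. This is not a substantive obstacle, since the $t$-dependence of the contour $\gamma^{-}_{-t^{-1/3},\epsilon}$ amounts only to a small uniform translation; I do not expect this lemma to be among the more delicate ones in Section \ref{Section4.2}, in contrast to Lemma \ref{S42L1} (where cubic decay must be extracted via steepest descent) or Lemma \ref{S42L2} (where the alternating series with oscillating exponential factors must be summed).
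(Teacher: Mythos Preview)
Your proof is correct and essentially identical to the paper's: both recognize $(1+e^z)e^{-z/2}=2\cosh(z/2)$, reduce to the pointwise bound $\bigl|\log|\cosh(z/2)|\bigr|\leq C_\epsilon|z|^2$ on the scaled contours, and establish it via Taylor expansion near the origin combined with a compactness/continuity argument away from it. The only cosmetic difference is that the paper packages the uniformity-in-$t$ by introducing a fixed compact set $K_\epsilon=\{x+iy:|x|\leq 2\epsilon,\ |y|\leq\pi,\ |z\pm i\pi|\geq\epsilon\}$ containing both scaled contours for all large $t$, whereas you argue the uniformity directly on the contours.
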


\begin{lemma} \label{S42L4} Let $\rr > 0$ be as in Lemma \ref{S4LemmaTaylor}, $\tt \in (0,1)$ and  $\epsilon \in (0, \rr]$ be sufficiently small so that $e^{20\epsilon} \leq \tt^{-1}$. We can find $T_{\epsilon,4}, A_{\epsilon,4} > 0$, depending on $\epsilon$ alone, such that the following holds. If $t \geq T_{\epsilon,4}$, $\tau \in (0, \tt]$, $u \in \Gamma_{u,\epsilon, t}, v \in \Gamma_{v,\epsilon, t}$
\begin{equation}\label{S42L4E}
\left|  \frac{t^{1/3} (-e^{t^{-1/3} u} ;\tau)_{\infty} (e^{2 t^{-1/3} v}  ;\tau)_{\infty} e^{t^{-1/3} u} }{(-e^{ t^{-1/3} v} ;\tau)_{\infty} (\tau e^{t^{-1/3} (u + v)} ;\tau)_{\infty} } \right| \leq A_{\epsilon,4} \cdot |v|.
\end{equation}
\end{lemma}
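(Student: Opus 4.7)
The approach is to algebraically isolate a factor of $1 - e^{t^{-1/3}v}$ in the numerator which will absorb the prefactor $t^{1/3}$, and then to uniformly bound the remaining ratio of $q$-Pochhammer symbols on the contours $\Gamma_{u,\epsilon,t}\times\Gamma_{v,\epsilon,t}$. Writing $w = e^{t^{-1/3}u}$ and $z = e^{t^{-1/3}v}$, the key identities
\begin{equation*}
(z^2;\tau)_\infty = (1-z)(1+z)(\tau z^2;\tau)_\infty, \qquad (-z;\tau)_\infty = (1+z)(-\tau z;\tau)_\infty,
\end{equation*}
allow the common factor $1+z$ to cancel, yielding
\begin{equation*}
\frac{t^{1/3}(-w;\tau)_\infty (z^2;\tau)_\infty\, w}{(-z;\tau)_\infty (\tau wz;\tau)_\infty}
= t^{1/3}(1-z) \cdot \frac{(-w;\tau)_\infty (\tau z^2;\tau)_\infty\, w}{(-\tau z;\tau)_\infty (\tau wz;\tau)_\infty}.
\end{equation*}

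The first factor $t^{1/3}|1 - e^{t^{-1/3}v}|$ can then be controlled by the elementary inequality $|e^\xi-1|\le|\xi|e^{|\xi|}$: for $v\in\Gamma_{v,\epsilon,t}$ one has $|t^{-1/3}v|\le\sqrt{(t^{-1/3}+\epsilon)^2+\pi^2}$, which is uniformly bounded (for instance by $\sqrt{1+\pi^2}$ once $T_{\epsilon,4}$ is chosen large enough that $t^{-1/3}+\epsilon\le 1$). This immediately yields $t^{1/3}|1-e^{t^{-1/3}v}| \le C_\epsilon |v|$ and supplies the $|v|$ on the right-hand side of the claimed bound.

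For the remaining ratio, I plan to use that $|w|\in[1,e^\epsilon]$ and $|z|\in(0,1]$ on the contours (so $|wz|\le e^\epsilon$), together with the hypothesis $e^{20\epsilon}\le\tt^{-1}$, which forces $|\tau z|$, $|\tau z^2|$ and $|\tau wz|$ to all be bounded by $e^{-19\epsilon}<1$. Applying the term-by-term bounds $|\prod_n(1+a_n)|\le\prod_n(1+|a_n|)$ and $|\prod_n(1-a_n)|\ge\prod_n(1-|a_n|)$ (valid when $|a_n|<1$), the relevant $q$-Pochhammer factors admit upper and lower bounds by positive constants. The main subtlety is to ensure these constants depend only on $\epsilon$ (and not separately on $\tau$ or $\tt$); this boils down to the monotonicity of $\prod_n(1\pm c\tau^n)$ in $\tau\in(0,e^{-20\epsilon}]$, so that the worst case is attained at $\tau=e^{-20\epsilon}$, producing $\epsilon$-only constants such as $(-e^\epsilon;e^{-20\epsilon})_\infty$ for the upper bounds and $(e^{-19\epsilon};e^{-20\epsilon})_\infty$ for the lower bounds.
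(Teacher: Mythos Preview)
Your proposal is correct and follows essentially the same strategy as the paper: isolate the factor that produces the $|v|$ and then bound the residual ratio of $q$-Pochhammer symbols by $\epsilon$-only constants via the monotonicity of $\prod_n(1\pm c\tau^n)$ in $\tau$.

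The one genuine difference is in how the factor $(1+e^{t^{-1/3}v})$ is handled. The paper pulls out $(1-e^{2t^{-1/3}v})$ from the numerator and must then separately bound $|1+e^{t^{-1/3}v}|$ from below in the denominator; it does this by a compactness argument, noting that $t^{-1/3}v$ stays in a compact set $K_\epsilon$ bounded away from $\pm i\pi$, where $1+e^z$ does not vanish. Your algebraic cancellation of the common factor $(1+z)$ between $(z^2;\tau)_\infty$ and $(-z;\tau)_\infty$ sidesteps this entirely, leaving only $(1-e^{t^{-1/3}v})$ to extract and Pochhammer factors $(-\tau z;\tau)_\infty$, $(\tau z^2;\tau)_\infty$ whose arguments have modulus at most $\tau<1$. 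This is a cleaner route and avoids one lemma-within-the-proof; otherwise the two arguments coincide.
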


\begin{lemma}\label{S42L5} Let $\rr > 0$ be as in Lemma \ref{S4LemmaTaylor}, $ \epsilon \in (0, \rr]$ and $k \in \mathbb{N}$. Suppose that $u_a \in \Gamma_{u,\epsilon, t}, v_a \in \Gamma_{v,\epsilon, t}$ for $a \in \{1, \dots, k\}$. We can find $T_{\epsilon,5}, A_{\epsilon,5} > 0$, depending on $\epsilon$ alone, such that if $B_1(\vec{u}, \vec{v}; t)$ is as in (\ref{S41Bs}) and $t \geq T_{\epsilon,5}$, then
\begin{equation}\label{S42L5E}
\left| B_1(\vec{u}, \vec{v}; t)  \right| \leq A_{\epsilon,5}^k \cdot k^{k/2}.
\end{equation}
\end{lemma}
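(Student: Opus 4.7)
The plan is to apply Hadamard's inequality from Lemma \ref{DetBounds}(i) to the $k\times k$ matrix $M(t)$ with entries
\[
M_{ab}(t) := \frac{t^{-2/3}}{(e^{t^{-1/3}u_a}-e^{t^{-1/3}v_b})(1-e^{t^{-1/3}(u_a+v_b)})},
\]
whose determinant is $B_1(\vec u,\vec v;t)$. If I can establish a pointwise bound $|M_{ab}(t)|\leq A_\epsilon$ uniformly over $(u_a,v_b)\in \Gamma_{u,\epsilon,t}\times\Gamma_{v,\epsilon,t}$ and $t\geq T_\epsilon$, then each column of $M(t)$ has Euclidean norm at most $A_\epsilon\sqrt{k}$, so Hadamard gives $|B_1|\leq (A_\epsilon\sqrt{k})^{k}=A_\epsilon^{k}k^{k/2}$. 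Everything therefore reduces to the pointwise estimate.

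The pointwise bound comes from controlling the two factors in the denominator of $M_{ab}(t)$ separately. Set $a:=t^{-1/3}u$, $b:=t^{-1/3}v$; by Definition \ref{S4Contours} one has $\Re a\geq 0$ and $\Re b\leq -t^{-1/3}$, whence $|e^a|\geq 1$ and $|e^b|\leq e^{-t^{-1/3}}$. The elementary inequality $1-e^{-x}\geq x/2$ on $[0,1]$ then yields
\[
|e^{a}-e^{b}|\;\geq\;1-e^{-t^{-1/3}}\;\geq\;t^{-1/3}/2
\]
for $t$ large enough that $t^{-1/3}\leq 1$. This disposes of the first factor.

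The second factor, $|1-e^{a+b}|$, is more subtle because $e^{a+b}=1$ whenever $a+b\in 2\pi i\mathbb{Z}$ and $|\Im(a+b)|$ can approach $2\pi$ at the endpoints of the contours. I would establish the geometric distance bound
\[
\min_{n\in\mathbb{Z}}|a+b-2\pi i n|\;\geq\;c_\epsilon\,t^{-1/3} \qquad \text{uniformly for }(a,b)\in\gamma^+_{0,\epsilon}\times\gamma^-_{-t^{-1/3},\epsilon}
\]
by a finite case analysis over the four combinations of the diagonal and vertical segments of the two contours. The key constraint is that $\Re(a+b)\in[-t^{-1/3}-\epsilon,\,\epsilon-t^{-1/3}]$, so the requirement $\Re(a+b)=0$ forces $\Re a\in[t^{-1/3},\epsilon]$; within each of the four sub-cases, imposing $\Im(a+b)\in\{0,\pm 2\pi\}$ (the only relevant shifts since $|\Im(a+b)|\leq 2\pi$) and tracking the constraints on the contour parameters produces a residual gap of at least $t^{-1/3}$. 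Combining this with the standard local estimate $|1-e^z|\geq c|z-2\pi i n|$ near each zero $2\pi in$, extended uniformly over the bounded range of $a+b$ by continuity and compactness, gives $|1-e^{a+b}|\geq c_\epsilon'\,t^{-1/3}$.

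Multiplying the two lower bounds produces $|e^a-e^b|\cdot |1-e^{a+b}|\geq c_\epsilon''\,t^{-2/3}$, which exactly cancels the $t^{-2/3}$ in the numerator of $M_{ab}(t)$ and delivers the required uniform pointwise bound $|M_{ab}(t)|\leq A_\epsilon$. The main obstacle is the geometric case analysis needed for the distance estimate on $a+b$; once that is in place, Hadamard's inequality closes out the proof without further effort, yielding $A_{\epsilon,5}=A_\epsilon$.
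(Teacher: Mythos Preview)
Your proposal is correct and follows essentially the same route as the paper's proof: Hadamard's inequality reduces the problem to a uniform entrywise bound, the first denominator factor is handled via $|e^a|-|e^b|\ge 1-e^{-t^{-1/3}}\gtrsim t^{-1/3}$, and the second via a lower bound $|1-e^{a+b}|\gtrsim t^{-1/3}$ coming from the distance of $a+b$ to $2\pi i\mathbb{Z}$.

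The only difference is in packaging of the second estimate. You outline an explicit four-way case analysis on the contour segments to obtain the geometric distance bound, then combine with a local estimate for $|1-e^z|$ near its zeros. The paper instead observes that, after a shift by $2m\pi i$ with $m\in\{-1,0,1\}$, the point $t^{-1/3}(u+v)$ lands in the compact rectangle $K_\epsilon=\{|x|\le 2\epsilon,\ |y|\le\pi\}$ with modulus at least $t^{-1/3}$, and then invokes a single compactness argument: since $(e^z-1)/z$ is continuous and nonvanishing on $K_\epsilon$, it is bounded below by some $L_\epsilon>0$ there, giving $|1-e^{t^{-1/3}(u+v)}|\ge L_\epsilon\,t^{-1/3}$ in one stroke. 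This avoids the case split and the separate ``local estimate plus global compactness'' patching you describe. Your approach would work, but the paper's formulation via $(e^z-1)/z$ on $K_\epsilon$ is cleaner and sidesteps the need to verify the distance bound segment by segment.
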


\begin{lemma}\label{S42L6} Let $\rr > 0$ be as in Lemma \ref{S4LemmaTaylor}, $\tt \in (0,1)$ and  $\epsilon \in (0, \rr]$ be sufficiently small so that $e^{20\epsilon} \leq \tt^{-1}$. We can find $T_{\epsilon,6}, A_{\epsilon,6} > 0$, depending on $\epsilon$ alone, such that the following holds. If $t \geq T_{\epsilon,6}$, $\tau \in (0, \tt]$, $u_a \in \Gamma_{u,\epsilon, t}, v_a \in \Gamma_{v,\epsilon, t}$ for $a \in \{1, \dots, k\}$ and $B_2(\vec{u}, \vec{v}; t)$ is as in (\ref{S41Bs}), then
\begin{equation}\label{S42L6E}
\left| B_2(\vec{u}, \vec{v}; t)  \right| \leq A_{\epsilon,6}^{k^2}.
\end{equation}
\end{lemma}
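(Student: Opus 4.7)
The plan is to take the logarithm of $B_2(\vec u,\vec v;t)$ and exploit an algebraic cancellation to reduce matters to summing a geometric-type series. First I would verify that under the hypotheses $e^{20\epsilon}\leq\tt^{-1}$ and $t^{-1/3}\leq\epsilon$ (the latter determining $T_{\epsilon,6}$), every argument $\zeta=\tau e^{t^{-1/3}(\cdot)}$ appearing in the Pochhammer symbols inside $B_2$ satisfies $|\zeta|\leq \tt e^{2\epsilon}\leq\tt^{9/10}<1$. This follows from Definition \ref{S4Contours}: for $u\in\Gamma_{u,\epsilon,t}$ one has $\Re(t^{-1/3}u)\in[0,\epsilon]$, and for $v\in\Gamma_{v,\epsilon,t}$ one has $\Re(t^{-1/3}v)\in[-\epsilon-t^{-1/3},-t^{-1/3}]$, so each individual exponential $e^{t^{-1/3}(\cdot)}$ has modulus in $[e^{-2\epsilon},e^{\epsilon}]$. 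In particular the Pochhammer symbols are all nonzero and every logarithm below is well-defined.

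Next, using the expansion $\log(\zeta;\tau)_\infty=-\sum_{m\geq 1}\zeta^m/(m(1-\tau^m))$ valid for $|\zeta|<1$, I would combine the four logarithms associated to each pair $(a,b)$ and apply the elementary identity
\begin{equation*}
e^{m(X+Y)}+e^{m(X'+Y')}-e^{m(X+Y')}-e^{m(X'+Y)}=\bigl(e^{mX}-e^{mX'}\bigr)\bigl(e^{mY}-e^{mY'}\bigr),
\end{equation*}
with $X=t^{-1/3}u_a$, $X'=t^{-1/3}v_a$, $Y=t^{-1/3}u_b$, $Y'=t^{-1/3}v_b$, to obtain
\begin{equation*}
\log B_2(\vec u,\vec v;t)=-\sum_{1\leq a<b\leq k}\sum_{m=1}^\infty\frac{\tau^m}{m(1-\tau^m)}\bigl(e^{mt^{-1/3}u_a}-e^{mt^{-1/3}v_a}\bigr)\bigl(e^{mt^{-1/3}u_b}-e^{mt^{-1/3}v_b}\bigr).
\end{equation*}
The interchange of the double series and the convergence of the $m$-sum are both justified by the geometric bound $\tau e^{2\epsilon}\leq\tt^{9/10}$ noted above.

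The final step is a triangle-inequality estimate. Each difference of exponentials is bounded by $2e^{m\epsilon}$, so each pair $(a,b)$ contributes at most $4e^{2m\epsilon}\cdot\tau^m/(m(1-\tau^m))$. Summing over the $\binom{k}{2}\leq k^2/2$ pairs and then over $m\geq 1$ yields
\begin{equation*}
|\log B_2(\vec u,\vec v;t)|\leq 2k^2\sum_{m=1}^\infty\frac{\tau^m e^{2m\epsilon}}{m(1-\tau^m)}\leq C_\epsilon k^2,
\end{equation*}
where $C_\epsilon$ depends only on $\epsilon$ since the $m$-sum is dominated by $\sum_m \tt^{9m/10}/(m(1-\tt))$, a convergent series. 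Exponentiating and setting $A_{\epsilon,6}:=e^{C_\epsilon}$ gives $|B_2(\vec u,\vec v;t)|\leq A_{\epsilon,6}^{k^2}$, which is the desired bound. The main conceptual step is noticing the factorization identity that turns a combination of four exponentials into a product of two differences; once that observation is in hand, everything reduces to a routine absolute-convergence estimate made possible by the smallness assumption $e^{20\epsilon}\leq\tt^{-1}$.
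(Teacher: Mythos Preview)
Your proof is correct. The paper takes a more direct route: it simply bounds each ratio of four $q$-Pochhammer symbols using the elementary inequality (\ref{DR1}), namely $\prod_m(1-r\tau^m)\leq|(z;\tau)_\infty|\leq\prod_m(1+r\tau^m)$ for $|z|=r<1$, obtaining for each pair $(a,b)$ the bound $\prod_{m\geq 0}\frac{(1+e^{2\epsilon-20(m+1)\epsilon})^2}{(1-e^{\epsilon-20(m+1)\epsilon})^2}$ and setting $A_{\epsilon,6}$ equal to the square root of this product. Your approach via logarithmic expansion and the factorization identity $(e^{mX}-e^{mX'})(e^{mY}-e^{mY'})$ is more refined; it is essentially the mechanism behind the sharper Lemma~\ref{S5CTBound}, where the cancellation inside the factorized product is actually exploited. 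For the present lemma the extra structure is not needed---you end up bounding each difference crudely by $2e^{m\epsilon}$ anyway---so the paper's direct product estimate is the lighter tool. One small remark: your displayed series bound $\sum_m\tau_1^{9m/10}/(m(1-\tau_1))$ still involves $\tau_1$; to make $C_\epsilon$ depend on $\epsilon$ alone, invoke $\tau_1\leq e^{-20\epsilon}$ to replace it by $\sum_m e^{-18m\epsilon}/(m(1-e^{-20\epsilon}))$.
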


%
\subsection{Proof of Proposition \ref{S3MainP1}}\label{Section4.3} In this section we present the proof of Proposition \ref{S3MainP1}, and for clarity we split the proof into three steps.\\

{\bf \raggedleft Step 1.} Let $\epsilon_0$ be as in Definition \ref{S4DefEpsilon}, and let $T_1 > 0$ be sufficiently large, so that $T_1 \geq T_0$ and $T_1 \geq \max_{i = 1,\dots, 6} T_{\epsilon_0,i}$, where $T_0$ is as in Definition \ref{S4DefEpsilon}, and $T_{\epsilon_0,i}$ are as in Lemmas \ref{S42L1}-\ref{S42L6}. For $k \in \mathbb{N}$ and $t \geq T_1$, we define
\begin{equation}\label{S43E1}
 J(k,t) :=  \int_{(\Gamma^0_{u,\epsilon_0, t})^k } d\vec{u}  \int_{(\Gamma^0_{v,\epsilon_0, t})^k } d \vec{v} \prod_{i = 1}^5 A_i ( \vec{u}, \vec{v}; t) \prod_{i = 1}^2 B_i( \vec{u}, \vec{v}; t),
\end{equation}
where $\Gamma^0_{u,\epsilon_0, t}, \Gamma^0_{v,\epsilon_0, t}$ are as in Definition \ref{S4Contours}, $A_i ( \vec{u}, \vec{v}; t)$ are as in (\ref{S41As}), and $B_i( \vec{u}, \vec{v}; t)$ are as in (\ref{S41Bs}).
We claim that 
\begin{equation}\label{S43E2}
 \lim_{t \rightarrow \infty} J(k,t) =  \int_{C_{0,\pi/4}^k}\hspace{-3mm} d\vec{u}  \int_{C_{-1,3\pi/4}^k}  \hspace{-3mm} d\vec{v} \det \left[ \frac{2v_a e^{u_a^3/48 - u_a^2 \alpha/ 8 - u_a \tilde{r}} }{(u_a-v_a)(v_b^2 - u_a^2)e^{v_a^3/48 - v_a^2 \alpha / 8 - v_a \tilde{r}}}\right]_{a,b = 1}^k.
\end{equation}
We prove (\ref{S43E2}) in the steps below. Here, we assume its validity and conclude the proof of (\ref{S3KE1}).\\

It follows from (\ref{S4Ikt}) that 
\begin{equation}\label{QP1}
\begin{split}
&  k!(2\pi \i)^{2k} I(k,t) - J(k,t)  = \sum_{\sigma_1, \dots, \sigma_k, \tau_1, \dots, \tau_k \in \{0,1\}: \sum_i \sigma_i + \tau_i > 0}\int_{\Gamma^{\sigma_1}_{u,\epsilon_0, t} } \cdots \int_{\Gamma^{\sigma_k}_{u,\epsilon_0, t} }  d\vec{u}   \\
&\int_{\Gamma^{\tau_1}_{v,\epsilon_0, t} } \cdots \int_{\Gamma^{\tau_k}_{v,\epsilon_0, t} }  d \vec{v}  \prod_{i = 1}^5 A_i ( \vec{u}, \vec{v}; t) \prod_{i = 1}^2 B_i( \vec{u}, \vec{v}; t).
\end{split}
\end{equation}
It follows from Lemmas \ref{S42L1}, \ref{S42L2}, \ref{S42L3}, \ref{S42L4}, \ref{S42L5}, and \ref{S42L6} (all lemmas are applied for $\epsilon = \epsilon_0$ as in Definition \ref{S4DefEpsilon} and $\tau = \tau_1$) that for $\vec{u} \in \Gamma^k_{u,\epsilon_0, t}$, $\vec{v} \in \Gamma^k_{v,\epsilon_0, t}$, and $t \geq T_1$, we have
\begin{equation}\label{S43E3}
\begin{split}
&\left| \prod_{i = 1}^5 A_i ( \vec{u}, \vec{v}; t) \prod_{i = 1}^2 B_i( \vec{u}, \vec{v}; t)  \right| \leq A^k_{\epsilon_0,1} \cdot A^k_{\epsilon_0,2} \cdot A^k_{\epsilon_0,4}\cdot A^k_{\epsilon_0,5}\cdot A^{k^2}_{\epsilon_0,6} \cdot k^{k/2}  \\
&\times \prod_{i = 1}^k |v_i| \exp \left( - a_{\epsilon_0,1} ( |u_i|^3 + |v_i|^3) + a_{\epsilon_0,3} (1 + |\alpha|) (|u_i|^2 +|v_i|^2 ) + |\tilde{r}|(|u_i| + |v_i|) \right).
\end{split}
\end{equation}
Observe that if $t \geq T_1$, we have 
$$|z| \geq \epsilon_0 t^{1/3} \mbox{ for } z \in \Gamma^{1}_{u,\epsilon_0, t} \cup \Gamma^{1}_{v,\epsilon_0, t},\mbox{ and }|z| \leq 2\pi t^{1/3} \mbox{ for } z \in \Gamma_{u,\epsilon_0, t} \cup \Gamma_{v,\epsilon_0, t}.$$
Using the latter and (\ref{S43E3}), we conclude that there exist constants $A,a > 0$, such that for $t \geq T_1$
\begin{equation}\label{QP2}
\begin{split}
&\left| \prod_{i = 1}^5 A_i ( \vec{u}, \vec{v}; t) \prod_{i = 1}^2 B_i( \vec{u}, \vec{v}; t)  \right|  \leq A \exp \left( a t^{2/3} - a_{\epsilon_0,1} \epsilon_0^3 t \sum_{i = 1}^k (\sigma_i  + \tau_i) \right) \mbox{ if } \\
& (u_1, \dots, u_k) \in \Gamma^{\sigma_1}_{u,\epsilon_0, t} \times \cdots \times \Gamma^{\sigma_k}_{u,\epsilon_0, t} \mbox{ and } (v_1, \dots, v_k) \in \Gamma^{\tau_1}_{v,\epsilon_0, t} \times \cdots \times \Gamma^{\tau_k}_{v,\epsilon_0, t}.
\end{split}
\end{equation}
Combining (\ref{QP1}), (\ref{QP2}) with the fact that the lengths of $\Gamma^{i}_{u,\epsilon_0, t}, \Gamma^{i}_{v,\epsilon_0, t}$ for $i = 0,1$ are at most $4\pi t^{1/3}$, we conclude that for all large $t$ we have
$$\left| J(k,t)  - k!(2\pi \i)^{2k} I(k,t) \right| \leq \exp \left( - a_{\epsilon_0,1} \epsilon_0^3 t/2 \right).$$
The latter inequality and (\ref{S43E2}) together imply (\ref{S3KE1}).\\

{\bf \raggedleft Step 2.} From (\ref{S43E1}) we have that 
\begin{equation}\label{S43E4}
\begin{split}
&J(k,t) =  \int_{C^k_{0, \pi/4} }   \int_{C_{-1, 3\pi/4}^k } g_t(\vec{u}, \vec{v}) d \vec{v}  d\vec{u}, \mbox{ where } \\
&g_t(\vec{u}, \vec{v}) = \prod_{i =1}^k {\bf 1}\{ |\Im(u_i)| \leq \epsilon_0 t^{1/3}, |\Im(v_i)| \leq \epsilon_0 t^{1/3}  \}  \prod_{i = 1}^5 A_i ( \vec{u}, \vec{v}; t) \prod_{i = 1}^2 B_i( \vec{u}, \vec{v}; t).
\end{split}
\end{equation}
We claim that for all $\vec{u} \in C^k_{0, \pi/4}$ and $\vec{v} \in C_{-1, 3\pi/4}^k$ we have
\begin{equation}\label{S43E5}
\begin{split}
\lim_{t \rightarrow \infty} g_t(\vec{u}, \vec{v}) = \det \left[ \frac{2v_a e^{u_a^3/48 - u_a^2 \alpha/ 8 - u_a \tilde{r}} }{(u_a-v_a)(v_b^2 - u_a^2)e^{v_a^3/48 - v_a^2 \alpha / 8 - v_a \tilde{r}}}\right]_{a,b = 1}^k.
\end{split}
\end{equation}
We prove (\ref{S43E5}) in the next step. Here, we assume its validity and conclude the proof of (\ref{S43E2}).\\

In view of  (\ref{S43E5}) and the dominated convergence theorem, with dominating function given by the right side of (\ref{S43E3}), we may take the $t\rightarrow \infty$ limit in (\ref{S43E4}) to get (\ref{S43E2}).\\

{\bf \raggedleft Step 3.} In this step we prove (\ref{S43E5}), and in the sequel we assume that $\vec{u} \in C^k_{0, \pi/4}$, and $\vec{v} \in C_{-1, 3\pi/4}^k$ are fixed. We begin to investigate the limits of $A_i (\vec{u}, \vec{v}; t) $ and $B_i (\vec{u}, \vec{v}; t) $ in the definition of $g_t(\vec{u}, \vec{v}) $.

From the first line of (\ref{S4TaylorF}) we have for all large $t$ and fixed $z \in \mathbb{C}$
$$\left| t F(t^{-1/3}z) - z^3/48 \right| \leq C_1 t^{-1/3} |z|^4, \mbox{ and so }\lim_{t \rightarrow \infty} t F(t^{-1/3}z)  = z^3/48.$$
In particular, we conclude that 
\begin{equation}\label{QP3}
\lim_{t \rightarrow \infty} A_1 (\vec{u}, \vec{v}; t) = \prod_{a = 1}^k \exp \left( u_a^3/48 - v_a^3/48\right).
\end{equation}
We also observe that for each $z \in \mathbb{C}$
$$\lim_{ t\rightarrow \infty} \sum_{m \in \mathbb{Z}} \frac{t^{-1/3} [-\log \tau]^{-1} \pi \cdot e^{ 2m \pi  \i  [(1/4)t +(1/2)( \lfloor t^{2/3}\alpha \rfloor - 1 ) -  t^{1/3} \tilde{r} ]}}{\sin(-\pi [\log \tau ]^{-1}  [t^{-1/3} z -  2m \pi  \i ])} = \frac{1}{z} ,$$
where we can exchange the order of the sum and the limit by (\ref{S2BoundSine}), and only the $m = 0$ summand contributes to the limit. We conclude that 
\begin{equation}\label{QP4}
\lim_{t \rightarrow \infty} A_2 (\vec{u}, \vec{v}; t) = \prod_{a = 1}^k \frac{1}{v_a - u_a}.
\end{equation}
By a direct Taylor expansion we have for each $\alpha \in \mathbb{R}$ and $z \in \mathbb{C}$
$$\lim_{ t\rightarrow \infty} \left( \frac{(1 + e^{t^{-1/3} z })e^{-t^{-1/3} z/2} }{2} \right)^{ \lfloor t^{2/3} \alpha \rfloor -1} = e^{ z^2 \alpha/8},$$
from which we conclude that 
\begin{equation}\label{QP5}
\lim_{t \rightarrow \infty} A_3 (\vec{u}, \vec{v}; t) = \prod_{a = 1}^k \exp \left( v_a^2 \alpha/8  -  u_a^2 \alpha/8\right).
\end{equation}
For every $z,w \in \mathbb{C}$ we have
\begin{equation*}
\begin{split}
&\lim_{t \rightarrow \infty}  \frac{t^{1/3} (-e^{t^{-1/3} z} ;\tau)_{\infty} (e^{2 t^{-1/3} w}  ;\tau)_{\infty} e^{t^{-1/3} z} }{(-e^{ t^{-1/3} w} ;\tau)_{\infty} (\tau e^{t^{-1/3} (z + w)} ;\tau)_{\infty} } = \lim_{t \rightarrow \infty}  t^{1/3} (1 - e^{2 t^{-1/3} w}) \\
& \times \lim_{t \rightarrow \infty}  \frac{ (-e^{t^{-1/3} z} ;\tau)_{\infty} (\tau e^{2 t^{-1/3} w}  ;\tau)_{\infty} e^{t^{-1/3} z} }{(-e^{ t^{-1/3} w} ;\tau)_{\infty} (\tau e^{t^{-1/3} (z + w)} ;\tau)_{\infty} } =   - 2w,
\end{split}
\end{equation*}
from which we conclude that 
\begin{equation}\label{QP6}
\lim_{t \rightarrow \infty} A_5 (\vec{u}, \vec{v}; t) = \prod_{a = 1}^k ( -2 v_a).
\end{equation}
Finally, we have the straightforward limits
\begin{equation}\label{QP7}
\lim_{t \rightarrow \infty} B_1(\vec{u}, \vec{v};t) = \det \left[ \frac{1}{v_b^2 - u_a^2} \right]_{a,b = 1}^k \mbox{, and } \lim_{t \rightarrow \infty} B_2(\vec{u}, \vec{v};t) = 1.
\end{equation}

Combining (\ref{QP3}), (\ref{QP4}, (\ref{QP5}), (\ref{QP6}) and (\ref{QP7}) we conclude that 
\begin{equation*}
\lim_{t \rightarrow \infty}   \prod_{i = 1}^5 A_i ( \vec{u}, \vec{v}; t) \prod_{i = 1}^2 B_i( \vec{u}, \vec{v}; t) = \prod_{a = 1}^k \frac{e^{u_a^3/48 -  u_a^2 \alpha /8 -  u_a \tilde{r} }}{e^{v_a^3/48 -  v_a^2 \alpha/8 -  v_a \tilde{r} }} \cdot \frac{(-2v_a)}{v_a - u_a} \times \det \left[ \frac{1}{v_b^2 - u_a^2} \right]_{a,b = 1}^k.
\end{equation*}
The last equation implies (\ref{S43E5}) once we use the multilinearity of the determinant.

%
\subsection{Proof of the lemmas from Sections \ref{Section4.1} and \ref{Section4.2}}\label{Section4.4} In this section we present the proofs of the eight lemmas from Sections \ref{Section4.1} and \ref{Section4.2}.\\

\begin{proof}[Proof of Lemma \ref{DetFor1}]
The proof we present here is an adaptation of the one in \cite[Section A]{Garcia20}. We proceed to prove (\ref{S5CD1}) by induction on $k$ with base case $k =1$ being obvious. Assuming the result for $k$ we proceed to prove it for $k+1$. Let us denote for convenience 
$$m(w,z) = \frac{1}{(w-z) (1 - wz)} \mbox{ and } g(w,z) = (w - z) (1 - wz).$$
By dividing each row of $\left( m(w_i, z_j) \right)_{1 \leq i, j \leq k+1}$ by the entry in the first column, and subtracting the first row from all other rows, we obtain 
$$\det \left[ m(w_i, z_j) \right]_{1 \leq i, j \leq k+1} = \prod_{i = 1}^{k+1} m(w_i, z_1) \cdot \det \left[ \frac{m(w_i, z_j)}{m(w_i, z_1)} - \frac{m(w_1, z_j)}{m(w_1,z_1)}\right]_{2 \leq i,j \leq k+1}.$$
We next note by a direct computation that  
$$ \frac{m(w_i, z_j)}{m(w_i, z_1)} - \frac{m(w_1, z_j)}{m(w_1,z_1)} = - g(z_1,z_j)g(w_1,w_i) m(w_1,z_j) m(w_i,z_j).$$
Combining the last two equalities and the multi-linearity of the determinant we get
\begin{equation*}
\begin{split}
\det \left[ m(w_i, z_j) \right]_{1 \leq i, j \leq k+1}= \hspace{2mm} &(-1)^k \prod_{i = 1}^{k+1} m(w_i, z_1)  \prod_{j = 2}^{k+1}  g(z_1,z_j)m(w_1,z_j)  \\
&\times \prod_{i = 2}^{k+1} g(w_1,w_i)  \cdot \det \left[ m(w_i,z_j) \right]_{2 \leq i,j \leq k+1} . 
\end{split}
\end{equation*}
Applying the induction hypothesis to the last determinant we arrive at (\ref{S5CD1}) for $k+1$, which completes the induction step.
\end{proof}

\begin{proof}[Proof of Lemma \ref{S4LemmaTaylor}]
We note that for any $a \neq 0$
$$\Re [ F(a \pm \i y) ] = \Re \frac{1}{1+e^a[ \cos(y) \pm \i \sin (y)] } + \frac{a}{4}- \frac{1}{2} = \frac{e^a \cos (y) + 1}{e^{2a} + 1 + 2 e^a \cos(y)} + \frac{a}{4}- \frac{1}{2}.$$
By direct computation, we conclude that 
$$\frac{d}{dy}\Re [F(a + \i y)]  = - \frac{e^a \sin(y) (e^{2a} - 1)}{(e^{2a} + 1 + 2e^a \cos(y))^2}.$$
The last equality for $a = x$ and $a = -x$ implies (\ref{S4Decay}). In the remainder we prove (\ref{S4TaylorF}).\\

We note that $F(z)$ is a meromorphic function on $\mathbb{C}$ with simple poles at $\pi \i + 2\pi \i \cdot \mathbb{Z}$, and so it is analytic in the zero-centered disc of radius $\pi$. If $F(z) = \sum_{n = 0}^\infty a_n z^n$ is the Taylor expansion of $F(z)$ at the origin, we directly compute $a_0 = a_1 = a_2 = 0$ and $a_3 = 1/48$. The latter suggests that $z^{-4} \cdot (F(z) - z^3/48)$ is an analytic function in the zero-centered disc of radius $\pi$, so that there is a constant $C_1 > 0$, such that for all $|z| \leq 3/2$ we have
$$\left| \frac{F(z) - z^3/48}{z^4} \right| \leq C_1.$$
This specifies our choice of $C_1$ and proves the first inequality in (\ref{S4TaylorF}) for all $\rr \leq 1/2$. 

Let $\rr \in (0,1/2)$ be sufficiently small so that $3 C_1 \cdot \rr \leq 101^{-3}$. We also let $C_2 = 10^6$. This specifies our choice of $\rr$ and $C_2$ and we proceed to prove the second and third line in (\ref{S4TaylorF}). As the proofs of the second and third inequality in (\ref{S4TaylorF}) are quite similar, we only establish the second. 

We fix $\rho \in \mathbb{R}, \epsilon \geq 0$, such that $0 \leq |\rho| \leq \epsilon \leq \rr$. We also fix $z\in \gamma_{\rho, \epsilon}^+$, such that $|\Im(z)| \leq \epsilon$, and note that $z = \rho + r e^{\i \phi}$, where $\phi \in \{-\pi/4, \pi/4\}$, while $0 \leq r \leq \sqrt{2} \epsilon$. In particular, we see that $|z| \leq |\rho| + r  \leq 3 \rr$, so that from the first inequality in (\ref{S4TaylorF}) we have
\begin{equation}\label{GR1}
\Re[ F(z)] \leq  \Re[z^3/48] + C_1 |z|^4 \leq \Re[z^3/48] + C_1 \cdot 3 \rr |z|^3 \leq \Re[z^3/48] + 101^{-3} |z|^3.
\end{equation}  
We also have from $\phi \in \{-\pi/4, \pi/4\}$ that 
\begin{equation}\label{GR2}
 \Re[z^3] = \Re[(\rho + r e^{\i \phi})^3] = \rho^3 - 2^{-3/2} \cdot r^3  + 3\cdot 2^{-1/2} \cdot  \rho^2 r .
\end{equation}  

If $r \leq 100 |\rho|$, then (\ref{GR2}) implies that
\begin{equation}\label{GR3}
  \Re[z^3/48] \leq 5 \cdot |\rho|^3 , \mbox{ and } |z|^3 \leq (101)^3 |\rho|^3.
\end{equation}  
In particular, from (\ref{GR1}) and (\ref{GR3}) we get
$$\Re[ F(z)] \leq \Re[z^3/48] + 101^{-3} |z|^3 \leq 6 \cdot  |\rho|^3  \leq C_2 |\rho|^3 - 200^{-1} \cdot (101)^3 |\rho|^3 \leq - \frac{|z|^3}{200} + C_2 |\rho|^3,$$
which proves the second line in (\ref{S4TaylorF}) if $r \leq 100 |\rho|$.

Finally, we suppose that $r \geq 100 |\rho|$. In this case, we have 
$$(101/100) \cdot r \geq r + |\rho| \geq |z| \geq r - |\rho| \geq (99/100) \cdot r. $$
The latter and (\ref{GR2}) imply
$$ \Re[z^3/48]  \leq \frac{r^3 }{48} \cdot (- 2^{-3/2} + 10^{-6} + 3 \cdot 2^{-1/2} \cdot 10^{-4}) \leq - \frac{r^3}{144} \leq -|z|^3 \cdot \left(200^{-1} + 101^{-3}\right) .$$
The last inequality and (\ref{GR1}) prove (\ref{S4TaylorF}) if $r \geq 100 |\rho|$.
\end{proof}

\begin{proof}[Proof of Lemma \ref{S42L1}] Let $C_2$ be as in Lemma \ref{S4LemmaTaylor}. We will prove that if $t \geq \epsilon^{-3}$ 
\begin{equation}\label{AR1}
\begin{split}
&\left| e^{t F(t^{-1/3}u) }\right| \leq \exp \left(C_2 - \frac{|u|^3 \epsilon^3}{800 \pi^3} \right) \mbox{ for $u \in \Gamma_{u,\epsilon, t}$, and  } \\
&\left| e^{-t F(t^{-1/3}v) }\right| \leq \exp \left(C_2 - \frac{|v|^3 \epsilon^3}{800 \pi^3} \right) \mbox{ for $v \in \Gamma_{v,\epsilon, t}$}.
\end{split}
\end{equation}
Notice that (\ref{AR1}) implies the statement of the lemma with $T_{\epsilon,1} = \epsilon^{-3}$, $A_{\epsilon,1} = \exp(2 C_2)$, and $a_{\epsilon,1} = \frac{\epsilon^3}{800 \pi^3}$. As the proofs of the two lines of (\ref{AR1}) are quite similar, we only establish the second. \\

Notice that if $|\Im (v)| \leq \epsilon t^{1/3}$ and $v \in \Gamma_{v,\epsilon, t}$, we have from the third line of (\ref{S4TaylorF}) with $\rho = -t^{-1/3}$
\begin{equation}\label{AR2}
\Re\left[-t F(t^{-1/3}v) \right] \leq - \frac{|v|^3}{200} + C_2.
\end{equation}
Suppose that $v \in \Gamma_{v,\epsilon, t}$ is such that $|\Im (v)| \geq \epsilon t^{1/3}$, and let $v_0 = -1 - \epsilon t^{1/3} \pm \i \epsilon t^{1/3}$, where the sign is chosen to agree with the imaginary part of $v$. Then, we observe that 
\begin{equation}\label{AR3}
\Re\left[-t F(t^{-1/3}v) \right] \leq \Re\left[-t F(t^{-1/3}v_0) \right] \leq - \frac{|v_0|^3}{200} + C_2 \leq - \frac{|v|^3 \epsilon^3}{800 \pi^3} + C_2,
\end{equation}
where in the first inequality we used the second inequality in (\ref{S4Decay}) with $x = t^{-1/3} + \epsilon$, in the second inequality we used (\ref{AR2}) with $v = v_0$, and in the last inequality we used that $|v_0| \geq \epsilon$, $|v| \leq \sqrt{2}\pi$. Equations (\ref{AR2}) and (\ref{AR3}) imply the second line of (\ref{AR1}), as $|e^z| = \exp ( \Re(z) )$ for all $z \in \mathbb{C}$.  
\end{proof}

\begin{proof}[Proof of Lemma \ref{S42L2}]
We set $T_{\epsilon, 2} = \epsilon^{-3}$ and note that for $t \geq T_{\epsilon, 2}$, $u \in \Gamma_{u,\epsilon, t}$, and $v \in \Gamma_{v,\epsilon, t}$ we have
$$-2\epsilon \leq t^{-1/3} \Re(v) \leq -t^{-1/3} \mbox{ and } \epsilon \geq t^{-1/3}\Re(u) \geq 0.$$
The latter implies that the conditions of \cite[Lemma 4.5]{ED18} are satisfied with $t = \tau$, $u = t^{-1/3}$, $U = 2\epsilon$ (here we used that $\epsilon \leq \rr  < 1/2$ as in Lemma \ref{S4LemmaTaylor}, and $e^{20\epsilon} \leq \tt^{-1}$). From the proof of \cite[Lemma 4.5]{ED18}, see the displayed equation after \cite[(6.15)]{ED18}, we conclude that 
$$\sum_{m \in \mathbb{Z}} \left| \frac{1}{\sin(-\pi [\log \tau ]^{-1}  [t^{-1/3} (v - u)  -  2m \pi  \i])} \right| \leq 2 [-\log \tau] t^{1/3} \sum_{k \geq 0} \tau^{2k \pi^2}.$$
The latter, the triangle inequality, and the fact that $|e^{\i x} | = 1$ for $x \in \mathbb{R}$, imply that the left side of (\ref{S42L2E}) is bounded by
$$2\pi \cdot \sum_{k \geq 0} \tau^{2k \pi^2} \leq 2\pi \cdot \sum_{k \geq 0} e^{-40 k \epsilon \pi^2},$$
which proves (\ref{S42L2E}) with $A_{\epsilon, 2} =  2\pi \cdot \sum_{k \geq 0} e^{-40 k \epsilon \pi^2}$ and $T_{\epsilon, 2} = \epsilon^{-3}$.
\end{proof}

\begin{proof}[Proof of Lemma \ref{S42L3}]
Let $K_{\epsilon} = \{z = x + \i y \in \mathbb{C}: |x| \leq 2\epsilon, |y| \leq \pi \mbox{ and } |z \pm \i \pi| \geq \epsilon\}$. We claim that we can find $a_{\epsilon,3} > 0$, such that for $z \in K_{\epsilon}$ we have
\begin{equation}\label{CR1}
\begin{split}
e^{-(a_{\epsilon,3}/2)|z|^2} \leq \left| \frac{e^{z/2} + e^{-z/2}}{2} \right| \leq e^{(a_{\epsilon,3}/2)|z|^2}.
\end{split}
\end{equation}
If (\ref{CR1}) holds, then (\ref{S42L3E}) would follow with the same choice of $a_{\epsilon,3}$ and $T_{\epsilon,3} = \epsilon^{-3}$, since $|\lfloor t^{2/3} \alpha \rfloor -1| \leq t^{2/3}|\alpha| + 2$ and for $t \geq T_{\epsilon,3}$ we have $t^{-1/3} \Gamma_{u,\epsilon, t} \subseteq K_{\epsilon}$, $t^{-1/3} \Gamma_{v,\epsilon, t} \subseteq K_{\epsilon}$. 

In the remainder we prove (\ref{CR1}). By Taylor expansion, we can find $\delta, A > 0$, such that for $|z| \leq \delta$ the function $\log \left( \frac{e^z + e^{-z}}{2} \right)$ is well-defined, and 
$$ \left| \log \left( \frac{e^z + e^{-z}}{2} \right) \right| \leq A|z|^2.$$
As always, the logarithm is with respect to the principal branch. This shows that for $|z| \leq \delta$
\begin{equation}\label{CR2}
\begin{split}
e^{-A|z|^2} \leq \left| \frac{e^{z/2} + e^{-z/2}}{2} \right| \leq e^{A|z|^2}.
\end{split}
\end{equation}
Since $\frac{e^{z/2} + e^{-z/2}}{2}$ is continuous and does not vanish on $K_{\epsilon}$ (note that the zeros are at $\pi \i  + 2\pi \i \mathbb{Z}$), we conclude that there is a constant $M > 0$ such that for $z \in K_{\epsilon}$ 
$$ e^{-M} \leq \left| \frac{e^{z/2} + e^{-z/2}}{2} \right| \leq e^M.$$
In particular, we see that if $z \in K_{\epsilon}$ and $|z| \geq \delta$ we have
\begin{equation}\label{CR3}
\begin{split}
e^{-M \delta^{-2}|z|^2} \leq e^{-M} \leq \left| \frac{e^{z/2} + e^{-z/2}}{2} \right| \leq e^M \leq e^{ M \delta^{-2} |z|^2}.
\end{split}
\end{equation}
Combining, (\ref{CR2}) and (\ref{CR3}) we conclude (\ref{CR1}) with $a_{\epsilon,3} = 2 \cdot \max( A, M\delta^{-2})$. 
\end{proof}

\begin{proof}[Proof of Lemma \ref{S42L4}]
We set $T_{\epsilon,4} = \epsilon^{-3}$ and proceed to estimate the various terms that appear in (\ref{S42L4E}) for $t \geq T_{\epsilon,4}$. Throughout we will use frequently that $\tau \leq \tau_1 \leq e^{-20\epsilon}$, which follows from our assumptions. 

We note that if $|z| = r \in [0, 1)$ and $\tau \in (0, \tau_1]$, then
\begin{equation}\label{DR1}
\prod_{m = 0}^\infty (1 - r \cdot \tau_1^m) \leq \prod_{m = 0}^\infty (1 - r \cdot \tau^m) \leq |(z;\tau)_{\infty}| \leq \prod_{m = 0}^\infty (1 + r \cdot \tau^m) \leq \prod_{m = 0}^\infty (1 + r \cdot \tau_1^m).
\end{equation}
Since $\frac{1 - e^{2z}}{z}$ is entire, we can find a constant $M_{1}$, such that if $|z| \leq 2\pi$ we have
$$\left| 1 - e^{2z} \right| \leq M_1 |z|.$$
Since for $t \geq T_{\epsilon,4}$ we have $|v t^{-1/3}| \leq 2\pi$ for all $v \in \Gamma_{v,\epsilon, t}$, we conclude that
\begin{equation}\label{DR2}
\begin{split}
&\left| t^{1/3}  (e^{2 t^{-1/3} v}  ;\tau)_{\infty}\right| = \left| t^{1/3}  (1- e^{2t^{-1/3}v}) \right| \cdot  \left| (\tau e^{2 t^{-1/3} v}  ;\tau)_{\infty} \right| \\
& \leq M_1 |v| \cdot \prod_{m = 1}^\infty (1 + \tau_1^m | e^{2m t^{-1/3} v}| ) \leq  M_1 |v| \cdot \prod_{m = 1}^\infty (1 + \tau_1^m) \leq M_1 |v| \cdot \prod_{m = 0}^\infty (1 + e^{-20 (m+1) \epsilon}),
\end{split}
\end{equation}
where in going to the second line we also used (\ref{DR1}), in the second inequality we used that $\Re(v) \leq 0$, and in the last one we used that $\tau_1 \leq e^{-20\epsilon}$.

Since $\Re(ut^{-1/3}) \leq \epsilon$ for $u \in \Gamma_{u,\epsilon, t}$, we see that  
\begin{equation}\label{DR3}
\begin{split}
&\left| (-e^{t^{-1/3} u} ;\tau)_{\infty} \right| \leq \prod_{m = 0}^\infty (1 + e^{\epsilon} \tau^m) \leq  \prod_{m = 0}^\infty (1 + e^{ \epsilon - 20 m \epsilon}),
\end{split}
\end{equation}
where again we used $\tau \leq \tau_1 \leq e^{-20\epsilon}$.

In addition, for $t \geq T_{\epsilon,4}$, $u \in \Gamma_{u,\epsilon, t}, v \in \Gamma_{v,\epsilon, t}$ we note that $t^{-1/3} \Re(u+v) \leq \epsilon$ and hence from (\ref{DR1}) we get
\begin{equation}\label{DR4}
\begin{split}
\left| (\tau e^{t^{-1/3} (u + v)} ;\tau)_{\infty} \right| \geq \prod_{ m = 0}^{\infty}(1 - \tau_1^{m} \tau |e^{t^{-1/3} (u + v)}| ) \geq  \prod_{ m = 0}^{\infty}(1 -   e^{-19\epsilon - 20m \epsilon } ),
\end{split}
\end{equation}
where again we used $\tau \leq \tau_1 \leq e^{-20\epsilon}$.

Finally, let $K_{\epsilon}$ be the compact set $K_{\epsilon} = \{z = x + \i y \in \mathbb{C}: |x| \leq 2\epsilon, |y| \leq \pi \mbox{ and } |z \pm \i \pi| \geq \epsilon\}$. Notice that $e^z + 1$ does not vanish on $K_{\epsilon}$ and so we can find $L_{\epsilon} > 0$ such that for $z \in K_{\epsilon}$ we have
$$|e^z + 1| \geq L_{\epsilon}$$
For $t \geq T_{\epsilon,4}$, and $ v \in \Gamma_{v,\epsilon, t}$ we have that $t^{-1/3}v \in K_{\epsilon}$ and so we get 
\begin{equation}\label{DR5}
\begin{split}
\left| (-e^{ t^{-1/3} v} ;\tau)_{\infty} \right| \geq \left|1 + e^{t^{-1/3}v} \right| \cdot \prod_{m = 0}^\infty \left(1 - \tau_1^m \tau \left| e^{ t^{-1/3} v} \right| \right) \geq L_{\epsilon} \cdot \prod_{m = 0}^\infty \left(1 - e^{ - 20(m+1) \epsilon }\right),
\end{split}
\end{equation}
where we used (\ref{DR1}), $\tau \leq \tau_1 \leq e^{-20\epsilon}$ and that $\Re(v) \leq 0$. Combining (\ref{DR2}), (\ref{DR3}), (\ref{DR4}) and (\ref{DR5}), we obtain (\ref{S42L4E}) with 
$$ A_{\epsilon,4} = \frac{1}{L_{\epsilon}} \prod_{m = 0}^{\infty} \frac{(1 + e^{-20 (m+1) \epsilon}) (1 + e^{ \epsilon - 20 m \epsilon}) }{(1 -   e^{-19\epsilon - 20m \epsilon } )(1 - e^{ - 20(m+1) \epsilon })}.$$
\end{proof}

\begin{proof}[Proof of Lemma \ref{S42L5}]
Let $T_{\epsilon,5} = \epsilon^{-3}$ and $\delta > 0$ be small enough so that for $x \in [0,1]$ we have
$$1 - e^{-x} \geq \delta x.$$
We note that if $t \geq T_{\epsilon,5}$ we have $t^{-1/3} \Re(u) \in [0, \epsilon]$ and $t^{-1/3} \Re(v) \in [-2\epsilon, - t^{-1/3}]$ for $u \in  \Gamma_{u,\epsilon, t}$ and $v \in \Gamma_{v,\epsilon, t}$. The latter implies that for $t \geq T_{\epsilon,5}$, $u \in  \Gamma_{u,\epsilon, t}$ and $v \in \Gamma_{v,\epsilon, t}$ we have
\begin{equation}\label{ER1}
\left|\frac{t^{-1/3}}{e^{t^{-1/3} u} - e^{t^{-1/3} v}}  \right| \leq \frac{t^{-1/3}}{|e^{t^{-1/3} u}| - |e^{t^{-1/3} v}|} \leq \frac{t^{-1/3}}{1 - e^{-t^{-1/3}}} \leq \delta^{-1}.
\end{equation}

Next, we let $K_{\epsilon}$ be the compact set $K_{\epsilon} = \{z = x + \i y \in \mathbb{C}: |x| \leq 2\epsilon , |y| \leq \pi\}$. We observe that $\frac{e^z - 1}{z}$ does not vanish on $K_{\epsilon}$, and so we can find $L_{\epsilon} > 0$, such that for all $z \in K_{\epsilon}$ 
$$\left| \frac{e^z - 1}{z} \right| \geq L_{\epsilon}.$$
Notice that for $t \geq T_{\epsilon,5}$, $u \in  \Gamma_{u,\epsilon, t}$ and $v \in\Gamma_{v,\epsilon, t}$ we have $t^{-1/3}(u+v)$, upto a shift by $2m \pi \i$ for some $m \in \{-1,0,1\}$, belongs to $K_{\epsilon} \cap \{z \in \mathbb{C}: |z| \geq t^{-1/3}\}$. The latter implies that for $t \geq T_{\epsilon,5}$, $u \in  \Gamma_{u,\epsilon, t}$ and $v \in \Gamma_{v,\epsilon, t}$ we have
\begin{equation}\label{ER2}
\left|\frac{t^{-1/3}}{1 - e^{t^{-1/3} (u +v)}}  \right| \leq \frac{t^{-1/3}}{L_{\epsilon} t^{-1/3}} = L^{-1}_{\epsilon}.
\end{equation}
Combining (\ref{ER1}), (\ref{ER2}) and Hadamard's inequality from Lemma \ref{DetBounds}, we get for $t \geq T_{\epsilon,5}$
$$ \left |\det \left[ \frac{t^{-2/3}}{(e^{t^{-1/3} u_a} -e^{t^{-1/3} v_b})(1 - e^{t^{-1/3} (u_a + v_b)})} \right]_{a,b = 1}^k  \right| \leq k^{k/2} (\delta L_{\epsilon})^{-k}, $$
which proves (\ref{S42L5E}) with $A_{\epsilon,5} = (\delta L_{\epsilon})^{-1}$. 
\end{proof}

\begin{proof}[Proof of Lemma \ref{S42L6}]
Let $T_{\epsilon,6} = \epsilon^{-3}$. Note that for $t \geq T_{\epsilon,6}$, $u \in \Gamma_{u,\epsilon, t}$, and $v \in \Gamma_{v,\epsilon, t}$ we have
$$-2\epsilon \leq t^{-1/3} \Re(v) \leq 0 \mbox{ and } \epsilon \geq t^{-1/3}\Re(u) \geq 0.$$
The latter inequalities with (\ref{DR1}) and $\tau \leq \tau_1 \leq e^{-20\epsilon}$ (which follows from the assumptions in the lemma) together imply for $t \geq T_{\epsilon,6}$, $u \in \Gamma_{u,\epsilon, t}$, and $v \in \Gamma_{v,\epsilon, t}$ that 
\begin{equation*}
\begin{split}
\left|\frac{(\tau e^{t^{-1/3}(u_a + u_b)}; \tau)_{\infty}(\tau e^{t^{-1/3}(v_a + v_b)}; \tau)_{\infty}}{(\tau e^{t^{-1/3}(u_a + v_b)}; \tau)_{\infty}(\tau e^{t^{-1/3}(v_a + u_b)}; \tau)_{\infty}} \right| \leq \hspace{-1mm} \prod_{m = 0}^{\infty} \hspace{-1mm} \frac{(1 + \tau^{m+1}e^{2\epsilon})(1 + \tau^{m+1}) }{(1 - \tau^{m+1} e^{\epsilon})^2} \leq \hspace{-1mm} \prod_{m = 0}^{\infty}\hspace{-1mm} \frac{(1 + e^{2\epsilon - 20(m+1) \epsilon})^2}{(1 - e^{ \epsilon - 20(m+1) \epsilon})^2}.
\end{split}
\end{equation*}
The latter proves (\ref{S42L6E}) with $A_{\epsilon,6} = \prod_{m = 0}^{\infty} \frac{1 + e^{2\epsilon - 20(m+1) \epsilon}}{1 - e^{\epsilon - 20(m+1) \epsilon}}$.
\end{proof}

%

\section{Asymptotic analysis: Part II}\label{Section5} In this section we present the proof of Proposition \ref{S3MainP2}. In Section \ref{Section5.1} we present several lemmas, which will be required for our arguments, and whose proofs are given in Section \ref{Section5.4}. The proof of Proposition \ref{S3MainP2} is established by considering the cases $1 \leq k \leq t$ and $t \leq k$ separately in Sections \ref{Section5.2} and \ref{Section5.3}, respectively.

%
\subsection{Preliminary results}\label{Section5.1} In this section we summarize various estimates that will be required in the proof of Proposition \ref{S3MainP2}. As we explained earlier, the estimate of $I(k,t)$ in Proposition \ref{S3MainP2} is established by considering the cases $1 \leq k \leq t$ and $t \geq k$ separately. In the case $1 \leq k \leq t$, we will use the formula for $I(k,t)$ from (\ref{S4Ikt}) and to upper bound it we will use the estimates in Lemmas \ref{S42L1}-\ref{S42L5}. In addition, we will use Lemma \ref{S5CTBound} below in place of Lemma \ref{S42L6} to upper bound the term $B_2(\vec{u}, \vec{v}; t)$ from (\ref{S41Bs}). For the case $k \geq t$, we use a different formula for $I(k,t)$, established in (\ref{S6Ikt}). This formula involves a different set of contours and functions $\tilde{A}_i(\vec{w}, \vec{z};t)$ and $\tilde{B}_i(\vec{w}, \vec{z};t)$ and we estimate these new functions in Lemmas \ref{S61L1}-\ref{S61L5}. The proofs of all lemmas from this section can be found in Section \ref{Section5.4}.

\begin{lemma}\label{S5CTBound} There is a universal constant $\cc > 0$ such that the following holds for  $\tau \in (0, e^{-8\pi}]$. If $k \in \mathbb{N}$ and $W_i, Z_i \in \mathbb{C}$ with $|W_i| \leq 2\pi, |Z_i| \leq 2\pi$ for $i = 1, \dots,k$, then
\begin{equation}\label{CTUB1}
\begin{split}
&\left| \prod_{1 \leq a < b \leq k } \frac{(\tau e^{W_a + W_b}; \tau)_{\infty}(\tau e^{Z_a + Z_b}; \tau)_{\infty}}{(\tau e^{W_a+ Z_b}; \tau)_{\infty}(\tau e^{Z_a + W_b}; \tau)_{\infty}} \right| \leq \exp \left( - A(\tau)\cdot \Re \left[\left( \sum_{a = 1}^k (W_a  - Z_a)  \right)^2 \right] \right)  \cdot \\
& \exp \left( A(\tau) \cdot \Re \left[ \sum_{a = 1}^k (W_a - Z_a)^2 \right] \right) \cdot \prod_{a = 1}^k \exp \left(\cc \tau k  \cdot \sum_{a = 1}^k (|Z_a|^3 + |W_a|^3) \right),
\end{split}
\end{equation}
where
\begin{equation}\label{AtauDef}
A(\tau) := \frac{1}{2} \sum_{n = 1}^{\infty} \sum_{k = 1}^{\infty} k \tau^{nk} = \frac{1}{2}\sum_{k = 1}^{\infty} \frac{k \tau^{k}}{1 - \tau^{k}} \in (0, \infty). 
\end{equation}
\end{lemma}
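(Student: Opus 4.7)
The plan is to analyze $\log$ of the left-hand side of (\ref{CTUB1}) via the classical expansion
\begin{equation*}
\log(x;\tau)_\infty = -\sum_{m\geq 1}\frac{x^m}{m(1-\tau^m)},
\end{equation*}
which converges in our setting because $\tau \leq e^{-8\pi}$ together with $|W_a|,|Z_a|\leq 2\pi$ ensures $|\tau e^{\pm W_a \pm Z_b}| \leq e^{-4\pi} < 1$. Combining this with the key algebraic identity
\begin{equation*}
e^{m(W_a+W_b)}+e^{m(Z_a+Z_b)}-e^{m(W_a+Z_b)}-e^{m(Z_a+W_b)} = (e^{mW_a}-e^{mZ_a})(e^{mW_b}-e^{mZ_b}),
\end{equation*}
and the elementary relation $\sum_{a<b}X_a X_b = \frac{1}{2}[(\sum_a X_a)^2-\sum_a X_a^2]$ applied to $X_a = E_{a,m} := e^{mW_a}-e^{mZ_a}$, one arrives at
\begin{equation*}
\log(\mathrm{LHS}) = -\sum_{m\geq 1}\frac{\tau^m}{2m(1-\tau^m)}\left[\left(\sum_a E_{a,m}\right)^2 - \sum_a E_{a,m}^2\right].
\end{equation*}

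Next, I would Taylor expand $E_{a,m} = m(W_a-Z_a) + R_{a,m}$, where $R_{a,m} = \sum_{j\geq 2}\frac{m^j(W_a^j-Z_a^j)}{j!}$, and isolate the contribution of the leading $m(W_a-Z_a)$ pieces. This leading quadratic-in-$(W-Z)$ contribution carries the coefficient
\begin{equation*}
\sum_{m\geq 1}\frac{m^2\tau^m}{2m(1-\tau^m)} = \frac{1}{2}\sum_{m\geq 1}\frac{m\tau^m}{1-\tau^m} = A(\tau),
\end{equation*}
appearing in front of $(\sum_a(W_a-Z_a))^2$ and $\sum_a(W_a-Z_a)^2$ with signs matching the first two exponential factors in (\ref{CTUB1}) after taking real parts.

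The remaining task is to dominate the cross-terms involving $R_{a,m}$ by the cubic error factor. Factoring $W_a^j-Z_a^j = (W_a-Z_a)\sum_{i=0}^{j-1}W_a^i Z_a^{j-1-i}$ shows that $R_{a,m}$ always carries a $(W_a-Z_a)$ factor and is at least linear in $(|W_a|,|Z_a|)$; consequently every cross-term in the expansion of $(\sum_a E_{a,m})^2 - \sum_a E_{a,m}^2$ lying outside the leading $A(\tau)$ contribution is at least cubic in $(|W|,|Z|)$. Using $|W_a|,|Z_a|\leq 2\pi$ to bound $\max(|W_a|,|Z_a|)^{j-3}$ by a constant for $j\geq 3$, and exploiting $\tau^m e^{2\pi m}\leq e^{-6\pi m}$ (guaranteed by $\tau \leq e^{-8\pi}$) to ensure convergence of the resulting series $\sum_m m^s \tau^m e^{c m}$, an overall factor of $\tau$ is extracted from the dominant $m=1$ term. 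AM--GM inequalities such as $(|W_a|+|Z_a|)(|W_b|+|Z_b|)^2 \leq C\bigl((|W_a|^3+|Z_a|^3) + (|W_b|^3+|Z_b|^3)\bigr)$, followed by summation over $a,b$, yield the factor of $k$ and the desired bound by $\sum_a(|W_a|^3+|Z_a|^3)$.

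The main obstacle I anticipate is the combinatorial bookkeeping of the cross-terms: since $|W|,|Z|$ are only bounded above rather than small, a naive Taylor remainder estimate would only give quadratic (not cubic) error in $(|W|,|Z|)$, and a quadratic bound cannot be upgraded to cubic by brute force. One must carefully extract the additional $(W_a-Z_a)$ factor hidden in every term of $R_{a,m}$, pair it appropriately with the surrounding $(W_b-Z_b)$ or $R_{b,m}$ factor, and then invoke the Taylor-type bound $|W_a^{j-1} - \text{lower}| \lesssim (2\pi)^{j-3}(|W_a|+|Z_a|)$ only at the very end, so that the resulting estimate is genuinely cubic; this is the technical heart of the argument.
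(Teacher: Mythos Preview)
Your approach is correct and leads to the same bound, but the paper organizes the expansion differently in a way that sidesteps precisely the bookkeeping you flag as the main obstacle. Rather than summing the $q$-Pochhammer logarithm first and then Taylor expanding $E_{a,m}=e^{mW_a}-e^{mZ_a}$, the paper expands each individual factor $\log(1-\lambda e^{z})=-\lambda\sum_{m\ge 0}A_m(\lambda)\,z^m$ directly as a power series in the \emph{argument} $z$, with $\lambda=\tau^n$. In the cross-ratio the $m=0$ and $m=1$ terms vanish identically; the $m=2$ term equals exactly $2(W_a-Z_a)(W_b-Z_b)$, which after summing over $n$ and over $a<b$ reproduces the two $A(\tau)$ exponentials; and for $m\ge 3$ the trivial bound $|(X+Y)^m|\le (4\pi)^{m-3}(|X|+|Y|)^3\le 4(4\pi)^{m-3}(|X|^3+|Y|^3)$ gives the cubic error in one line, with the factor of $k$ coming from $\sum_{a<b}(|W_a|^3+|W_b|^3+|Z_a|^3+|Z_b|^3)=(k-1)\sum_a(|W_a|^3+|Z_a|^3)$. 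Because the paper never writes $E_{a,m}=m(W_a-Z_a)+R_{a,m}$, the mixed terms $m(W_a-Z_a)R_{b,m}$, $R_{a,m}R_{b,m}$ that you plan to control via AM--GM never appear; the two expansions are algebraically equivalent, but the paper's ordering turns your ``technical heart'' into a triviality.
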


\begin{definition}\label{S6Cont} Let $\tau \in (0, e^{-8\pi}]$, and let $C_w, C_z$ be the positively oriented circles, centered at the origin, of radii $R_w = e$ and $R_z = \tau^{3/4}$. We observe that the choice of $R_w, R_z$ satisfies $R_w > 1 > R_w^{-1} > R_z > \tau R_w$, since $\tau \in (0, e^{-8\pi}]$.
\end{definition}

Using the definition of $I(k,t)$ from Definition \ref{DefScale} and Lemma \ref{S3LRewrite} we conclude that
\begin{equation}\label{S6Ikt}
\begin{split}
I(k,t)= \frac{1}{k! (2\pi \i)^{2k}} \oint_{C_w^k } d\vec{w}  \oint_{C_z^k } d \vec{z} & \prod_{i = 1}^4 \tilde{A}_i ( \vec{w}, \vec{z}; t) \prod_{i = 1}^2 \tilde{B}_i( \vec{w}, \vec{z}; t),
\end{split}
\end{equation}
where $C_w, C_z$ are as in Definition \ref{S6Cont}, and 
\begin{equation}\label{S61Bs}
\begin{split}
&\tilde{B}_1(\vec{w}, \vec{z}; t) = \det \left[ \frac{1}{w_a - z_b} \right]_{a,b = 1}^k, \hspace{2mm} \tilde{B}_2(\vec{w}, \vec{z}; t)  = \prod_{1 \leq a < b \leq k } \frac{(w_aw_b;\tau)_\infty (z_az_b;\tau)_\infty }{(z_aw_b;\tau)_\infty (w_az_b;\tau)_\infty}.
\end{split}
\end{equation}
\begin{equation}\label{S61As}
\begin{split}
&\tilde{A}_1 (\vec{w}, \vec{z}; t) = \prod_{a = 1}^k \exp \left( t \left[\frac{1}{1 + w_a} + \frac{\log w_a}{4} - \frac{1}{1+z_a} - \frac{\log z_a}{4} \right] \right), \\
&\tilde{A}_2(\vec{w}, \vec{z}; t) = \prod_{a = 1}^k \left( \sum_{m \in \mathbb{Z}} \frac{ [-\log \tau]^{-1} \pi \cdot e^{ 2m \pi  \i  [(1/4)t +(1/2)( \lfloor t^{2/3}\alpha \rfloor - 1 ) -  t^{1/3} \tilde{r} ]}}{\sin(-\pi  [\log \tau ]^{-1} [\log z_a - \log w_a   -  2m \pi  \i  ])} \right), \\
& \tilde{A}_3 (\vec{w}, \vec{z}; t)  = \prod_{a = 1}^k \left( \frac{(1 + z_a )e^{-\log z_a /2} }{(1 + w_a) e^{-\log w_a/2} } \right)^{\lfloor t^{2/3} \alpha \rfloor -1},  \\
& \tilde{A}_4 (\vec{w}, \vec{z}; t) =     \prod_{a = 1}^k \frac{(-w_a ;\tau)_{\infty} (z_a^2  ;\tau)_{\infty} }{(-z_a ;\tau)_{\infty} (z_a w_a  ;\tau)_{\infty} \cdot z_a } \cdot \exp \left( t^{1/3} \tilde{r} [ \log z_a  -  \log w_a] \right),
\end{split}
\end{equation}
In equations (\ref{S61As}), and (\ref{S61Bs}) we take the principal branch of the logarithm everywhere.

In the remainder of this section, we estimate the functions that appear in $\tilde{A}_i (\vec{w}, \vec{z}; t)$, $\tilde{B}_i(\vec{w}, \vec{z}; t)$ over the contours $C_w, C_z$ in a sequence of lemmas.
\begin{lemma}\label{S61L1} Let $\tau, C_w, C_z$ be as in Definition \ref{S6Cont}. For $w \in C_w$, and $z \in C_z$ we have
\begin{equation}\label{S61E1}
\left| \exp \left(\frac{1}{1 + w} + \frac{\log w}{4} - \frac{1}{1+z} - \frac{\log z}{4} \right) \right| \leq \tau^{-1/4}.
\end{equation}
\end{lemma}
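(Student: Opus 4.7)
The plan is to prove the inequality by estimating the real part of the expression inside the exponential, since $|e^X| = e^{\Re X}$ for any $X \in \mathbb{C}$. Writing
\[X(w,z) := \frac{1}{1+w} + \frac{\log w}{4} - \frac{1}{1+z} - \frac{\log z}{4},\]
it suffices to show $\Re X(w,z) \leq -\tfrac{1}{4}\log \tau = \log(\tau^{-1/4})$ for every $w \in C_w$ and $z \in C_z$. I handle the four summands separately.

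For the logarithmic contributions, since we use the principal branch and $|w| = e$ on $C_w$, $|z| = \tau^{3/4}$ on $C_z$, one has the exact identities
\[\Re\!\left(\frac{\log w}{4}\right) = \frac{\log|w|}{4} = \frac{1}{4}, \qquad \Re\!\left(-\frac{\log z}{4}\right) = -\frac{\log|z|}{4} = -\frac{3\log\tau}{16}.\]
For the rational term $1/(1+w)$, the reverse triangle inequality gives $|1+w| \geq |w|-1 = e-1$, so $\Re(1/(1+w)) \leq |1/(1+w)| \leq 1/(e-1)$. For $-1/(1+z)$, I use the algebraic identity $-1/(1+z) = -1 + z/(1+z)$ together with $|z/(1+z)| \leq |z|/(1-|z|) = \tau^{3/4}/(1-\tau^{3/4})$ to conclude
\[\Re\!\left(\frac{-1}{1+z}\right) \;\leq\; -1 + \frac{\tau^{3/4}}{1-\tau^{3/4}}.\]

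Combining the four estimates yields
\[\Re X(w,z) \;\leq\; \frac{1}{e-1} + \frac{1}{4} - 1 + \frac{\tau^{3/4}}{1-\tau^{3/4}} - \frac{3\log\tau}{16},\]
so the desired bound $\Re X(w,z) \leq -\tfrac{1}{4}\log\tau$ reduces to
\[\frac{1}{e-1} + \frac{1}{4} - 1 + \frac{\tau^{3/4}}{1-\tau^{3/4}} \;\leq\; -\frac{\log\tau}{16}.\]
The left side is bounded by a small absolute constant: $1/(e-1) + 1/4 - 1 < 0$ by direct inspection, and the hypothesis $\tau \leq e^{-8\pi}$ forces $\tau^{3/4}/(1-\tau^{3/4}) \leq 2\tau^{3/4} \leq 2 e^{-6\pi}$, which is negligible. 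The right side satisfies $-\log\tau/16 \geq \pi/2$ under the same hypothesis, providing plenty of slack. I do not anticipate any genuine obstacle in executing this plan; it is elementary real-part bookkeeping, and the constraint $\tau \leq e^{-8\pi}$ baked into Definition~\ref{S6Cont} is precisely what guarantees that the logarithmic term $-\tfrac{3}{16}\log\tau$ absorbs the bounded contributions from the $1/(1+w)$ and $1/(1+z)$ terms.
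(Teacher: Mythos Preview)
Your proof is correct. The approach differs slightly from the paper's: the paper parametrizes $w=e^{u}$, $z=e^{v}$ with $\Re u=1$, $\Re v=\tfrac{3}{4}\log\tau$, rewrites the exponent as $F(u)-F(v)$ with $F$ as in (\ref{S4DefF}), and then invokes the monotonicity statement (\ref{S4Decay}) from Lemma~\ref{S4LemmaTaylor} to reduce to the real-axis case, obtaining the sharper bound $\frac{1}{1+e}+\frac{1}{4}-\frac{1}{1+\tau^{3/4}}-\tfrac{3}{16}\log\tau$. You instead bound $\Re\frac{1}{1+w}$ and $\Re\frac{-1}{1+z}$ directly via the triangle inequality, getting the slightly cruder constant $\frac{1}{e-1}$ in place of $\frac{1}{1+e}$. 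Your route is more self-contained (it does not need the prior lemma), while the paper's route reuses structure already established; either way the slack $-\tfrac{1}{16}\log\tau\geq \pi/2$ from $\tau\leq e^{-8\pi}$ swallows the constants easily.
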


\begin{lemma}\label{S61L2} Let $\tau, C_w, C_z$ be as in Definition \ref{S6Cont}. There exists a constant $B_{\tau,2} > 0$, depending on $\tau$ alone, such that for all $t > 0$, $\alpha, \tilde{r} \in \mathbb{R}$, $w \in C_w$, and $z \in C_z$ we have
\begin{equation}\label{S61E2}
\left| \sum_{m \in \mathbb{Z}} \frac{ [-\log \tau]^{-1} \pi \cdot e^{ 2m \pi  \i  [(1/4)t +(1/2)( \lfloor t^{2/3}\alpha \rfloor - 1 ) -  t^{1/3} \tilde{r} ]}}{\sin(-\pi  [\log \tau ]^{-1} [ \log z - \log w  -  2m \pi  \i  ])} \right| \leq B_{\tau,2}.
\end{equation}
\end{lemma}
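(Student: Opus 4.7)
The plan is to closely mirror the proof of Lemma \ref{S42L2}: bound the inverse sine in each summand via the elementary estimate (\ref{S2BoundSine}), then sum the resulting exponentially decaying series in $m\in\mathbb{Z}$. Since the bracketed exponent $(1/4)t+(1/2)(\lfloor t^{2/3}\alpha\rfloor-1)-t^{1/3}\tilde{r}$ is real, the factor $e^{2m\pi \i[\cdots]}$ has modulus one, so controlling the inverse sine uniformly in $w\in C_w$, $z\in C_z$, and $m\in\mathbb{Z}$ is all that is required.

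First I would parametrize $w = e\cdot e^{\i\theta_w}$ and $z = \tau^{3/4}e^{\i\theta_z}$ with $\theta_w,\theta_z\in(-\pi,\pi]$, and set $L = -\log\tau$ so that $L\geq 8\pi$. A direct computation using the principal branch gives
\[
-\pi[\log\tau]^{-1}[\log z-\log w-2\pi \i m] = \Bigl(-\tfrac{3\pi}{4}-\tfrac{\pi}{L}\Bigr) + \tfrac{\i \pi}{L}(\theta_z-\theta_w-2\pi m).
\]
The crucial observation is that the real part is independent of $m$ and, because $\pi/L\leq 1/8$, lies in $[-3\pi/4-1/8,-3\pi/4]$; hence its distance to $\mathbb{Z}$ is bounded below by the universal constant $3\pi/4-2-1/8>0$. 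The bound (\ref{S2BoundSine}) then supplies a universal $c'>0$ with
\[
\left|\frac{1}{\sin(-\pi[\log\tau]^{-1}[\log z-\log w-2\pi \i m])}\right| \leq c'\exp\!\Bigl(-\tfrac{\pi^2}{L}|\theta_z-\theta_w-2\pi m|\Bigr).
\]

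Since $|\theta_z-\theta_w|\leq 2\pi$, we have $|\theta_z-\theta_w-2\pi m|\geq 2\pi(|m|-1)$ for all $|m|\geq 1$, and so the $m$-th summand of (\ref{S61E2}) is bounded in absolute value by $\pi c'/L$ times $\exp(-2\pi^3(|m|-1)/L)$ when $|m|\geq 1$, and by $\pi c'/L$ when $m=0$. Summing the resulting geometric series over $m\in\mathbb{Z}$ yields the finite constant
\[
B_{\tau,2} = \tfrac{\pi c'}{L}\left(1+\tfrac{2}{1-e^{-2\pi^3/L}}\right),
\]
which depends only on $\tau$, completing the proof.

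The main (and rather mild) obstacle is verifying the uniform-in-$m$ separation of the real part from $\mathbb{Z}$; this is precisely what the hypothesis $\tau\leq e^{-8\pi}$ built into Definition \ref{S6Cont} is designed to guarantee. No steepest-descent work is required here, as the contours $C_w, C_z$ are fixed and independent of $t$, which is what makes the estimate $\tau$-dependent but uniform in $t,\alpha,\tilde{r}$.
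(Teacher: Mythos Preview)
Your approach is essentially the same as the paper's: parametrize $w,z$ on the fixed circles, observe that the real part of the sine argument is bounded away from $\pi\mathbb{Z}$ (uniformly in $m$), invoke an elementary bound on $1/|\sin|$ to get geometric decay in $m$, and sum. The paper cites \cite[Equation (6.14)]{ED18} for the sine estimate while you use (\ref{S2BoundSine}); these are interchangeable here.

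One small slip to fix: in (\ref{S2BoundSine}) the hypothesis is that $x+\i y$ (with the sine written as $\sin(\pi x+\i\pi y)$) stays away from $\mathbb{Z}$, so you should be checking the distance of $-3/4-1/L$ (not $-3\pi/4-\pi/L$) from $\mathbb{Z}$, and the resulting decay rate is $\exp\bigl(-\tfrac{\pi}{L}|\theta_z-\theta_w-2\pi m|\bigr)$ rather than $\exp\bigl(-\tfrac{\pi^2}{L}|\cdots|\bigr)$. This does not affect the argument, since either exponent gives a summable geometric series; only the explicit constant $B_{\tau,2}$ changes.
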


\begin{lemma}\label{S61L3} Let $\tau, C_w, C_z$ be as in Definition \ref{S6Cont}. There exists a constant $b_{\tau,3} > 0$, depending on $\tau$ alone, such that for all $t \geq 1$, $\alpha \in \mathbb{R}$, $w \in C_w$, and $z \in C_z$ we have
\begin{equation}\label{S61E3}
\left| \left( \frac{(1 + z )e^{-\log z /2} }{(1 + w) e^{-\log w/2} } \right)^{\lfloor t^{2/3} \alpha \rfloor -1} \right| \leq \exp \left( b_{\tau,3}  \cdot  t^{2/3} \cdot (1 + |\alpha|) \right).
\end{equation}
\end{lemma}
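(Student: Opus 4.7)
\textbf{Proof plan for Lemma \ref{S61L3}.} The plan is to show that for $w \in C_w$ and $z \in C_z$, the modulus $|f(z)/f(w)|$ of the base is bounded above and below by positive constants depending only on $\tau$, where
\[
f(u) := (1+u) \, e^{-\log u / 2}.
\]
Since the exponent $\lfloor t^{2/3}\alpha\rfloor - 1$ is an integer, bounding the base then yields the claimed estimate after taking logs and absorbing the integer part.

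First I would exploit the identity $|e^{-\log u/2}| = e^{-\Re(\log u)/2} = |u|^{-1/2}$, which is independent of the branch choice, so that $|f(u)| = |1+u| \cdot |u|^{-1/2}$. On $C_w$, which is the circle of radius $R_w = e$ about the origin, one has $|w|^{-1/2} = e^{-1/2}$ and $e - 1 \leq |1+w| \leq e + 1$, giving
\[
(e-1)\, e^{-1/2} \leq |f(w)| \leq (e+1)\, e^{-1/2}.
\]
On $C_z$, the circle of radius $R_z = \tau^{3/4}$ about the origin, one has $|z|^{-1/2} = \tau^{-3/8}$ and $1 - \tau^{3/4} \leq |1+z| \leq 1 + \tau^{3/4}$, which (since $\tau \in (0, e^{-8\pi}]$ ensures $\tau^{3/4} < 1$) gives
\[
(1-\tau^{3/4})\, \tau^{-3/8} \leq |f(z)| \leq (1+\tau^{3/4})\, \tau^{-3/8}.
\]
Both of these estimates depend only on $\tau$, so the quotient $|f(z)/f(w)|$ is sandwiched between two positive constants depending only on $\tau$, and hence
\[
\bigl| \log |f(z)/f(w)| \bigr| \leq B_\tau
\]
for some constant $B_\tau > 0$ depending only on $\tau$.

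To finish, I would use that $t \geq 1$ implies $|\lfloor t^{2/3}\alpha\rfloor - 1| \leq t^{2/3}|\alpha| + 1 \leq t^{2/3}(1+|\alpha|)$, so that
\[
\left| \left(\frac{f(z)}{f(w)}\right)^{\lfloor t^{2/3}\alpha\rfloor - 1} \right| = \exp\!\left( \bigl(\lfloor t^{2/3}\alpha\rfloor - 1\bigr) \log |f(z)/f(w)| \right) \leq \exp\!\left( B_\tau \cdot t^{2/3}(1+|\alpha|) \right),
\]
and we take $b_{\tau,3} := B_\tau$. There is no real obstacle: the lemma is essentially a direct computation exploiting the fact that the contours $C_w$ and $C_z$ are disjoint from the pole $u = -1$ and from the origin, uniformly in a way that depends only on $\tau$.
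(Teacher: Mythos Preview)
Your approach is essentially the same as the paper's: bound the modulus of the base $|f(z)/f(w)|$ above and below by positive $\tau$-dependent constants via $|f(u)|=|1+u|\,|u|^{-1/2}$, then control the integer exponent. The paper sharpens the lower bound to $|f(z)/f(w)|\geq 1$ (using $\tau\leq e^{-8\pi}$) and takes $b_{\tau,3}=-\log\tau$, but your softer two-sided bound is perfectly adequate. One small arithmetic slip: the inequality $|\lfloor t^{2/3}\alpha\rfloor-1|\leq t^{2/3}|\alpha|+1$ fails (e.g.\ $t^{2/3}\alpha=-1/2$ gives left side $2$, right side $3/2$); the correct bound is $\leq t^{2/3}|\alpha|+2\leq 2t^{2/3}(1+|\alpha|)$ for $t\geq 1$, so take $b_{\tau,3}=2B_\tau$.
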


\begin{lemma}\label{S61L4} Let $\tau, C_w, C_z$ be as in Definition \ref{S6Cont}. There exists a constant $b_{\tau,4} > 0$, depending on $\tau$ alone, such that for all $t \geq 1$, $ \tilde{r} \in \mathbb{R}$, $w \in C_w$, and $z \in C_z$ we have
\begin{equation}\label{S61E4}
\left|\frac{(-w ;\tau)_{\infty} (z^2  ;\tau)_{\infty} }{(-z ;\tau)_{\infty} (z w  ;\tau)_{\infty} \cdot z } \cdot \exp \left( t^{1/3} \tilde{r} [ \log z  -  \log w] \right)  \right| \leq \exp \left( b_{\tau,4}  \cdot  t^{1/3} \cdot (1 + |\tilde{r}|) \right).
\end{equation}
\end{lemma}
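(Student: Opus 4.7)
The plan is to bound each factor on the left-hand side of (\ref{S61E4}) separately, noting that all potentially singular terms are comfortably bounded away from their singularities thanks to the choice of contours in Definition \ref{S6Cont}. On $C_w$ we have $|w|=e$ and on $C_z$ we have $|z|=\tau^{3/4}$, and the assumption $\tau\leq e^{-8\pi}$ will ensure $|zw|=e\tau^{3/4}<1$ (since $8\pi>4/3$), which is what lets the $q$-Pochhammer symbols in the denominator be bounded below by positive $\tau$-dependent constants.

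For the numerator, I would apply the standard estimate $|(v;\tau)_\infty|\leq\prod_{n=0}^\infty(1+|v|\tau^n)$, which yields $|(-w;\tau)_\infty|\leq \prod_{n=0}^\infty(1+e\tau^n)$ and $|(z^2;\tau)_\infty|\leq\prod_{n=0}^\infty(1+\tau^{3/2+n})$; both are finite since $\tau<1$. For the denominator, I would use $|(v;\tau)_\infty|\geq\prod_{n=0}^\infty(1-|v|\tau^n)$ when $|v|<1$, giving $|(-z;\tau)_\infty|\geq\prod_{n=0}^\infty(1-\tau^{3/4+n})>0$ and $|(zw;\tau)_\infty|\geq\prod_{n=0}^\infty(1-e\tau^{3/4+n})>0$. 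The factor $1/|z|$ contributes $\tau^{-3/4}$, a $\tau$-dependent constant. Collecting these four bounds gives a constant $C_\tau>0$, depending only on $\tau$, such that
\begin{equation*}
\left|\frac{(-w ;\tau)_{\infty} (z^2  ;\tau)_{\infty} }{(-z ;\tau)_{\infty} (z w  ;\tau)_{\infty} \cdot z } \right| \leq C_\tau \quad \text{for all } w\in C_w,\, z\in C_z.
\end{equation*}

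For the exponential factor, since we take the principal branch of the logarithm, $\Re[\log z-\log w]=\log|z|-\log|w|=(3/4)\log\tau-1$, which is a negative constant depending only on $\tau$. Therefore
\begin{equation*}
\left|\exp(t^{1/3}\tilde{r}[\log z-\log w])\right| = \exp\bigl(t^{1/3}\tilde{r}[(3/4)\log\tau-1]\bigr) \leq \exp\bigl(t^{1/3}|\tilde{r}|\cdot M_\tau\bigr),
\end{equation*}
where $M_\tau:=1-(3/4)\log\tau>0$.

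Combining the two bounds, the left-hand side of (\ref{S61E4}) is at most $C_\tau\exp(M_\tau\, t^{1/3}|\tilde{r}|)$. Since $t\geq 1$, we have $\log C_\tau\leq (\log^+ C_\tau)\cdot t^{1/3}$, so taking $b_{\tau,4}:=\max(\log^+ C_\tau,\, M_\tau)$ yields $C_\tau \exp(M_\tau t^{1/3}|\tilde r|)\leq \exp(b_{\tau,4}\, t^{1/3}(1+|\tilde{r}|))$, as required. There is no real obstacle here; the entire argument is a routine application of the elementary Pochhammer bounds recorded in (\ref{DR1}), made possible by the fact that both contours $C_w$ and $C_z$ stay strictly away from the zeros of all the Pochhammer factors (a property that is built into Definition \ref{S6Cont}).
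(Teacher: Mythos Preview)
Your proof is correct and follows essentially the same approach as the paper's own proof: both bound the Pochhammer ratio by the elementary estimates recorded in (\ref{DR1}) to produce a $\tau$-dependent constant, compute $\Re[\log z-\log w]=(3/4)\log\tau-1$ to control the exponential factor, and absorb the constant into the exponential using $t\geq 1$. The only cosmetic difference is that the paper packages its constant as $b_{\tau,4}=-\log\tau+\log B_{\tau,4}$ while you take $b_{\tau,4}=\max(\log^+C_\tau,M_\tau)$; both work.
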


\begin{lemma}\label{S61L5} Let $\tau, C_w, C_z$ be as in Definition \ref{S6Cont}, and fix $k \in \mathbb{N}$. Suppose that $t > 0$, $w_a \in C_w$, $z_a \in C_z$ for $a =1 , \dots, k$. Then, we have
\begin{equation}\label{S61E5}
\left| \tilde{B}_1(\vec{w}, \vec{z}; t)  \right| \leq k^{k/2} \cdot \tau^{-3k/8} \cdot [ 2 \tau^{3/8}]^{k^2}.
\end{equation}
\end{lemma}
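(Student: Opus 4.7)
The plan is to apply Lemma~\ref{DetBounds}(ii) directly to the Cauchy-type determinant $\tilde{B}_1$. From Definition~\ref{S6Cont} the contours put $|w_a| = R_w = e$ and $|z_b| = R_z = \tau^{3/4}$, and since $\tau \leq e^{-8\pi}$ forces $\tau^{3/4} \ll e$, the hypotheses of Lemma~\ref{DetBounds}(ii) hold with $R = e$, $r = \tau^{3/4}$ and $N = k$. The determinant $\det[1/(w_a - z_b)]$ differs from $\det[1/(z_a-w_b)]$ only by a sign, so the lemma applies verbatim and yields
\[
|\tilde{B}_1(\vec{w}, \vec{z}; t)| \;\leq\; e^{-k} \cdot \frac{k^k \cdot (\tau^{3/4}/e)^{\binom{k}{2}}}{(1 - \tau^{3/4}/e)^{k^2}}.
\]

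Next I would simplify the right-hand side into the claimed form. The assumption $\tau \leq e^{-8\pi}$ implies $\tau^{3/4}/e \leq e^{-6\pi - 1} \leq 1/2$, so $(1 - \tau^{3/4}/e)^{k^2} \geq 2^{-k^2}$; this produces precisely the $2^{k^2}$ present in the target bound. Gathering the powers of $e$ via $e^{-k} \cdot e^{-\binom{k}{2}} = e^{-k(k+1)/2}$ I obtain
\[
|\tilde{B}_1| \;\leq\; k^k \cdot e^{-k(k+1)/2} \cdot 2^{k^2} \cdot \tau^{(3/4)\binom{k}{2}}.
\]

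The crucial elementary step is the inequality $k^{k/2} \leq e^{k(k+1)/2}$ for every $k \geq 1$, which is just $\log k \leq k+1$ after taking logarithms. This allows me to absorb an extra factor of $k^{k/2}$ into the exponential decay, giving $k^k \cdot e^{-k(k+1)/2} \leq k^{k/2}$. Finally I would rewrite $\tau^{(3/4)\binom{k}{2}} = \tau^{(3/8)(k^2 - k)} = \tau^{-3k/8} \cdot (\tau^{3/8})^{k^2}$ and pair $(\tau^{3/8})^{k^2}$ with $2^{k^2}$ to recover the target form $k^{k/2} \cdot \tau^{-3k/8} \cdot (2\tau^{3/8})^{k^2}$. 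The argument is a direct application of the pre-existing determinant bound combined with a small amount of arithmetic, so there is no substantive obstacle; the only point worth noting is that the radii $R_w = e$ and $R_z = \tau^{3/4}$ of Definition~\ref{S6Cont} are tuned so that the factor $e^{-k(k+1)/2}$ coming from the normalization $R^{-N} \cdot R^{-\binom{N}{2}}$ in Lemma~\ref{DetBounds}(ii) is strong enough to demote the $k^k = N^N$ of that lemma to the sharper $k^{k/2}$ appearing in the statement.
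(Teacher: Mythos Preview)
Your proof is correct. The paper's own proof instead invokes an external Cauchy--determinant bound, \cite[Lemma~8.5]{ED2020}, which for the choice $R=e$, $r=\tau^{3/4}$, $\alpha=e^{-1}\tau^{3/8}$ gives directly
\[
\left|\det\left[\frac{1}{w_a-z_b}\right]_{a,b=1}^k\right|\le \frac{k^{k/2}}{\tau^{3k/8}(e-1)^k}\cdot\left(\frac{\tau^{3/8}(1+e)}{e+\tau^{3/4}}\right)^{k^2},
\]
and then notes $(e-1)^{-1}\le 1$ and $(1+e)/(e+\tau^{3/4})\le 2$ to reach the target. Your route is slightly different and in some sense more self-contained: you use only Lemma~\ref{DetBounds}(ii), already stated in the paper, which yields the weaker factor $k^k$ rather than $k^{k/2}$, and then you exploit the extra decay $e^{-k(k+1)/2}$ coming from the radius $R_w=e$ (via $R^{-N}R^{-\binom{N}{2}}$) together with the elementary inequality $\log k\le k+1$ to demote $k^k$ to $k^{k/2}$. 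The paper's approach is cleaner because the sharper determinant bound does that work already; your approach avoids the external reference at the cost of one additional arithmetic step. Both lead to the same final estimate.
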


\begin{lemma}\label{S61L6} Let $\tau, C_w, C_z$ be as in Definition \ref{S6Cont}. We can find a universal constant $B_6 > 0$, such that for $t > 0$, $k \in \mathbb{N}$, $w_a \in C_w$, $z_a \in C_z$ for $a =1 , \dots, k$
\begin{equation}\label{S61E6}
\left| \tilde{B}_2(\vec{w}, \vec{z}; t)  \right| \leq B_6^{k^2}.
\end{equation}
\end{lemma}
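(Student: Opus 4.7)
The plan is to estimate each of the $\binom{k}{2}$ factors in $\tilde{B}_2$ pointwise, uniformly in $\vec{w}\in C_w^k$, $\vec{z}\in C_z^k$, and $\tau\in(0,e^{-8\pi}]$. Since $|w_a|=e$ and $|z_a|=\tau^{3/4}$, for every pair $(a,b)$ the relevant moduli are
\begin{equation*}
|w_aw_b|=e^2,\qquad |z_az_b|=\tau^{3/2},\qquad |z_aw_b|=|w_az_b|=e\,\tau^{3/4}.
\end{equation*}
For the numerators I will apply the trivial inequality $|1-x\tau^m|\leq 1+|x|\tau^m$ to get
\begin{equation*}
|(w_aw_b;\tau)_{\infty}|\leq P_1(\tau):=\prod_{m=0}^{\infty}(1+e^2\tau^m),\qquad |(z_az_b;\tau)_{\infty}|\leq P_2(\tau):=\prod_{m=0}^{\infty}(1+\tau^{3/2+m}).
\end{equation*}
For the denominators, I will use $|1-x\tau^m|\geq 1-|x|\tau^m$, which is legitimate since under $\tau\leq e^{-8\pi}$ we have $e\,\tau^{3/4+m}\leq e^{1-6\pi}\cdot e^{-8\pi m}<1$ for every $m\geq 0$; this yields
\begin{equation*}
|(z_aw_b;\tau)_{\infty}|,\;|(w_az_b;\tau)_{\infty}|\;\geq\; P_3(\tau):=\prod_{m=0}^{\infty}(1-e\,\tau^{3/4+m}).
\end{equation*}

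The second step is to verify that $P_1,P_2$ are bounded above and $P_3$ is bounded below by positive constants independent of $\tau\in(0,e^{-8\pi}]$. The products $P_1,P_2$ are monotone increasing in $\tau$, so it suffices to evaluate them at $\tau=e^{-8\pi}$, where all factors with $m\geq 1$ are exponentially close to $1$. The product $P_3$ is monotone decreasing in $\tau$, and at $\tau=e^{-8\pi}$ one has $e\,\tau^{3/4+m}\leq e^{1-6\pi}\cdot e^{-8\pi m}$, so the elementary bound $\prod_{m\geq 0}(1-a_m)\geq 1-\sum_{m\geq 0}a_m$ (valid while the right side stays positive) gives
\begin{equation*}
P_3(\tau)\;\geq\; 1-\frac{e^{1-6\pi}}{1-e^{-8\pi}},
\end{equation*}
a strictly positive universal constant.

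Combining these two steps, every factor in $\tilde{B}_2$ is bounded in absolute value by the universal constant $M:=\sup_{\tau\in(0,e^{-8\pi}]} P_1(\tau)P_2(\tau)/P_3(\tau)^2$, and since there are $\binom{k}{2}\leq k^2$ factors, setting $B_6:=\max\{1,M\}$ gives $|\tilde{B}_2(\vec{w},\vec{z};t)|\leq B_6^{\binom{k}{2}}\leq B_6^{k^2}$, which is the claim. There is no real obstacle here; the estimate is entirely elementary. The only point requiring care is the verification that the denominator $q$-Pochhammers are bounded away from zero uniformly in $\tau$, which is exactly what the hypothesis $\tau\leq e^{-8\pi}$ secures by pushing $e\tau^{3/4}$ well below one.
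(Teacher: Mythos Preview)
Your proof is correct and follows essentially the same approach as the paper: both bound each of the $\binom{k}{2}$ factors via the elementary Pochhammer estimates $|(x;\tau)_\infty|\le \prod_m(1+|x|\tau^m)$ and $|(x;\tau)_\infty|\ge \prod_m(1-|x|\tau^m)$, then use monotonicity in $\tau$ to replace the $\tau$-dependent bounds by their values at $\tau=e^{-8\pi}$. The paper writes the resulting constant directly as $B_6=\dfrac{(-e^{2};e^{-8\pi})_\infty\,(-e^{-12\pi};e^{-8\pi})_\infty}{(e^{1-6\pi};e^{-8\pi})_\infty^{2}}$, whereas you additionally invoke $\prod(1-a_m)\ge 1-\sum a_m$ to make the positivity of the denominator explicit; this is a minor presentational difference, not a different argument.
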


%
\subsection{Proof of Proposition \ref{S3MainP2}: Part I}\label{Section5.2} In this section we specify the choice of $\tau_0$, $A$ and $T$ as in the statement of Proposition \ref{S3MainP2} and prove (\ref{S3KE2}) when $1 \leq k \leq t$. 

Let $\rr$ be as in Lemma \ref{S4LemmaTaylor} and $\cc$ as in Lemma \ref{S5CTBound}.  We assume that $\tau_0 \in (0,1)$ is sufficiently small so that the following all hold for $\tau \in (0, \tau_0]$
\begin{equation}\label{S5IneqTau}
\begin{split}
\tau \leq e^{-8 \pi}, \hspace{2mm} \cc \tau \leq a_{\rr,1}/2, \hspace{2mm} 64 \pi A(\tau) \leq \rr^2 a_{\rr,1}, \hspace{2mm} 4 \tau^{1/8} B_6 \leq 1,
\end{split}
\end{equation}
where $a_{\rr,1}$ as in Lemma \ref{S42L1}, $A(\cdot)$ is as in (\ref{AtauDef}), and $B_6$ is as in Lemma \ref{S61L6}. This specifies $\tau_0$ in the statement of the proposition. Below we fix $\tau \in (0, \tau_0]$.

We let $A \geq 1$ be sufficiently large so that for $t \geq 1$ we have
\begin{equation}\label{S5IneqA}
\begin{split}
&A_{\rr,1} A_{\rr,2} A_{\rr,4} A_{\rr,5} \int_{\Gamma_{u,\rr, t}}  \exp \left(  (a_{\rr,3}+1) |u|^2 + |\tilde{r}| |u| - (a_{\rr,1}/4) |u|^3  \right) |du|  \\
& \times \int_{\Gamma_{v,\rr, t}} |v| \exp \left(  (a_{\rr,3}+1) |v|^2 + |\tilde{r}| |v| - (a_{\rr,1}/4) |v|^3  \right) |dv|  \leq A,
\end{split}
\end{equation}
where $\rr$ is as in Lemma \ref{S4LemmaTaylor}, $\Gamma_{u,\rr, t}$ and $\Gamma_{v,\rr, t}$ are as in Definition \ref{S4Contours}, $A_{\rr,1}, a_{\rr,1}$ is as in Lemma \ref{S42L1}, $A_{\rr,2}$ is as in Lemma \ref{S42L2}, $a_{\rr,3}$ is as in Lemma \ref{S42L3}, $A_{\rr,4}$ is as in Lemma \ref{S42L4}, $A_{\rr,5}$ is as in Lemma \ref{S42L5}, and $\tilde{r}$ is as in the statement of the proposition.  In (\ref{S5IneqA}) the integrals are with respect to arc-length. This specifies $A$ in the statement of the proposition.

We now fix $T \geq 1$ sufficiently large so that for $t \geq T$ we have
\begin{equation}\label{S5IneqT}
\begin{split}
& t \geq \max_{1 \leq i \leq 5} T_{\rr, i}, \hspace{2mm}  t^{-1/3} \leq \rr, \hspace{2mm} t^{-2/3} A(e^{-8\pi}) \leq 1/2, \mbox{ and for $k \geq T$ we have } \\
&  \frac{k^{k/2}}{k!} \cdot e^{k} \cdot 2^{-k^2} \cdot B_{\tau,2}^k \cdot \exp \left( k^{5/3} b_{\tau,3} (1 + |\alpha|) +  k^{4/3} b_{\tau,4} (1 + |\tilde{r}|)  \right) \leq k^{-k/2},
\end{split}
\end{equation}
where $T_{\rr,i}$ for $i = 1,\dots, 5$ are as in Lemmas \ref{S42L1} - \ref{S42L5} (here $\tau_1 = e^{-8\pi}$ and so $e^{20 \rr} \leq e^{10} \leq \tau_1^{-1}$). In addition, $A(\cdot)$ is as in (\ref{AtauDef}), $B_{\tau,2}$ is as in Lemma \ref{S61L2}, $b_{\tau,3}$ is as in Lemma \ref{S61L3}, $b_{\tau,4} $ is as in Lemma \ref{S61L4}, $\alpha, \tilde{r}$ are as in the statement of the proposition. This specifies $T$ in the statement of the proposition. \\

In the remainder of this section we assume $t \geq T$, $\tau \in (0, \tau_0]$, $1 \leq k \leq t$ and proceed to prove (\ref{S3KE2}). It follows from (\ref{S4Ikt}) that 
\begin{equation}\label{S5Ikt1}
\begin{split}
\left| I(k,t) \right| \leq  \frac{1}{k! (2\pi )^{2k}} \int_{\Gamma^k_{u,\rr, t} } |d\vec{u}|  \int_{\Gamma^k_{v,\rr, t} } |d \vec{v}| & \left|\prod_{i = 1}^5 A_i ( \vec{u}, \vec{v}; t) \prod_{i = 1}^2 B_i( \vec{u}, \vec{v}; t) \right|,
\end{split}
\end{equation}
where $A_i ( \vec{u}, \vec{v}; t) $ for $i = 1,\dots, 5$ are as in (\ref{S41As}), $B_j( \vec{u}, \vec{v}; t)$ for $j = 1,2$ are as in (\ref{S41Bs}), $|d\vec{u}| = |du_1| \cdots |du_k|$, and $|d\vec{v}| = |dv_1|\cdots |dv_k|$. We mention that in obtaining (\ref{S5Ikt1}) we implicitly used that $t^{-1/3} \leq \rr$, as follows from the second inequality in (\ref{S5IneqT}), and that $e^{20 \rr} \leq e^{10} \leq \tau^{-1}$ as follows from the first inequality in (\ref{S5IneqTau}).

It follows from Lemmas \ref{S42L1}-\ref{S42L5} that if $\vec{u} \in \Gamma^k_{u,\rr,t}$ and $\vec{v} \in \Gamma^k_{v,\rr,t}$ we have 
\begin{equation*}
\begin{split}
&\left| \prod_{i=1}^5 A_i(\vec{u}, \vec{v}; t) \cdot B_1(\vec{u}, \vec{v}; t) \right| \leq A_{\rr,1}^k A_{\rr,2}^k A_{\rr,4}^k A_{\rr,5}^k \cdot k^{k/2} \cdot \prod_{a = 1}^k |v_a|  \\
& \times \exp \left( \sum_{a = 1}^k a_{\rr,3} |u_a|^2 + |\tilde{r}| |u_a| - a_{\rr,1} |u_a|^3 + a_{\rr,3} |v_a|^2 + |\tilde{r}| |v_a| - a_{\rr,1} |v_a|^3   \right).
\end{split}
\end{equation*}
In addition, from Lemma \ref{S5CTBound} we have 
\begin{equation*}
\begin{split}
&\left| B_2(\vec{u}, \vec{v}; t) \right| \leq \exp \left( - A(\tau) t^{-2/3}\cdot \Re \left[\left( \sum_{a = 1}^k (u_a  - v_a)  \right)^2 \right] \right)  \cdot \\
& \exp \left( A(\tau) t^{-2/3} \cdot \Re \left[ \sum_{a = 1}^k (u_a - v_a)^2 \right] \right) \cdot \prod_{a = 1}^k \exp \left(\cc \tau k t^{-1}  \cdot \sum_{a = 1}^k (|u_a|^3 + |v_a|^3) \right).
\end{split}
\end{equation*}
We next observe that our assumptions give the inequalities 
$$A(\tau) t^{-2/3} \leq 1/2, \hspace{2mm}\cc \tau \leq a_{\rr,1}/2, \hspace{2mm} k \leq t, \hspace{2mm} |\Re[z_1-z_2]^2| \leq 2(|z_1|^2 + |z_2|^2),$$ 
where the first inequality used the first inequality in (\ref{S5IneqTau}) and the third inequality in (\ref{S5IneqT}). 

Combining the last three inequalities, we get
\begin{equation}\label{S5S3E1}
\begin{split}
& \left| \prod_{i=1}^5 A_i(\vec{u}, \vec{v}; t)  \prod_{i=1}^2 B_2(\vec{u}, \vec{v}; t) \right| \leq  A_{\rr,1}^k A_{\rr,2}^k A_{\rr,4}^k A_{\rr,5}^k \cdot k^{k/2} \cdot \prod_{a = 1}^k |v_a| \\
& \times \exp \left( \sum_{a = 1}^k (a_{\rr,3}+1) (|u_a|^2 + |v_a|^2) + |\tilde{r}| ( |u_a|  + |v_a|) - (a_{\rr,1}/2) (|u_a|^3 + |v_a|^3)   \right) \\
& \times \exp \left( - A(\tau) t^{-2/3}\cdot \Re \left[\left( \sum_{a = 1}^k (u_a  - v_a)  \right)^2 \right] \right).
\end{split}
\end{equation}

We next claim that 
\begin{equation}\label{S5S3E2}
\begin{split}
- A(\tau) t^{-2/3}\cdot \Re \left[\left( \sum_{a = 1}^k (u_a  - v_a)  \right)^2 \right]   \leq \frac{a_{\rr,1}}{4} \sum_{a = 1}^k (|u_a|^3 + |v_a|^3) .
\end{split}
\end{equation}
To see why (\ref{S5S3E2}) holds let us denote by $S$ the set indices $s$ in $\{1, \dots, k\}$ such that $ u_s \in \Gamma_{u, \rr, t}^0$ and $ v_s \in \Gamma_{v, \rr, t}^0$, where we recall that $\Gamma_{u, \rr, t}^0, \Gamma_{v, \rr, t}^0$ were defined in Definition \ref{S4Contours}. We also set $S^c = \{1, \dots, k \} \setminus S$. Using that $|t^{-1/3} u_a| \leq 2\pi$ and $|t^{-1/3} v_a| \leq 2\pi$, we see that 
\begin{equation*}
\begin{split}
- t^{-2/3}\cdot \Re \left[\left( \sum_{a = 1}^k (u_a  - v_a)  \right)^2 \right] \leq - t^{-2/3}\cdot \Re \left[\left( \sum_{a \in S} (u_a  - v_a)  \right)^2 \right] + 8 k \pi  t^{-1/3} \sum_{a \in S^c} \left( |u_a| + |v_a| \right).
\end{split}
\end{equation*}
We also have for $a \in S^c$ that $|u_a|  \geq \rr \cdot t^{1/3}$ or $|v_a|  \geq \rr \cdot t^{1/3}$ (or both), which implies that for $a \in S^c$
\begin{equation*}
\begin{split}
  8 A(\tau)  k \pi  t^{-1/3} (|u_a| + |v_a|) \leq 8 A(\tau) \pi  t^{2/3} (|u_a| + |v_a|) \leq \frac{a_{1,\rr}}{4} (|u_a|^3 + |v_a|^3). 
\end{split}
\end{equation*}
We remark that in the first inequality we used that $k \leq t$ and in the second one we used the third inequality in (\ref{S5IneqTau}) and the fact that $2(x^3 + y^3) \geq  (x+y)(x^2 + y^2)$ for $x,y \geq 0$.

Finally, we note that for $a \in S$ we have $u_a = x_a + \i \epsilon_a x_a$ and $v_a = - 1 - y_a + \i \delta_a y_a$ where $x_a, y_a \geq 0$ and $\epsilon_a, \delta_a \in \{-1, 1\}$. The latter implies that 
$$\Re \left[\left( \sum_{a \in S} (u_a  - v_a)  \right)^2 \right]  =\left( \sum_{a \in S} \left(x_a + y_a + 1 \right) \right)^2 - \left( \sum_{a \in S} \left(\epsilon_a x_a - \delta_a y_a \right)  \right)^2 \geq 0.$$
Combining the last three inequalities, we deduce (\ref{S5S3E2}).\\

We now combine (\ref{S5S3E1}) and (\ref{S5S3E2}) to conclude that
\begin{equation}\label{S5S3E3}
\begin{split}
& \left| \prod_{i=1}^5 A_i(\vec{u}, \vec{v}; t)  \prod_{i=1}^2 B_2(\vec{u}, \vec{v}; t) \right| \leq   A_{\rr,1}^k A_{\rr,2}^k A_{\rr,4}^k A_{\rr,5}^k \cdot k^{k/2} \cdot \prod_{a = 1}^k |v_a| \\
& \times \exp \left( \sum_{a = 1}^k (a_{\rr,3}+1) (|u_a|^2 + |v_a|^2) + |\tilde{r}| ( |u_a|  + |v_a|) - (a_{\rr,1}/4) (|u_a|^3 + |v_a|^3)   \right).
\end{split}
\end{equation}
From (\ref{S5IneqA}), (\ref{S5Ikt1}) and (\ref{S5S3E3}) we conclude that 
$$\left| I(k,t) \right| \leq \frac{k^{k/2}}{k! (2\pi)^{2k}} \cdot A^k \leq \frac{k^{k/2}}{k^{k} e^{-k} (2\pi)^{2k}} \cdot A^k \leq k^{-k/2} \cdot A^k,$$
where in the middle inequality we used that $k! \geq k^{k} e^{-k}$, see (\ref{S2Rob}). The last inequality proves (\ref{S3KE2}) when $1 \leq k \leq t$.

%
\subsection{Proof of Proposition \ref{S3MainP2}: Part II}\label{Section5.3} In this section we assume $ k \geq t \geq T$, $\tau \in (0, \tau_0]$, and proceed to prove (\ref{S3KE2}). It follows from (\ref{S6Ikt}) that 
\begin{equation}\label{S5Ikt2}
\begin{split}
\left| I(k,t) \right| \leq  \frac{1}{k! (2\pi )^{2k}} \int_{C_w^k } |d\vec{w}|  \int_{C_z^k } |d \vec{z}| & \left| \prod_{i = 1}^4 \tilde{A}_i ( \vec{w}, \vec{z}; t) \prod_{i = 1}^2 \tilde{B}_i( \vec{w}, \vec{z}; t) \right|,
\end{split}
\end{equation}
where $\tilde{A}_i ( \vec{w}, \vec{z}; t) $ for $i = 1,\dots, 4$ are as in (\ref{S61As}), $\tilde{B}_j( \vec{w}, \vec{z}; t)$ for $j = 1,2$ are as in (\ref{S61Bs}), $|d\vec{w}| = |dw_1| \cdots |dw_k|$, and $|d\vec{z}| = |dz_1|\cdots |dz_k|$ and $|dz_i|$, $|dw_i|$ denote integration with respect to arc-length. We mention that in obtaining (\ref{S5Ikt2}) we implicitly used that $\tau \leq e^{-8\pi}$ as follows from the first inequality in (\ref{S5IneqTau}).

From Lemmas \ref{S61L1}-\ref{S61L6}, and the fact that $k \geq t$, we have for $\vec{w} \in C_w^k$ and $\vec{z} \in C_z^k$ that 
\begin{equation*}
\begin{split}
 \left| \prod_{i = 1}^4 \tilde{A}_i ( \vec{w}, \vec{z}; t) \prod_{i = 1}^2 \tilde{B}_i( \vec{w}, \vec{z}; t) \right| \leq \hspace{2mm} & \tau^{-k^2/4} \cdot B_{\tau,2}^k \cdot \exp \left( k^{5/3} b_{\tau,3} (1 + |\alpha|) +  k^{4/3} b_{\tau,4} (1 + |\tilde{r}|)  \right)\\
& \times k^{k/2} \cdot \tau^{-3k/8} \cdot [ 2 \tau^{3/8}]^{k^2} \cdot B_6^{k^2}.
\end{split}
\end{equation*}
We mention that in applying Lemmas \ref{S61L1}-\ref{S61L6} we used that $0 < \tau \leq e^{-8\pi}$, and $t \geq 1$. Combining the latter with (\ref{S5Ikt2}), the fourth inequality in (\ref{S5IneqTau}), and the fact that the length of $C_w$ is $2\pi e$ and that of $C_z$ is $2\pi \tau^{3/8}$, we conclude that
\begin{equation}\label{NP1}
\begin{split}
& \left| I(k,t) \right| \leq  \frac{k^{k/2}}{k!} \cdot e^{k} \cdot 2^{-k^2} \cdot B_{\tau,2}^k \cdot \exp \left( k^{5/3} b_{\tau,3} (1 + |\alpha|) +  k^{4/3} b_{\tau,4} (1 + |\tilde{r}|)  \right).
\end{split}
\end{equation}
In view of (\ref{NP1}) and the second line in (\ref{S5IneqT}), we conclude that $\left| I(k,t) \right|  \leq k^{-k/2} \leq A^k k^{-k/2}$ as $A \geq 1$ by construction. This proves (\ref{S3KE2}) when $k \geq t$.

%
\subsection{Proof of the lemmas from Section \ref{Section5.1}}\label{Section5.4} In this section we present the proofs of the seven lemmas from Section \ref{Section5.1}.\\

\begin{proof}[Proof of Lemma \ref{S5CTBound}]
We first note that if $\lambda \in [0, e^{-8\pi}]$ and $|z| \leq 4\pi$ then 
$$\log \left(1 - \lambda e^{z} \right) = - \sum_{k = 1}^{\infty} \frac{\lambda^k e^{kz}}{k} = - \sum_{k = 1}^{\infty}  \frac{\lambda^k }{k} \sum_{m = 0}^{\infty} \frac{z^m k^m}{m!} = -\sum_{m = 0}^{\infty}\frac{z^m}{m!} \sum_{k = 1}^{\infty} k^{m-1}\lambda^k, $$
where in the last line the order of the sums can be exchanged by Fubini's theorem as
$$\sum_{k = 1}^{\infty}  \sum_{m = 0}^{\infty}  \frac{\lambda^k }{k} \cdot \frac{|z|^m k^m}{m!} \leq \sum_{k = 1}^{\infty}   \frac{\lambda^k e^{k |z|}}{k} < \infty.$$
In particular, if we set 
$$A_m(\lambda) =  \frac{1}{m!}\sum_{k = 1}^{\infty} k^{m-1}\lambda^{k-1} \mbox{ and } B_m = \frac{1}{m!}\sum_{k = 1}^{\infty} k^{m-1}e^{-8\pi (k-1) }$$
we see that for $\lambda \in [0, e^{-8\pi}]$ and $|z| \leq 4\pi$ we have
\begin{equation}\label{S5L2E1}
0 \leq  A_m(\lambda) \leq  B_m, \hspace{2mm} B := \sum_{m = 0}^{\infty} B_m \cdot (4\pi )^m < \infty \mbox{, and } \log \left(1 - \lambda e^{z} \right) =- \lambda \cdot \sum_{m = 0}^{\infty} A_{m}(\lambda) z^m,
\end{equation}
and the latter is an absolutely convergent in the closed disc of radius $4\pi$. 

We next observe that if $z \in \{ W_a + W_b, Z_a + Z_b, W_a + Z_b, Z_a + W_b\}$ we have $|z| \leq 4\pi$ and so from (\ref{S5L2E1}) we conclude for any $\lambda \in [0, e^{-8\pi}]$ that 
\begin{equation*}
\begin{split}
&\left| \frac{(1 - \lambda e^{W_a + W_b})(1 - \lambda e^{Z_a + Z_b})   }{(1 - \lambda e^{W_a+ Z_b})(1 - \lambda e^{Z_a + W_b})} \right| \\
& = \exp \Big ( - \lambda \sum_{m = 2}^{\infty} A_m(\lambda) \Re \left[ (W_a+ W_b)^m + (Z_a + Z_b)^m - (W_a + Z_b)^m - (Z_a+W_b)^m \right] \Big).
\end{split}
\end{equation*}
In particular, the last equation and (\ref{S5L2E1}) imply
\begin{equation}\label{S5L2E2}
\begin{split}
\left| \frac{(1 - \lambda e^{W_a + W_b})(1 - \lambda e^{Z_a + Z_b})   }{(1 - \lambda e^{W_a+ Z_b})(1 - \lambda e^{Z_a + W_b})} \right| \leq \hspace{2mm} &\exp \left( - 2\lambda A_2(\lambda) \cdot \Re [ (W_a - Z_a)(W_b - Z_b) ]  \right)  \\
&\times \exp \left(  \lambda B \cdot (|W_a|^3 + |W_b|^3 + |Z_a|^3 + |Z_b|^3) \right).
\end{split}
\end{equation}

Taking a product of (\ref{S5L2E2}) over $\lambda = \tau, \tau^2, \cdots$ as well as $1 \leq a < b \leq k$ we see that 
\begin{equation*}
\begin{split}
&\left| \prod_{1 \leq a < b \leq k } \frac{(\tau e^{W_a + W_b}; \tau)_{\infty}(\tau e^{Z_a + Z_b}; \tau)_{\infty}}{(\tau e^{W_a+ Z_b}; \tau)_{\infty}(\tau e^{Z_a + W_b}; \tau)_{\infty}} \right| \leq \exp \left( -2 A(\tau) \cdot \Re \left[ \sum_{1 \leq a < b \leq k} (W_a - Z_a)(W_b - Z_b)  \right]  \right) \\
& \times \exp \left(  \frac{k \cdot \tau B}{1- \tau} \cdot \left( \sum_{a = 1}^k |W_a|^3 + |Z_a|^3 \right) \right),
\end{split}
\end{equation*}
where $A(\tau)$ is as in (\ref{AtauDef}). The last inequality implies (\ref{CTUB1}) with $\cc = B/ (1 - e^{-8\pi})$ once we use
$$2\sum_{1 \leq a < b \leq k} (W_a - Z_a)(W_b - Z_b)  +  \sum_{a = 1}^k (W_a-Z_a)^2 =  \left( \sum_{a = 1}^k (W_a  - Z_a)  \right)^2.$$
\end{proof}

\begin{proof}[Proof of Lemma \ref{S61L1}] Notice that as $u$ varies over $\{ x + \i y \in \mathbb{C}: x = 1, y \in [-\pi, \pi]\}$, the variable $w = e^u$ covers $C_w$. Similarly, as $v$ varies over $\{x + \i y \in \mathbb{C}: x = (3/4)\log \tau, y \in [-\pi, \pi]\}$, the variable $z = e^v$ covers $C_z$. In particular, we see that to prove (\ref{S61E1}) it suffices to show that 
\begin{equation}\label{IO1}
 \Re[F(u)] - \Re[F(v)]  \leq (-1/4) \log \tau,
\end{equation}
where $F$ is as in (\ref{S4DefF}). From (\ref{S4Decay}) we conclude that 
$$ \Re[F(u)] - \Re[F(v)] \leq F(1) - F((3/4) \log \tau) = \frac{1}{1 + e} + \frac{1}{4} - \frac{1}{1 + \tau^{3/4}} - (3/16) \log \tau,$$
which implies (\ref{IO1}) since $\tau \leq e^{-8\pi}$. 
\end{proof}

\begin{proof}[Proof of Lemma \ref{S61L2}] 
Notice that for $w \in C_w$ and $z \in C_z$ we have $\log w = 1 + \i x$, and $\log z= (3/4) \log \tau + \i y$ for some $x,y \in [-\pi, \pi]$. The latter, the fact that $|e^{\i h}| = 1$ for $h \in \mathbb{R}$, and \cite[Equation (6.14)]{ED18} together imply 
\begin{equation*}
\begin{split}
&\left| \frac{ [-\log \tau]^{-1} \pi \cdot e^{ 2m \pi  \i  [(1/4)t +(1/2)( \lfloor t^{2/3}\alpha \rfloor - 1 ) -  t^{1/3} \tilde{r} ]}}{\sin(-\pi  [\log \tau ]^{-1} [ \log z - \log w  -  2m \pi  \i  ])} \right|  =  \frac{ [-\log \tau]^{-1} \pi }{| \sin(-\pi  [\log \tau ]^{-1} [ \log z - \log w  -  2m \pi  \i  ]) |}  \\
& \leq \frac{ [-\log \tau]^{-1} \pi \exp( - \pi [- \log \tau]^{-1} |y - x + 2\pi m| )}{|\sin( 3\pi/2 - 2\pi [-\log \tau]^{-1})|} \leq 2 \pi [-\log \tau]^{-1}\tau^{-2\pi^2} \cdot \tau^{2\pi^2 |m|}.
\end{split}
\end{equation*}
We mention that in the last inequality we used that $\tau \leq e^{-8\pi}$ and so  $|\sin(3\pi/2 - 2\pi [-\log \tau]^{-1})| \geq 1/2$, and also that $x,y \in [-\pi, \pi]$. Summing over $m \in \mathbb{Z}$, we see that (\ref{S61E2}) holds with $B_{\tau,2} = 2 \pi [-\log \tau]^{-1}\tau^{-2\pi^2} \cdot \sum_{m \in \mathbb{Z}} \tau^{2\pi^2 |m|}.$
\end{proof}

\begin{proof}[Proof of Lemma \ref{S61L3}] 
Using that $\tau \in (0, e^{-8\pi}],$ $|z| = \tau^{3/4}$ and $|w| = e$, we see that 
$$ 1 \leq \frac{(1 - \tau^{3/4})\tau^{-3/8} e^{1/2}  }{e + 1} \leq  \frac{|1 + z| \tau^{-3/8} e^{1/2} }{|1 + w|} = \left| \frac{(1 + z )e^{-\log z /2} }{(1 + w) e^{-\log w/2} } \right| \leq \frac{(1 + \tau^{3/4})\tau^{-3/8} e^{1/2}  }{e - 1} \leq \tau^{-1/2}.$$
Combining the latter with the fact that for $t \geq 1$ we have $|\lfloor t^{2/3} \alpha \rfloor -1| \leq 2 t^{2/3} (1 + |\alpha|)$ , we conclude (\ref{S61E3}) with $b_{\tau,3} = - \log \tau$.
\end{proof}

\begin{proof}[Proof of Lemma \ref{S61L4}]
Using that $\tau \in (0, e^{-8\pi}],$ $|z| = \tau^{3/4}$ and $|w| = e$, we see that 
\begin{equation}\label{VS1}
\left| \exp \left( t^{1/3} \tilde{r} [ \log z  -  \log w] \right)  \right| = \exp \left( t^{1/3} \tilde{r} [ (3/4) \log \tau  -  1] \right) \leq \exp \left( - t^{1/3} |\tilde{r}| \log \tau \right).
\end{equation}
In addition, from (\ref{DR1}) and $\tau \in (0, e^{-8\pi}],$ we have
\begin{equation}\label{VS2}
\left|\frac{(-w ;\tau)_{\infty} (z^2  ;\tau)_{\infty} }{(-z ;\tau)_{\infty} (z w  ;\tau)_{\infty} \cdot z } \right| \leq \frac{(-e ;\tau)_{\infty}(- \tau^{3/2}  ;\tau)_{\infty} }{(\tau^{3/4} ;\tau)_{\infty} (e \tau^{3/4} ;\tau)_{\infty}  \cdot \tau^{3/4}} =: B_{\tau,4}.
\end{equation}
Equations (\ref{VS1}), (\ref{VS2}), and the fact that $t \geq 1$ imply (\ref{S61E4}) with $b_{\tau,4} = - \log \tau + \log B_{\tau,4}$. 
\end{proof}

\begin{proof}[Proof of Lemma \ref{S61L5}]
From \cite[Lemma 8.5]{ED2020} with $R = e$, $r = \tau^{3/4}$, $\alpha = e^{-1}\tau^{3/8}$ we have
$$\left| \det \left[ \frac{1}{w_a - z_b} \right]_{a,b = 1}^k \right| \leq  \frac{k^{k/2}}{\tau^{3k/8}(e - 1)^k} \cdot \left( \frac{\tau^{3/8} (1 + e)}{e + \tau^{3/4} }\right)^{k^2}.$$
The latter inequality implies (\ref{S61E5}).
\end{proof}

\begin{proof}[Proof of Lemma \ref{S61L6}]
Using that $|z_a| = \tau^{3/4}$, $|w_a| = e$ for $a = 1, \dots, k$, $\tau \in (0, e^{-8\pi}]$  and (\ref{DR1}), we see that for each $1 \leq a < b \leq k$ we have
\begin{equation*}
\begin{split}
&\left|\frac{(w_aw_b;\tau)_\infty (z_az_b;\tau)_\infty }{(z_aw_b;\tau)_\infty (w_az_b;\tau)_\infty}  \right|  \leq \frac{(-e^{2};\tau)_\infty (- \tau^{3/2};\tau)_\infty }{(e \tau^{3/4};\tau)_\infty (e \tau^{3/4};\tau)_\infty}  \leq \frac{(-e^{2};e^{-8\pi})_\infty (- e^{-12\pi};e^{-8\pi})_\infty }{(e^{1-6\pi};e^{-8\pi})_\infty (e^{1-6\pi};e^{-8\pi})_\infty} .
\end{split}
\end{equation*}
The latter inequality implies (\ref{S61E6}) with $B_6 = \frac{(-e^{2};e^{-8\pi})_\infty (- e^{-12\pi};e^{-8\pi})_\infty }{(e^{1-6\pi};e^{-8\pi})_\infty (e^{1-6\pi};e^{-8\pi})_\infty}$.
\end{proof}

%
\section{Appendix: Proof of Lemma \ref{S1LWD}}\label{Section6}  In the proof we use the same notation as in Definition \ref{Airy21}. For clarity, we split the proof into four steps.\\

{\bf \raggedleft Step 1.} For $y_1, t_1 \in \mathbb{R}$ we define 
\begin{equation}\label{OR1}
G_{t_1}^{2 \rightarrow 1}(y_1) = \det \left( I - \chi_y K_{\infty} \chi_{y} \right)_{L^2(\{t_1\}\times \mathbb{R})}.
\end{equation}
In this step we show that $G_{t_1}^{2 \rightarrow 1}$ is a distribution function (DF) on $\mathbb{R}$. 

From the displayed equation preceding Remark 1.1 in \cite{QR13}, we see that $G_{t_1}^{2 \rightarrow 1}$ as in (\ref{OR1}) is the same as $G_{t_1}^{2 \rightarrow 1}$ from \cite[(1.7)]{QR13}. In particular, from \cite[Theorem 1]{QR13} we have for $y_1,t_1 \in \mathbb{R}$
\begin{equation}\label{OR2}
G_{t_1}^{2 \rightarrow 1}(y_1) = \mathbb{P} \left( \sup_{x \leq t_1}[ \mathcal{A}_2(x) - x^2] \leq y_1 - t_1^2 \cdot {\bf 1}\{t_1 \leq 0\}  \right),
\end{equation}
where $\mathcal{A}_2$ is the Airy$_2$ process. From the almost sure continuity of the Airy$_2$ process, see \cite[Theorem 1.2]{J03}, we conclude that $G_{t_1}^{2 \rightarrow 1}$ is the DF of an extended real-valued random variable taking values in $(-\infty, \infty]$, and so it suffices to show that 
\begin{equation}\label{OR3}
\lim_{y \rightarrow \infty} G_{t_1}^{2 \rightarrow 1}(y) = 1.
\end{equation}
To prove (\ref{OR3}), we note from (\ref{OR2}) that for each $y \in \mathbb{R}$ and $\tilde{y} = y - t_1^2 \cdot {\bf 1}\{t_1 \leq 0\}$ 
\begin{equation}\label{OR4}
\begin{split}
&F_{1}(4^{1/3} \tilde{y}) = \mathbb{P} \left( \sup_{x \in \mathbb{R}} [ \mathcal{A}_2(x) - x^2] \leq \tilde{y} \right) \leq G_{t_1}^{2 \rightarrow 1}(y),
\end{split}
\end{equation}
where $F_{1}$ is the GOE Tracy-Widom distribution \cite{TW96}. We mention that the equality in (\ref{OR4}) was derived by Johansson \cite{J03} (the factor $4^{1/3}$ is omitted in that paper, see \cite[Theorem 1]{CQR13} for the correct statement). Using that $F_1$ is a DF on $\mathbb{R}$, we have $\lim_{y \rightarrow \infty} F_{1}(y)= 1$. The latter statement and (\ref{OR4}) imply (\ref{OR3}), and so $G_{t_1}^{2 \rightarrow 1}$ is a DF on $\mathbb{R}$. We mention that (\ref{OR3}) also follows from \cite[Proposition 1.2]{QR13}.\\

{\bf \raggedleft Step 2.} In this step we prove the third part of the lemma. By using the Fredholm determinant expansion, we have for all $y_1, t_1 \in \mathbb{R}$
\begin{equation}\label{SF1}
\begin{split}
&G_{t_1}^{2 \rightarrow 1}(y_1) = 1 + \sum_{k = 1}^{\infty} \frac{(-1)^k}{k!} \int_{(y_1,\infty)^k} d \vec{\lambda} \det \left[K_{\infty}(t_1, \lambda_a; t_1, \lambda_b) \right]_{a,b = 1}^k  = 1 + \sum_{k = 1}^\infty H_k, \mbox{ where }\\
&  H_k = \frac{(-1)^k}{(2\pi \i)^{2k} k!}  \int_{(y_1,\infty)^k}d \vec{\lambda}  \int_{C_{1, \pi/4}^k}d\vec{w}  \int_{C_{0, 3\pi/4}^k} d\vec{z}  \det \left[ \frac{-2w_a}{z_b^2 - w_a^2} \right]_{a,b = 1}^k \prod_{a = 1}^k \frac{e^{w_a^3/3 + t_1 w_a^2 - \tilde{\lambda}_a w_a}}{e^{z_a^3/3 + t_1 z_a^2 - \tilde{\lambda}_a z_a }}  ,
\end{split}
\end{equation}
and $\tilde{\lambda}_a = \lambda_a - t_1^2 \cdot {\bf 1}\{ t_1 \leq 0\}$. We mention that in deriving the formula for $H_k$ we used the definition of $K_{\infty}(x,s; y,t) $ from (\ref{QW2}) and the multilinearity of the determinant function.

We next seek to exchange the order of the integrals in $H_k $. We observe that we can find a constant $A \in (0, \infty)$, depending on $y_1, t_1$, such that 
\begin{equation}\label{SF2}
\begin{split}
& \int_{C_{1, \pi/4}}|dw| \int_{C_{0, 3\pi/4}} |dz|  |2w| \cdot \left|\frac{e^{w^3/3 + t_1 w^2 }}{e^{z^3/3 + t_1 z^2  }} \right| \cdot e^{(t_1^2 + |y_1|)(|z| + |w|)} \leq A,
\end{split}
\end{equation}
where $|dz|$, $|dw|$ denote integration with respect to arc length. The latter follows from the cubic terms in the exponential functions.

Note that for $z \in C_{0,3\pi/4}$, $w \in C_{1,\pi/4}$ we have $|z \pm w| \geq 1$. The latter and Hadamard's inequality from Lemma \ref{DetBounds} imply
\begin{equation}\label{SF3}
\left| \det \left[ \frac{-2w_a}{z_b^2 - w_a^2} \right]_{a,b = 1}^k \right| \leq k^{k/2} \cdot \prod_{a = 1}^k |2w_a|.
\end{equation}
In addition, since $\Re(w_a - z_a) \geq 1$, we have 
\begin{equation}\label{SF4}
\int_{y_1}^\infty \left| e^{\tilde{\lambda}_a(z_a - w_a)} \right| d\lambda_a = \frac{\exp\left( t_1^2 \cdot {\bf 1}\{ t_1 \leq 0\} \cdot \Re(w_a - z_a) + y_1 \cdot \Re(z_a - w_a) \right) }{\Re(w_a - z_a)}.
\end{equation}
Combining (\ref{SF3}) and (\ref{SF4}) with the fact that $\Re(w_a - z_a) \geq 1$, we conclude that 
\begin{equation}\label{SF5}
\begin{split}
&\int_{(y_1,\infty)^k} d \vec{\lambda}  \int_{C_{1, \pi/4}^k} \left|d\vec{w} \right|   \int_{C_{0, 3\pi/4}^k}  \left| d\vec{z} \right| \left| \det \left[ \frac{-2w_a}{z_b^2 - w_a^2} \right]_{a,b = 1}^k \prod_{a = 1}^k \frac{e^{w_a^3/3 + t_1 w_a^2 - \tilde{\lambda}_a w_a}}{e^{z_a^3/3 + t_1 z_a^2 - \tilde{\lambda}_a z_a }} \right| \\
& \leq k^{k/2} \cdot \int_{C_{1, \pi/4}^k} \left|d\vec{w} \right|  \int_{C_{0, 3\pi/4}^k}\left| d\vec{z} \right| \prod_{a = 1}^k |2w_a| \cdot \left|\frac{e^{w_a^3/3 + t_1 w_a^2 }}{e^{z_a^3/3 + t_1 z_a^2  }} \right| \cdot e^{(t_1^2 + |y_1|)(|z_a| + |w_a|)}  \leq A^k < \infty,
\end{split}
\end{equation}
where $|d\vec{z}| = |dz_1| \cdots |dz_k|$, $|d\vec{w}| = |dw_1| \cdots |dw_k|$. 

One consequence of (\ref{SF5}) is that 
\begin{equation}\label{SF6}
\begin{split}
|H_k| \leq \frac{A^k k^{k/2}}{(2\pi)^{2k}k!},
\end{split}
\end{equation}
which is summable over $k$. Another consequence of (\ref{SF5}) is that we can exchange the order of the integrals in $H_k $ without affecting the value of the integral by Fubini's theorem. The result is 
\begin{equation}\label{SF7}
\begin{split}
&H_k  = \frac{(-1)^k}{(2\pi \i)^{2k} k!}    \int_{C_{1, \pi/4}^k}  d\vec{w} \int_{C_{0, 3\pi/4}^k} d\vec{z}  \int_{(y_1,\infty)^k} d \vec{\lambda} \det \left[ \frac{-2w_a}{z_b^2 - w_a^2} \right]_{a,b = 1}^k \prod_{a = 1}^k \frac{e^{w_a^3/3 + t_1 w_a^2 - \tilde{\lambda}_a w_a}}{e^{z_a^3/3 + t_1 z_a^2 - \tilde{\lambda}_a z_a }}   \\
& = \frac{1}{(2\pi \i)^{2k} k!}  \int_{C_{1, \pi/4}^k}d\vec{w} \int_{C_{0, 3\pi/4}^k}d\vec{z} \det \left[ \frac{2w_a e^{w_a^3/3 + t_1 w_a^2 +{\bf 1}\{t_1 \leq 0\}  t_1^2 w_a -y_1 w_a  }}{( z_a^2 - w_b^2)(w_a - z_a) e^{z_a^3/3 + t_1 z_a^2 +{\bf 1}\{t_1 \leq 0\}  t_1^2 z_a  - y_1 z_a}} \right]_{a,b = 1}^k  ,
\end{split}
\end{equation}
where we used the multilinearity of the determinant, the fact that the determinant of a matrix is equal to that of its transpose, and that for $\Re(w_a - z_a) \geq 1$ we have
$$\int_{y_1}^\infty e^{\tilde{\lambda}_a (z_a - w_a) } d\lambda_a = \exp \left( t_1^2 {\bf 1}\{ t_1 \leq 0\} (w_a - z_a) + y_1 (z_a - w_a) \right) \cdot \frac{1}{w_a -z_a}.$$
Equations (\ref{SF1}) and (\ref{SF7}) establish (\ref{CrossFD2}), and so we conclude the third part of the lemma.\\

{\bf \raggedleft Step 3.} In this step we prove the second part of the lemma. We first show that for each  $t_1 \in \mathbb{R}$ the function $G_{t_1}^{2 \rightarrow 1}$ from (\ref{OR1}) is continuous. Let $y_N \in \mathbb{R}$ for $N \in \mathbb{N} \cup\{\infty\}$ be such that $\lim_{N \rightarrow \infty} y_N = y_\infty$. We seek to show that 
\begin{equation}\label{BB1}
\lim_{N \rightarrow \infty} G_{t_1}^{2 \rightarrow 1}(y_N) = G_{t_1}^{2 \rightarrow 1}(y_{\infty}).
\end{equation}
We define for $w_1, w_2 \in C_{1, \pi/4}$, $z \in C_{0, 3\pi/4}$ and $N \in \mathbb{N} \cup\{\infty\}$
\begin{equation*}
g_{w_1,w_2}^N(z) = \frac{2w_1 e^{w_1^3/3 + t_1 w_1^2 +{\bf 1}\{t_1 \leq 0\}  t_1^2 w_1 -y_N w_1  }}{( z^2 -  w_2^2)(w_1 - z) e^{z^3/3 + t_1 z^2 +{\bf 1}\{t_1 \leq 0\}  t_1^2 z  - y_N z}}.
\end{equation*}
Let $R > 0$ be sufficiently large so that $|y_N| \leq R$ for all $N \in \mathbb{N} \cup\{\infty\}$. We observe that the functions $g_{w_1,w_2}^N$ satisfy the conditions of \cite[Lemma 2.3]{ED18} with $\Gamma_1 = C_{1, \pi/4}$, $\Gamma_2 = C_{0, 3\pi/4}$ and functions
$$F_1(w) = \left|2w e^{w^3/3 + t_1 w^2 +{\bf 1}\{t_1 \leq 0\}  t_1^2 w  } \right| \cdot e^{|R| |w|}, \hspace{2mm } F_2(z) = \left| e^{-z^3/3 - t_1 z^2 - {\bf 1}\{t_1 \leq 0\}  t_1^2 z} \right| \cdot e^{|R||z|}.$$
From \cite[Lemmas 2.2 and 2.3]{ED18} we conclude that 
\begin{equation}\label{BB2}
\lim_{N \rightarrow \infty} \det (I + K^N)_{L^2(C_{1, \pi/4})} = \det (I + K^\infty)_{L^2(C_{1, \pi/4})},\mbox{ where } K^N(w_1,w_2) = \int_{C_{0, 3\pi/4}}\hspace{-5mm} g_{w_1,w_2}^N(z)  dz.
\end{equation}
From (\ref{CrossFD2}), which we proved in Step 2 above, and (\ref{OR1}) we see that 
$$\det (I + K^N)_{L^2(C_{1, \pi/4})} = G_{t_1}^{2 \rightarrow 1}(y_N) \mbox{ for all $N \in\mathbb{N} \cup\{\infty\}$,}$$
which in view of (\ref{BB2}) proves (\ref{BB1}).\\

Let us fix $m \in \mathbb{N}$, $t_1 < \cdots < t_m$ and define for $\vec{y} = (y_1, \dots, y_m) \in \mathbb{R}^m$ the function
\begin{equation}\label{BB3}
f_m(\vec{y}; \vec{t}) = f_m(y_1, \dots, y_m; t_1, \dots, t_m) :=\det \left( I - \chi_y K_{\infty} \chi_{y} \right)_{L^2(\{t_1, \dots, t_m\}\times \mathbb{R})}.
\end{equation}
In the remainder of this step we prove that, for fixed $\vec{t}$, $f_m(\cdot ;  \vec{t})$ is a continuous function on $\mathbb{R}^m$. Let $\vec{y}^N \in \mathbb{R}^m$ for $N \in \mathbb{N} \cup\{\infty\}$ be such that $\lim_{N \rightarrow \infty} \vec{y}^N = \vec{y}^\infty$. We seek to show that 
\begin{equation}\label{BB4}
\lim_{N \rightarrow \infty} f_m(\vec{y}^N; \vec{t}) =   f_m(\vec{y}^\infty;  \vec{t}).
\end{equation}
In principle, it is possible to use the Fredholm determinant expansion formula in the right side of (\ref{BB3}) and repeat our work from Step 2 and the first part of Step 3 to prove (\ref{BB4}). As the computations are quite involved, we instead deduce the continuity of $f_m$ from the continuity of $f_1$ (i.e. $G_{t_1}^{2 \rightarrow 1}$), which we showed in (\ref{BB1}) and the fact that the $f_m$ arise as limits of distribution functions of the process $X_t$ in \cite{BFS08}. We provide the details below.\\

Let $X_t$ be the process, defined in \cite[Equation (2.5)]{BFS08}. From the work in \cite[Section 4]{BFS08} we have that for each $m \in \mathbb{N}$, $t_1 < t_2 < \cdots < t_m$  and $y_1, \dots, y_m \in \mathbb{R}$
\begin{equation}\label{BB5}
\lim_{n \rightarrow \infty} \mathbb{P} \left( \cap_{k = 1}^m \{ X_n(t_k) \leq y_k \} \right) = f_m(\vec{y};  \vec{t}).
\end{equation}
Equation (\ref{BB5}) shows that $f_m(\vec{y}; \vec{t}) \in [0,1]$ for each $\vec{y} \in \mathbb{R}^m$, and for fixed $\vec{t}$ is non-decreasing in the $y$ variables. Let us define the sequences $\vec{x}^N, \vec{X}^N \in \mathbb{R}^m$ through
$$\vec{x}^N = \left(\min(y_1^N, y_1^\infty), \dots, \min(y_m^N, y_m^\infty) \right), \mbox{ and }\vec{X}^N = \left(\max(y_1^N, y_1^\infty), \dots, \max(y_m^N, y_m^\infty) \right).$$
As $f_m$ is non-decreasing, we see that for each $N \in \mathbb{N}$
\begin{equation}\label{BB6}
\left|f_m(\vec{y}^N; \vec{t})  - f_m(\vec{y}^\infty; \vec{t}) \right| \leq f(\vec{X}^N; \vec{t}) - f(\vec{x}^N; \vec{t}).
\end{equation}
In addition, by subadditivity we have for each $N \in \mathbb{N} \cup \{\infty\}$
$$ \mathbb{P} \left( \cap_{k = 1}^m \{  X_n(t_k) \leq X^N_k \} \right) - \mathbb{P} \left( \cap_{k = 1}^m \{  X_n(t_k) \leq x^N_k \} \right) \leq \sum_{k = 1}^m \mathbb{P} \left(   x^N_k< X_n(t_k) \leq X^N_k \right).$$
Taking the limit $n \rightarrow \infty$ in the last inequality, using (\ref{BB5}), we get
\begin{equation}\label{BB7}
 f(\vec{X}^N; \vec{t}) - f(\vec{x}^N; \vec{t}) \leq \sum_{k = 1}^m [f_1(X_k^N; t_k) - f_1(x_k^N; t_k)].
\end{equation}
Since $f_1$ is continuous from (\ref{BB1}), and $\lim_{N \rightarrow \infty} \vec{X}^N = \lim_{N \rightarrow \infty} \vec{x}^N = \vec{y}^\infty$, we may conclude (\ref{BB4}) by combining (\ref{BB6}) and (\ref{BB7}).\\

{\bf \raggedleft Step 4.} In this final step we prove the first part of the lemma. From our work in Steps 1 and 3, we know that for each $t_1 \in \mathbb{R}$, the function $f_1(\cdot; t_1)$ as in (\ref{BB3}) is a continuous DF on $\mathbb{R}$. This and (\ref{BB5}) show that $X_n(t_1)$ weakly converge to a random variable with DF $f_1(\cdot; t_1)$ as $n \rightarrow \infty$. In particular, we conclude that $X_n(t_1)$ is a tight sequence of random variables. The latter implies that for all $m \in \mathbb{N}$ and $t_1 < t_2 < \cdots < t_m$, we have $(X_n(t_1), \dots, X_n(t_m))$ is a tight sequence of random vectors in $\mathbb{R}^m$. The tightness of $(X_n(t_1), \dots, X_n(t_m))$, equation (\ref{BB5}) and the continuity of $f_m(\cdot; \vec{t})$ together imply that $f_m(\cdot; \vec{t})$ is a continuous DF on $\mathbb{R}^m$ and $(X_n(t_1), \dots, X_n(t_m))$ converge weakly to a random vector in $\mathbb{R}^m$ with DF $f_m(\cdot; \vec{t})$ as $n \rightarrow \infty$.

For $x \in \mathbb{R}$, we let $\mathbb{R}_x$ denote a copy of $\mathbb{R}$ and endow it with the Borel $\sigma$-algebra $\mathcal{B}(\mathbb{R}_x)$. If $I \subset \mathbb{R}$ is a finite set, and $I = \{t_1, \dots, t_m\}$, with $t_1 < \cdots < t_m$, we let $S_I = \times_{x \in I} \mathbb{R}_i$ and endow $S_I$ with the product $\sigma$-algebra $\otimes_{x \in I} \mathcal{B}(\mathbb{R}_x)$. We also let $\mu_I$ denote the unique measure on $\left(\times_{x \in I} \mathbb{R}_x, \otimes_{x \in I} \mathcal{B}(\mathbb{R}_x) \right)$, whose DF is given by 
$$\mu_I\left( \prod_{i = 1}^m (-\infty, y_i]_{t_i} \right) = f_m(y_1, \dots, y_m; \vec{t}).$$
Here, we have placed the subscript $t_i$ to indicate that $(-\infty, y_i]_{t_i} \subset \mathbb{R}_{t_i}$. Since (\ref{BB5}) holds for each $m \in \mathbb{N}$, $t_1 < t_2 < \cdots < t_m$  and $y_1, \dots, y_m \in \mathbb{R}$, we conclude that for finite sets $I \subseteq J \subset \mathbb{R}$, we have
$$\mu_J(\cdot \times S_{J \setminus I}) = \mu_I.$$
From the Kolmogorov existence theorem, see \cite[Theorem 5.16]{Ka}, we conclude that there is a probability space $(\Omega, \mathcal{F}, \mathbb{P})$ and a real-valued process $\{ \mathcal{A}_{2 \rightarrow 1}(t): t \in \mathbb{R}\}$ on that space, whose finite-dimensional distribution functions are given by $f_m$. This proves the first part of the lemma.

\bibliographystyle{alpha}
\bibliography{PD}

\end{document}